\setlist[description]{style=multiline,leftmargin=2.3em,topsep=3mm,itemsep=0mm}
\setlist[itemize]{style=standard,leftmargin=2em,topsep=3mm,parsep=0mm,itemsep=0mm}
\def\[{\begin{equation}}
\def\]{\end{equation}}
\def\cK{{\mathcal K}}
\def\cC{{\mathcal C}}
\def\cD{{\mathcal D}}
\def\cL{{\mathcal L}}
\def\cT{{\mathcal T}}
\def\cH{{\mathcal H}}
\def\cU{{\mathcal U}}
\def\cG{{\mathcal G}}
\def\cQ{{\mathcal Q}}
\def\cM{{\mathcal M}}
\def\cW{{\mathcal W}}
\def\cS{{\mathcal S}}
\def\cE{{\mathcal E}}
\def\cF{{\mathcal F}}
\def\cQ{{\mathcal Q}}
\def\cJ{{\mathcal J}}
\def\cX{{\mathcal X}}
\def\cY{{\mathcal Y}}
\def\cZ{{\mathcal Z}}
\def\cN{{\mathcal N}}
\def\cP{{\mathcal P}}
\def\Pii{{\Pi}}
\def\ri{{\sf ri}}
\def \inti {{\sf int}}
\def\gph{{\sf gph}}
\def\dist{{\sf dist}}
\def\what{ \widehat }
\def\Re{{\mathds R}}
\def\prox{{\sf prox}}
\def\spa{{\sf span}}
\def\lin{{\sf {lin}}}
\def\range{{\sf {rge}}}
\def\ker{{\sf {ker}}}
\def\max {{\sf max}}
\def\min {{\sf min}}
\DeclareMathOperator*{\argmin}{arg\,min}
\newcommand{\longsetto}[1]{\mathop{\longrightarrow}\limits^{#1}}
\renewcommand{\Re}{{\mathds R}}
\newlength{\myparboxwidth}\setlength{\myparboxwidth}{\textwidth}\addtolength{\myparboxwidth}{-2cm}
\newtheorem{theorem}{Theorem}[section]
\newtheorem{lemma}{Lemma}[section]
\newtheorem{assumption}{Assumption}[section]
\newtheorem{proposition}{Proposition}[section]
\newtheorem{corollary}{Corollary}[section]
\newtheorem{remark}{Remark}[section]
\newtheorem{definition}{Definition}[section]
\title{\Large \bf 
Two Typical Implementable Semismooth* Newton Methods for Generalized Equations are G-Semismooth Newton Methods\thanks{The research of the first author is supported by
the National Key R\&D Program of China [No. 2021YFA1001300],
the National Natural Science Foundation of China [No. 12271150],
the Natural Science Foundation of Hunan Province [No. 2023JJ10001],
the Science and Technology Innovation Program of Hunan Province [No. 2022RC1190],
and
the research of the second author is supported by
the Hong Kong RGC Senior Research Fellow Scheme [No. SRFS2223-5S02] and the GRF Grants [Nos. 15307822 and 15307523].}}
\author{Liang Chen\thanks{School of Mathematics, Hunan University, Changsha, China (\href{mailto:chl@hnu.edu.cn}{chl@hnu.edu.cn}).}
\and
Defeng Sun\thanks{Department of Applied Mathematics, The Hong Kong Polytechnic University, Hung Hom, Hong Kong (\href{mailto:defeng.sun@polyu.edu.hk}{defeng.sun@polyu.edu.hk}).}
\and
Wangyongquan Zhang\thanks{Department of Applied Mathematics, The Hong Kong Polytechnic University, Hung Hom, Hong Kong (\href{mailto:wangyongquan.zhang@connect.polyu.hk}{wangyongquan.zhang@connect.polyu.hk}). }}
\date{\today}
\begin{document}
\maketitle

\begin{abstract}
Semismooth* Newton methods have been proposed in recent years targeting multi-valued inclusion problems and have been successfully implemented to deal with several concrete generalized equations.
In this paper, we show that two typical implementations of them that are available are exactly the applications of G-semismooth Newton methods for solving nonsmooth equations localized from these generalized equations.
This new understanding expands the breadth of G-semismooth Newton methods in theory,
 results in a few interesting problems regarding the two categories of nonsmooth Newton methods, and more importantly,
 provides informative observations in facilitating
 the design and implementation of practical Newton-type algorithms for solving generalized equations.

\bigskip\noindent
{\bf Keywords}:
semismooth* Newton method;
semismooth Newton method;
generalized equation;
nonsmooth analysis;
variational analysis

\bigskip\noindent
{\bf MSC2000 subject classification}: {49J52; 49J53; 90C31; 90C33; 49M15}
\end{abstract}

\section{Introduction} 
Starting from the seminal work of Kummer \cite{Kummer1},
Newton-type methods for solving nonsmooth equations have evolved for several decades.
The literature on this topic is abundant; one may refer to
\cite{Pang1,Kummer2,qisun,Robinson,Hoheisel12,Hoheiselerr,Izmailov15}
and references therein.
Nowadays, nonsmooth Newton methods have been heavily incorporated into efficient numerical optimization software for large-scale optimization problems \cite{sdpnal,sdpnalp,suitlasso}.
For generalized equations (GEs), nonsmooth Newton methods have also been extensively studied.
In the pioneering works \cite{Josephy1,Josephy2} of this field, Josephy considered the GEs in the form of
\begin{equation*}
0\in H(x)+\Theta(x),
\end{equation*}
where
$H:\cX \to \cY$ is a { single-valued} function,
$\Theta:\cX \rightrightarrows \cY$ is a multifunction, and $\cX$ and $\cY$ are finite-dimensional real Hilbert spaces each endowed with an inner product and its induced norm.
Studies in this direction include
\cite{Bonans94,Dontchev961,Dontchev96,Fischer99,solodov02,Geoffroy04,Geoffroy05,IS10,Dontchev10,cibulka11,gaydu13,aa2014,IS14,Aragon15,Izmailov15, Ferreira15,Ferreira17,Ferreira18,Oliveira19}, to name only a few.
In these algorithms, the single-valued part $H$ is linearized while the multi-valued part $\Theta$ is kept.
So, the subproblems are linearized generalized equations.
Alternatively, the constructions of Newton-type methods for nonsmooth GEs by approximating the multi-valued part $\Theta$ have been investigated in \cite{Aze95,Dias12,Hoheisel12,gaydu13,KK18}. 
 
Significant progress has also been made in Newton-type methods with subproblems being coderivative (or graphical derivative) inclusions \cite{mordukhovich2021generalized,mordukhovich2024second,Khanh1,Khanh2,Khanh3,Khanh4,Aragon24}.

Recently, a semismooth* Newton method was proposed in \cite{Gfrerer2021}
for solving GEs in the form of 
\begin{equation}
\label{EqGE}
 0\in \Phi(x),
\end{equation}
where $\Phi:\cX \rightrightarrows \cY$ is a set-valued mapping with closed graph, and was extended in \cite{gfrerer2022local} using subspace containing derivatives (SCD). 
A particular property of these semismooth* Newton methods is that the subproblems involved are linear systems of equations, which is different from the methods mentioned above for solving (\ref{EqGE}). 
Moreover, 
since an inequality involving the target solution should be properly fulfilled in the ``approximation step'' at each iteration of these algorithms, 
specific implementations of these algorithms should be elaborately designed to make them practical.
 
These implementations are rarely available in the literature.
However, the two executable representatives of them, even seemingly sophisticated, are of practical value. 
 
Specifically, on the one hand, the semismooth* Newton method in \cite{Gfrerer2021} for \eqref{EqGE}
was implemented in \cite[Section 5]{Gfrerer2021} for solving the GE
\[
\label{ssnge}
0\in F(x)+\nabla G(x) N_D(G(x)),
\]
where $F:\Re^n\to\Re^n$ is continuously differentiable, $G:\Re^n\to\Re^s$ is twice continuously differentiable,
$D\subseteq \Re^s$ is a convex polyhedral set,
$\nabla G(x)$ denotes the adjoint of the Jacobian operator $G'(x)$,
and $N_D(\cdot)$ denotes the normal cone mapping used in convex analysis.
As mentioned in \cite{Gfrerer2021}, the GE \eqref{ssnge} arises frequently in optimization and equilibrium models, and it is equivalent to the GE given by $0\in F(x)+ N_{G^{-1}(D)}(x)$ under certain constraint qualifications \cite[Proposition 2.1]{calmref}. 
On the other hand, in \cite{Gfrerer222}, the SCD semismooth* Newton method was applied to the GE
\[
\label{gecp}
0\in F(x)+\partial q(x),
\]
where $\partial q$ is the subdifferential mapping of a given closed proper convex function $q:\Re^n\to(-\infty,\infty]$. 
Such an implementation was further extended in \cite{gf23} to a more general class of GEs.

The GEs \eqref{ssnge} and \eqref{gecp} are of significant importance for taking a closer look at these semismooth* Newton methods.
Since the corresponding subproblems for computing the Newton directions are linear systems of equations, which are akin to the subproblems of semismooth Newton methods,
it is natural to ask whether these implementations admit a connection to the existing Newton-type methods for solving nonsmooth equations.
Note that, when $\Phi$ in \eqref{EqGE} is single-valued and locally Lipschitz continuous around a point $\bar x$, it is easy to see that the semismooth* property of $\Phi$ at $\bar x$ is exactly G-semismoothness (semismoothness in the sense of Gowda \cite{Gowda2004}) of $\Phi$ at $\bar x$, { for example, by} \cite[Proposition 3.7]{Gfrerer2021}.
Moreover, when solving a locally Lipschitz continuous equation, the relationship between the semismooth* Newton method and the G-semismooth Newton method of Kummer \cite{Kummer1} (c.f. \Cref{sec:gss} for details) has been discussed in \cite[Sect. 4]{Gfrerer2021}.
For solving GEs beyond nonsmooth equations,
the relationship between the two types of nonsmooth Newton methods is unknown.

In this paper, by reformulating the GEs \eqref{ssnge} and \eqref{gecp} as nonsmooth equations, which are proved to be locally Lipschitz continuous,
we show that the corresponding practical implementations of semismooth* Newton methods
are exactly the applications of G-semismooth Newton methods.
Specifically, we show that the algorithm implemented in \cite[Section 5]{Gfrerer2021} for solving \eqref{ssnge} is an application of a G-semismooth Newton method for solving an implicitly defined equation.
Furthermore, for the algorithm in \cite{Gfrerer222} for solving \eqref{gecp}, we take the proximal residual mapping as the Lipschitz continuous localization of \eqref{gecp}, and show that the implemented SCD semismooth* Newton method is also an application of a G-semismooth Newton method.
Additionally, we show that the conditions for ensuring the convergence of these semismooth* Newton methods are sufficient for the corresponding applications of the G-semismooth Newton methods.
Therefore, one can conclude that these implementable semismooth* Newton methods are G-semismooth Newton methods. 
This leads to a concrete foundation for comprehending semismooth* Newton methods and is beneficial for developing practical nonsmooth Newton methods for solving GEs, especially considering globalization.
 
Here, we emphasize that we focus on the local convergence properties. 
For globalizing the G-semismooth Newton method, one may refer to the (inexact) smoothing Newton methods studied in \cite{QiSunZhou2000,GaoSun2009} (note that though semismoothness was used in the cited two references, a quick examination reveals that G-semismoothness is sufficient for convergence and rate of convergence analysis). 
In addition, we only consider GEs or nonsmooth equations instead of $\cC^{1,1}$ optimization problems, for which traditional globalized G-semismooth Newton methods generally require the directional differentiability of the gradient mapping.
This requirement can be removed by involving the Lipschitz constant of the gradient mapping and a modulus for local stability in the line search as in the recent coderivative-based nonsmooth Newton methods for $\cC^{1,1}$ optimization problems or composite optimization problems with $\cC^{1,1}$ envelopes \cite{mordukhovich2024second,Khanh2,Khanh4}. 
Most recently, the issue of globalizing the semismooth* Newton method for nonconvex composite optimization problems has also been discussed \cite{gfrerer2025}.

The remaining parts of this paper are organized as follows.
In \Cref{sectpre}, we collect some basic results in variational analysis, and briefly introduce the G-semismooth Newton method. 
 
In \Cref{secimplementation}, the implementable semismooth* Newton methods in \cite[Section 5]{Gfrerer2021} and \cite{Gfrerer222} are introduced, together with some intermediate results which are necessary for further discussions.
In \Cref{secinv} and \Cref{secscd}, we show that these executable implementations of semismooth* Newton methods are applications of G-semismooth Newton methods for solving nonsmooth equations involving locally Lipschitz continuous functions. This constitutes the main contribution of this work.
We conclude our paper in \Cref{secconlu}.
 
\section{Preliminaries}
\label{sectpre}
This section presents the definitions and necessary tools from variational analysis \cite{rock1998,mordukhovich2006variational,mordukhovich2024second}. 
It also provides preliminary results and reviews the G-semismooth Newton method.

\subsection{Basic variational analysis}
Let $\cX$ and $\cY$ be two finite-dimensional real Hilbert spaces each equipped with an inner product $\langle\cdot,\cdot\rangle$ and its induced norm $\|\cdot\|$.
For any $x\in\cX$ and $\delta>0$,
${\mathbb B}_\delta(x)$ denotes the closed ball centered at $x$
with radius $\delta$, and ${\mathbb B}_\cX$ and ${\mathbb B}_\cY$ are the unit balls in $\cX$ and $\cY$, respectively.
Moreover, $[x]$ denotes the subspace spanned by the given vector $x\in\cX$.
The notation $(\cdot;\cdot )$ means two vectors or linear operators are stacked symbolically in column order.
For a subspace $\cX_0$ of $\cX$, we use $\cX_0^\perp$ to denote its orthogonal complement in $\cX$.
We use ${\mathbb L}(\cX,\cY)$ to represent the space of all linear operators from $\cX$ to $\cY$, and write ${\mathbb L}(\cX)\equiv {\mathbb L}(\cX,\cX)$ for convenience.
For an arbitrary linear operator $V$, we use $\range V$ to denote its range space, $\ker V$ to denote its null space.
If V is a matrix, we use $V^\top$ to denote its transpose.

For a nonempty set $C\subseteq \cX$, we use
$\ri C$ and $\inti C$ to denote the relative interior and interior of $C$, respectively.
The lineality space of $C$, denoted by $\lin C$, is the largest linear subspace contained in $C$.
Meanwhile, we use $\spa C$ to denote the smallest linear subspace that contains $C$.
When $C$ is locally closed at $\bar{x} \in C$, the {contingent (Bouligand) cone} $T_{C}(\bar{x})$, the {regular (Fr\'{e}chet) normal cone} $\widehat{N}_{C}(\bar{x})$ and 
the limiting (Mordukhovich) normal cone $N_{C}(\bar{x})$ to $C$ at $\bar{x}$ are defined, respectively, by
$$
\begin{array}{ll}
T_{C}(\bar{x}):=\limsup\limits_{t {\searrow} 0} \frac{C-\bar{x}}{t}, 
\quad
\widehat{N}_{C}(\bar{x}):=(T_{C}(\bar{x}))^{\circ},
\quad \mbox{and}\quad 
N_{C}(\bar{x}):=\limsup\limits_{{x \rightarrow \bar{x}}, x\in C} \widehat{N}_{C}(x).
\end{array}
$$ 
Furthermore, 
$\cK_C(\bar{x},\, d):=T_C(\bar{x})\cap[d]^\perp$ is the critical cone to $C$ at $\bar x\in C$ with respect to 
$d\in\what{N}_{C}(\bar{x})$. 

If $K\subseteq\cX$ is a closed convex cone, we use $K^\circ$ to denote its polar, i.e.,
$K^\circ:=\{x\in\cX \mid \langle x,x'\rangle \le 0\, \forall x'\in K\}$.
In this case, one has $\lin K=K\cap(-K)$ and $\spa K=K+(-K)$.
Moreover, it holds
$(\lin K)^{\perp}=\spa K^{\circ}$ and
$(\spa K)^{\perp}=\lin K^{\circ}$.

For a set-valued mapping $\Phi: \cX \rightrightarrows \cY$,
we use $\gph \Phi$ to denote its graph in $\cX\times\cY$.
The mapping $\Phi$ is called outer semicontinuous at $x$
if for any $\varepsilon>0$ there exists $\delta>0$ such that $\Phi(x')\subseteq\Phi(x)+\varepsilon {\mathbb B}_\cY$ holds for all $x'\in x+\delta {\mathbb B}_\cX$.
When $\gph\Phi$ is (locally) closed, the {regular (Fr\'echet) coderivative} and the {limiting (Mordukhovich) coderivative} of $\Phi$ at $(\bar{x},\bar{y} )$ are the multifunctions $\widehat D^\ast \Phi(\bar{x},\bar{y} ): \cY\rightrightarrows\cX$
and $D^\ast \Phi(\bar{x},\bar{y} ): \cY\rightrightarrows\cX$, respectively, such that 
\begin{equation*}
\begin{cases}
\widehat D^\ast \Phi(\bar{x},\bar{y} )(v^\ast):=\{u^\ast\in \cX \mid (u^\ast ;- v^\ast)\in \widehat
N_{\gph \Phi}(\bar{x} ;\bar{y} )\}\quad \forall v^\ast\in \cY,
\\
D^\ast \Phi(\bar{x},\bar{y} )(v^\ast):=\{u^\ast\in \cX \mid (u^\ast ;- v^\ast)\in N_{\gph \Phi}(\bar{x} ;\bar{y} )\}\quad\forall
v^\ast\in \cY.
\end{cases}
\end{equation*}
If $\Phi$ is single-valued, one can write the two coderivatives as $\widehat D^*\Phi(\bar{x})$ and
$D^*\Phi(\bar{x})$ for simplicity.
If $\Phi$ is Fr\'echet differentiable at $\bar{x}$, by \cite[Theorem 1.38]{mordukhovich2006variational} one has
$\widehat D^*\Phi(\bar{x})(v^*)=\{\nabla \Phi(\bar{x}) v^*\}$, where $\nabla \Phi(\bar{x})$ is the adjoint of the Fr\'echet derivative $\Phi'(\bar x)$.
If $\Phi$ is strictly differentiable at $\bar{x}$, one also has $D^*\Phi(\bar{x})(v^*)=\{\nabla \Phi(\bar{x}) v^*\}$.
If $\Phi$ is Lipschitz continuous in an open neighborhood $\Omega$ of $\bar{x}$, from
Rademacher's theorem \cite{Rademacher} we know $\Phi$ is almost everywhere Fr\'echet differentiable in $\Omega$.
In this case,
the Bouligand subdifferential of $\Phi(\cdot)$ at $\bar x$ is defined by
\[
\label{bouligand}
\partial_{B}\Phi(\bar x):=
\big\{\lim_{k\to\infty}
\Phi'(x^{(k)})
\mid \Phi\text{ is differentiable at }
x^{(k)},\, x^{(k)}\rightarrow \bar x
\big\}
\]
and Clarke's generalized Jacobian of $\Phi$ at $\bar x$ is defined by $\partial \Phi(\bar x):={\rm conv}\partial_B\Phi(\bar x)$, i.e., by taking the convex hull of the Bouligand subdifferential.

In \cite{Gfrerer2021}, a generalization of the coderivatives was introduced. 
Specifically,
for $\Phi:\cX\rightrightarrows\cY$ with closed graph,
one can let
$\widehat\cD^* \Phi:\gph \Phi\to (\cY\rightrightarrows\cX)$ be a mapping such that for every pair $(x ;y)$ in $\gph \Phi$, the set $\gph \widehat\cD^* \Phi(x,y)$ is a cone. One can define the associated limiting mapping
$\cD^* \Phi:\gph \Phi\to (\cY\rightrightarrows\cX)$
by
$$
\begin{array}{l}
 \gph \cD^* \Phi(x,y):
=\limsup\limits_{(x' ;y')\longsetto{\gph \Phi}(x ;y)}\gph\widehat\cD^* \Phi(x',y').
\end{array}
$$
Here, $\widehat\cD^*$ and $\cD^*$ serve as the generalizations of the regular and limiting coderivatives $\widehat D^*$ and $D^*$, respectively. 
In \cite{Gfrerer2021}, the notion of semismoothness* was originally proposed for sets and is equivalent to the semismoothness of sets in \cite[Definition 2.3]{Henrion}.
The following definition of generalized semismoothness* comes from its application and generalization to $\gph\Phi$.

\begin{definition}[{\cite[Definition 4.8]{Gfrerer2021}}]
\label{defsst}
Let $\Phi:\cX \rightrightarrows\cY$ and $(\bar{x} ;\bar{y})\in\gph \Phi$, which is nonempty and closed.
Then, $\Phi$ is called semismooth* at
$(\bar{x},\bar{y})$ with respect to $ {\cD}^* \Phi$ if for every $\epsilon>0$ there is some $\delta>0$ such that
$$
\vert \langle x^*,x-\bar{x}\rangle-\langle y^*,y-\bar{y}\rangle\vert\leq \epsilon
\|(x;y)-(\bar{x};\bar{y})\|\|(x^*;y^*)\|\quad 
\forall(x;y)\in \mathbb{B}_\delta(\bar{x};\bar{y}), \forall
(y^*;x^*)\in\gph\widehat\cD^*\Phi(x,y).
$$
\end{definition}

Finally, we discuss proximal mappings and projections. 
For a maximal monotone mapping $\cM:\cX\rightrightarrows\cX$, the corresponding proximal mapping is defined by
$
\prox_{\lambda \cM}:=(I+\lambda \cM)^{-1}$, $\lambda>0$,
which is single-valued and Lipschitz continuous with unit Lipschitz constant, where $I$ represents the identity operator.
Given a closed proper convex function $q: \cX \to (-\infty,+\infty]$ and a parameter $\lambda >0$, its subdifferential mapping $\partial q$ is always maximal monotone. Taking $\cM\equiv \partial q$, it is easy to see that
\[
\label{proxdef}
\prox_{\lambda \cM}(x)=\cP_{\lambda q}(x):=\argmin_z\Big\{q(z)+\frac{1}{2\lambda}\|z-x\|^2\Big\} \quad \forall x\in \cX.
\]
Therefore, for a nonempty closed convex subset $C\subseteq \cX$ with $\delta_C$ being its indicator function, the projection mapping (with respect to $\|\cdot\|$) can be defined by
$\Pii_C(x):=\cP_{\lambda\delta_C}(x)$ for any $\lambda>0$.
The following lemma on the Bouligand subdifferential of the projection mapping onto a convex polyhedral set is necessary for subsequent discussions.
\begin{lemma}
\label{partialbpiq}
Let $\cQ\subset\Re^l$ be a nonempty polyhedral convex set.
Then for a given $\mu\in \Re^l$,
\begin{equation*}\partial_B\Pii_\cQ(\mu)
=\big\{
\Pii_{\spa \cG}(\cdot)
\mid
\cG \mbox{ is a face of } K(\mu):
=\mathcal{K}_\cQ(\Pii_\cQ(\mu),\, \mu-\Pii_\cQ(\mu))
\big\}.
\end{equation*}
\end{lemma}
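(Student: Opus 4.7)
The plan is to exploit two classical facts about projection onto a polyhedral convex set: the piecewise affineness of $\Pii_\cQ$, and an exact local reduction of $\Pii_\cQ$ to projection onto the critical cone. The central ingredient is a polyhedral Haraux-type formula: since $\cQ\subseteq\Re^l$ is polyhedral convex, there exists $\delta>0$ such that
\[
\Pii_\cQ(\mu+h)=\Pii_\cQ(\mu)+\Pii_{K(\mu)}(h)\qquad\text{for every }h\in\Re^l\text{ with }\|h\|<\delta.
\]
This upgrades the standard directional-derivative identity $\Pii_\cQ'(\mu;\cdot)=\Pii_{K(\mu)}(\cdot)$ to a genuine local equality, since on a small enough ball only one ``piece'' of the piecewise-affine map $\Pii_\cQ$ around $\mu$ is active, and that piece is governed by $K(\mu)$. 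As a direct consequence, $\partial_B\Pii_\cQ(\mu)=\partial_B\Pii_{K(\mu)}(0)$, which I would adopt as the working identity for the remainder.

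Next, I would analyze the normal manifold of the polyhedral cone $K:=K(\mu)$. The collection $\{\ri \cG+N_K(\cG)\}_{\cG}$, indexed by the faces $\cG$ of $K$, partitions $\Re^l$ into finitely many convex polyhedral cones, each of dimension $l$. On the interior of such a cell, $\Pii_K$ is affine and coincides with the orthogonal projection onto $\spa\cG$: for $x\in\ri\cG$ and $n\in N_K(\cG)$ one has $\Pii_K(x+n)=x\in\spa\cG$, and by polyhedrality $N_K(\cG)\perp\spa\cG$. Because every face of the convex cone $K$ contains the origin, $0$ lies in the closure of every normal cell, so one can pick differentiability points $\mu^{(k)}\to\mu$ with $\mu^{(k)}-\mu$ inside any preselected open cell; passing to the limit yields the inclusion $\{\Pii_{\spa\cG}\mid \cG \text{ is a face of } K(\mu)\}\subseteq\partial_B\Pii_\cQ(\mu)$.

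For the reverse inclusion, any $V\in\partial_B\Pii_\cQ(\mu)$ arises as the limit of Jacobians $\Pii_\cQ'(\mu^{(k)})$ along a sequence of differentiability points $\mu^{(k)}\to\mu$. By the local reduction, for large $k$ these Jacobians equal those of $\Pii_K$ at $\mu^{(k)}-\mu$; by the partition above, each equals $\Pii_{\spa \cG_k}$ for the unique face $\cG_k$ whose open normal cell contains $\mu^{(k)}-\mu$. Finiteness of the face lattice of $K$ permits extracting a constant subsequence $\cG_k\equiv\cG$, so $V=\Pii_{\spa\cG}$ for a face $\cG$ of $K(\mu)$.

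The main obstacle is to rigorously establish the exact local reduction in the first step. The directional-derivative identity is classical, but upgrading it from ``along every ray at $\mu$'' to ``on a full ball around $\mu$'' genuinely relies on polyhedrality: on a sufficiently small ball, both $h\mapsto\Pii_\cQ(\mu+h)-\Pii_\cQ(\mu)$ and $h\mapsto\Pii_{K(\mu)}(h)$ are continuous piecewise affine maps whose pieces are determined by the cone directions out of $\mu$, so agreement along every ray forces agreement on a neighborhood. I would either argue this directly from the piecewise-affine structure using the finitely many pieces through $\mu$, or invoke Robinson's description of the normal manifold of a polyhedral convex set.
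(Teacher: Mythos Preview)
Your proposal is correct and follows the same skeleton as the paper's proof: both start from the exact local reduction $\Pii_\cQ(\mu+h)=\Pii_\cQ(\mu)+\Pii_{K(\mu)}(h)$ for small $h$ (the paper simply cites \cite[Lemma~5(i)]{Pang1} for this, so your ``main obstacle'' is already a known result you can quote rather than re-derive), and both then reduce the problem to computing $\partial_B\Pii_{K(\mu)}(0)$.

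The two proofs diverge only in how they analyze $\Pii_{K(\mu)}$. You use Robinson's normal manifold: the cells $\ri\cG+N_K(\cG)$ partition $\Re^l$, on each cell $\Pii_K=\Pii_{\spa\cG}$ because $N_K(\cG)\subseteq(\spa\cG)^\perp$, and every cell is a cone touching~$0$, which gives both inclusions at once. The paper instead invokes \cite[Lemma~5(ii)]{Pang1} to write $\Pii_\cQ'(\mu+\Delta\mu)=\Pii_{\lin K(\mu+\Delta\mu)}$ at differentiability points, and then appeals to \cite[Lemma~2.4]{Gfrerer2021} to identify $\lin K(\mu+\Delta\mu)$ with $\spa\cG$ for some face $\cG$ of $K(\mu)$; for the reverse inclusion it explicitly constructs, for a given face $\cG=K(\mu)\cap[\nu]^\perp$, the sequence $(\tilde\mu+\nu)/k\to 0$ with $\tilde\mu\in\ri\cG$ and shows via Moreau decomposition that $\Pii_{K(\mu)}'((\tilde\mu+\nu)/k)=\Pii_{\spa\cG}$. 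Your normal-fan argument is more self-contained and geometric, avoiding the auxiliary Lemma~2.4 of \cite{Gfrerer2021}; the paper's version is more citation-driven but makes the construction of approximating differentiability points completely explicit. Substantively the two arguments are the same geometry viewed through different lemmas.
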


\begin{proof}{Proof}
We know from \cite[Lemma 5(i)]{Pang1} and the definition of
$K(\mu)$
that for any $\Delta\mu\in \Re^l$
with $\|\Delta\mu\|$ being sufficiently small, it holds that
\[
\label{projeq}
\Pii_\cQ(\mu+\Delta\mu)= \Pii_\cQ(\mu)+
\Pii_{K(\mu)}(\Delta\mu).
\]
Moreover, if $\Pii_\cQ$ is differentiable at $\mu+\Delta\mu$, one has from \cite[Lemma 5(ii)]{Pang1} that
\begin{equation*}
\Pii_\cQ ' (\mu+\Delta\mu)=\Pii_{\lin K(\mu+\Delta\mu)} (\cdot)= \Pii_{\lin T_\cQ(\Pii_{\cQ}(\mu+\Delta\mu))} (\cdot),
\end{equation*}
where the last equality comes from (2.3) of \cite[Lemma 2.4]{Gfrerer2021}.
Note that $\lim\limits_{\Delta\mu\to 0}\Pii_{K(\mu)}(\Delta\mu)=0$.
Therefore, from \cite[Lemma 2.4]{Gfrerer2021} we know that for every $\Delta\mu$ with $\|\Delta\mu\|$ sufficiently small one has
${\lin} K(\mu+\Delta\mu)=\spa\cG$ with $\cG$ being a face of $K(\mu)$.
Consequently, one has
$\Pii_\cQ ' (\mu+\Delta\mu)=\Pii_{\spa \cG}(\cdot)$, so that
\begin{equation}
\label{partialin}
\partial_B\Pii_\cQ( \mu)\subseteq\{
\Pii_{\spa \cG}(\cdot)
\mid
\cG \mbox{ is a face of } K(\mu)
\}.
\end{equation}
On the other hand, let $\cG$ be an arbitrary face of $K(\mu)$.
It holds $\cG=K(\mu)\cap[\nu]^\perp$ for some $\nu\in K(\mu)^\circ$.
Since $\cG$ is a closed convex set, one has from \cite[Theorem 6.2]{rock1970} that ${\ri}\cG$ is nonempty.
Let $\tilde\mu\in {\ri}\cG\subseteq K(\mu)$ be fixed.
it holds that $T_\cG(\tilde\mu)={\spa}\cG$. 
Moreover, from Moreau's decomposition theorem \cite[Theorem 31.5]{rock1970} one can get
$\Pii_{K(\mu)}(\tilde\mu+\nu)=\tilde\mu$.
Then, by \cite[Lemma 5.1(i)]{Pang1} we know that for all $\Delta\mu\in\Re^l$ with $\|\Delta\mu\|$ sufficiently small, it holds that
$\Pii_{K(\mu)}(\tilde\mu+\nu+\Delta\mu)
=\tilde\mu+\Pii_{
T_{K(\mu)}(\tilde\mu)\cap [\nu]^{\perp}
}(\Delta\mu)
=\tilde\mu+\Pii_{{\spa}\cG}(\Delta\mu)$. 
Thus, $\Pii_{K(\mu)}(\cdot)$ is differentiable at $\tilde\mu+\nu$
with $\Pii'_{{K(\mu)}} (\tilde \mu+\nu)=\Pii_{{\spa}\cG}(\cdot)$.
Note that for any integer $k>0$, one has $\cG=K(\mu)\cap[\nu/k]^\perp$.
Meanwhile, as $\cG$ is a closed convex cone, one has $\tilde\mu/k\in{\ri}\cG$. Therefore,
$\Pii_{K(\mu)}(\cdot)$ is differentiable at $(\tilde\mu+\nu)/k$ for all $k>0$
with $\Pii_{K(\mu)}'(\tilde\mu/k+\nu/k)=\Pii_{{\spa}\cG}(\cdot)$.
Consequently, by \eqref{projeq} one has
\begin{equation*}
\Pii_\cQ'(\mu+( \tilde \mu/k+\nu/k))
=\Pii_{{K(\mu)}}'(\tilde \mu/k+\nu/k)=\Pii_{{\spa}\cG}(\cdot).
\end{equation*}
Taking limits in the above equality along with $k\to\infty$, one gets $\Pii_{{\spa}\cG}(\cdot)\in\partial_B \Pii_\cQ(\mu)$.
This, together with \eqref{partialin}, completes the proof of the lemma.
\end{proof}

\subsection{A G-semismooth Newton method}
\label{sec:gss}
 
The terminology ``G-semismoothness'' was coined in \cite{pang2003} for distinguishing the definition of semismoothness in \cite{Gowda2004} by Gowda from those in \cite{mifflin,qisun} involving directional differentiability.
Specifically, let $\Omega\subseteq\cX$ be an open set,
$H:\Omega\to\cY$ be a continuous function,
and $\cT:\Omega \rightrightarrows {\mathbb L}(\cX,\cY)$ be a set-valued mapping.
According to \cite[Defintion 2]{Gowda2004}, we say $H$ is called G-semismooth with respect to $\cT$ at $ x\in \Omega$ if for any $h\to 0$ and $V\in\cT( x+h)$ it holds that $H(x+h)-H( x)-V h=o(\|h\|)$.
When $H$ is locally Lipschitz continuous around ${x}$,
it is simply called G-semismooth at ${x}$ if $\cT$ is taken as $\partial H$, 
and this definition is invariant if $\partial H$ is replaced by $\partial_BH$ (cf. \cite[Section 2.3]{Gowda2004}).
The following G-semismooth Newton method, based on G-semismoothness, is a trivial inexact extension of \cite{Kummer1}.

\SetKwFunction{Union}{Union}\SetKwFunction{FindCompress}{FindCompress}
\SetKwInOut{Input}{Input}\SetKwInOut{Output}{Output}
\begin{algorithm}[H]
\caption{A G-semismooth Newton method for solving nonsmooth equations \label{alg:nm}}
\normalsize
\Input{$x^{(0)}\in\cX$, $ H:\Omega\subseteq\cX\to\cY$, $\cT: \cX\rightrightarrows {\mathbb L}(\cX,\cY)$, and $\varrho\ge 0$.}
\Output{$\{x^{(k)}\}$.}
\For{$k=0,1, \ldots$,}{
1. if $ H(x^{(k)})=0$, stop the algorithm;
\\[1mm]
2. select
$V_k\in{\mathbb L(\cX,\cY)}$ such that ${\dist}(V_k,\cT(x^{(k)}))\le \varrho \| H(x^{(k)})\|$,
compute $\Delta x^{(k)}$ via solving
$V_{k}\Delta x=- H(x^{(k)})$,
and obtain $x^{(k+1)}:= x^{(k)}+\Delta x^{(k)}$.
}
\end{algorithm} 

\begin{theorem}
\label{thmgss}
Let $\Omega\subseteq\cX$ be an open set.
Suppose that $ H:\Omega\to\cX$ is locally Lipschitz continuous (with modulus $\vartheta>0$) and G-semismooth with respect to
$\cT:\Omega\rightrightarrows{\mathbb L}(\cX,\cY)$ at $\bar x$ such that $ H(\bar x)=0$.
Assume that $\cT(\cdot)$ is compact valued and outer semicontinuous at $\bar x$,
and $V^{-1}$ exists for all $V\in \cT(\bar x)$.
Then there exists a neighborhood of $\bar x$ such that, for any $x^{(0)}$ in it, Algorithm \ref{alg:nm} either terminates in finitely many steps or generates an infinite sequence $\{x^{(k)}\}$ satisfying
$\|x^{(k+1)}-\bar x\| = o(\|x^{(k)}-\bar x\|)$ as $k\to\infty$.
\end{theorem}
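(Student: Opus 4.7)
The proof plan is to follow the classical Newton-type argument that already appears for Kummer's method, adapted to absorb the inexactness tolerance $\varrho\|H(x^{(k)})\|$ in the choice of $V_k$. The skeleton is: (i) uniform invertibility of candidate $V_k$ in a neighborhood of $\bar x$, (ii) a semismoothness-plus-inexactness residual estimate at the current iterate, (iii) combining (i)--(ii) to read off $\|x^{(k+1)}-\bar x\|=o(\|x^{(k)}-\bar x\|)$, and (iv) checking the iteration is well defined on some ball $\mathbb B_\delta(\bar x)$ so an induction closes the argument.

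First I would establish uniform invertibility. Since $\cT(\bar x)$ is compact and every $V\in\cT(\bar x)$ is invertible, the map $V\mapsto \|V^{-1}\|$ is continuous on $\cT(\bar x)$ and hence bounded by some $M>0$; by a standard Neumann-series argument there is $\eta>0$ such that every $W\in\mathds L(\cX,\cY)$ with $\dist(W,\cT(\bar x))\le \eta$ is invertible with $\|W^{-1}\|\le 2M$. Using upper semicontinuity of $\cT$ at $\bar x$ together with continuity of $H$ (so $\|H(x)\|\to 0$ as $x\to\bar x$), one can shrink to a ball $\mathbb B_{\delta_0}(\bar x)$ on which any $V_k$ produced by the algorithm satisfies $\dist(V_k,\cT(\bar x))\le \eta$, hence $V_k$ is invertible with $\|V_k^{-1}\|\le 2M$.

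Next I would produce the key residual bound. Fix $x^{(k)}\in\mathbb B_{\delta_0}(\bar x)$, set $h_k:=x^{(k)}-\bar x$, and pick $\widetilde V_k\in\cT(x^{(k)})$ realizing the distance, so that $\|V_k-\widetilde V_k\|\le \varrho\|H(x^{(k)})\|$. By G-semismoothness of $H$ with respect to $\cT$ at $\bar x$ (applied to the increment $h_k$ and the element $\widetilde V_k\in\cT(\bar x+h_k)$) one has
\[
H(x^{(k)})-\widetilde V_k h_k = H(\bar x+h_k)-H(\bar x)-\widetilde V_k h_k = o(\|h_k\|).
\]
Local Lipschitzness of $H$ with modulus $\vartheta$ gives $\|H(x^{(k)})\|=\|H(x^{(k)})-H(\bar x)\|\le \vartheta\|h_k\|$, whence $\|V_k-\widetilde V_k\|\cdot\|h_k\|\le \varrho\vartheta\|h_k\|^2=o(\|h_k\|)$. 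Combining,
\[
\|H(x^{(k)})-V_k h_k\|\le \|H(x^{(k)})-\widetilde V_k h_k\|+\|V_k-\widetilde V_k\|\,\|h_k\|=o(\|h_k\|).
\]
Using $x^{(k+1)}-\bar x=h_k+\Delta x^{(k)}=h_k-V_k^{-1}H(x^{(k)})=V_k^{-1}(V_k h_k-H(x^{(k)}))$ and the uniform bound $\|V_k^{-1}\|\le 2M$ yields
\[
\|x^{(k+1)}-\bar x\|\le 2M\,\|H(x^{(k)})-V_k h_k\|=o(\|h_k\|).
\]

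Finally, a shrinking-$\delta$ argument closes the induction: choose $\delta\in(0,\delta_0]$ so small that the $o(\|h_k\|)$ term is bounded by, say, $\tfrac12\|h_k\|$ whenever $\|h_k\|\le\delta$; this guarantees $x^{(k+1)}\in\mathbb B_\delta(\bar x)$ and the step in (ii)--(iii) can be repeated, producing either finite termination (when $H(x^{(k)})=0$) or an infinite sequence with the claimed superlinear rate. The one point that needs care is the inexactness bound: the factor $\varrho\|H(x^{(k)})\|$ must be shown to contribute only to the higher-order term, which works precisely because $H(\bar x)=0$ and $H$ is Lipschitz, so $\|H(x^{(k)})\|=O(\|h_k\|)$ and the product $\varrho\vartheta\|h_k\|^2$ is absorbed into $o(\|h_k\|)$; this is the step I expect to be the main (and only nontrivial) technical obstacle.
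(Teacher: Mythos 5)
Your proof is correct, and it takes a genuinely different route from the paper. The paper's proof is citation-based: it observes that the inexactness tolerance makes the inflated multifunction $\cT(x)+\vartheta\varrho\|x-\bar x\|\,{\mathds B}_{{\mathds L}(\cX,\cY)}$ a \emph{Newton map} for $H$ at $\bar x$ in the sense of Klatte and Kummer, checks that compactness, upper semicontinuity, and nonsingularity of $\cT(\bar x)$ give the \emph{Newton-regularity} condition, and then concludes by invoking the general convergence theorem \cite[Theorem 4]{KK18} (or \cite[Lemma 10.1]{KK2}). You instead give a direct, self-contained argument: (i) a Neumann-series plus compactness plus upper-semicontinuity argument yields a uniform bound $\|V_k^{-1}\|\le 2M$ on some ball; (ii) G-semismoothness gives $H(x^{(k)})-\widetilde V_k h_k = o(\|h_k\|)$ for the nearest $\widetilde V_k\in\cT(x^{(k)})$, and the inexactness is absorbed because $\|V_k-\widetilde V_k\|\,\|h_k\|\le\varrho\vartheta\|h_k\|^2$; (iii) the identity $x^{(k+1)}-\bar x = V_k^{-1}(V_k h_k - H(x^{(k)}))$ yields the superlinear estimate; (iv) a shrinking-ball induction keeps the iterates inside the region where (i)--(iii) apply. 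Both proofs establish the same key facts; the paper's version is shorter but outsources the Newton-iteration bookkeeping to the cited framework, whereas yours makes the whole mechanism visible, which is arguably more informative for a reader who does not have Klatte--Kummer's definitions at hand. One small point worth noting: you implicitly use that $\cT(x^{(k)})$ is closed (so the nearest point $\widetilde V_k$ exists); this is harmless since $\cT$ is assumed compact valued, and in any case an $\epsilon$-approximate minimizer would suffice.
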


\begin{proof}{Proof}
Note that 
$\| H(x)\|\le \vartheta\|x-\bar x\|$ for all $x$ sufficiently close to $\bar x$.
Meanwhile, the G-semismoothness of $ H$ at $\bar x$ implies that the multifunction $\cT(x)+ \vartheta\varrho\|x-\bar x\| {\mathbb B}_{{\mathbb L}(\cX,\cY)}$ is a Newton map (c.f. \cite{Kumer2000} or \cite[Definition 2]{KK18} for the definition) for $H$ at $\bar x$.
As $\cT(\cdot)$ is compact valued and outer semicontinuous at $\bar x$ and all $V\in\cT(\bar x)$ are nonsingular, the Newton-regularity condition \cite[Definition 3]{KK18} holds at $\bar x$.
So the convergence properties of Algorithm \ref{alg:nm} follow from \cite[Theorem 4]{KK18} (or \cite[Lemma 10.1]{KK2}).
 \end{proof}

\section{Implementable semismooth* Newton methods for GEs}
\label{secimplementation}
This section reviews the two typical semismooth* Newton methods which are implementable to concrete GEs. 

\subsection{A semismooth* Newton method for the GE \texorpdfstring{\eqref{ssnge}}{(2)}}
In \cite[Section 5]{Gfrerer2021}, the semismooth* Newton method \cite[Algorithm 3]{Gfrerer2021} for solving the GE \eqref{EqGE} was implemented to \eqref{ssnge} by introducing an auxiliary variable $d\in\Re^s$ and solving the equivalent problem
\[
\label{cge-equiv}
0\in \cH(x,d):=\begin{pmatrix}
F(x)+\nabla G(x) N_D(d)
\\
G(x)-d
\end{pmatrix}.
\]
Note that $\bar x$ solves \eqref{ssnge} if and only if $(\bar x, \bar d)=(\bar x, G(\bar x))$ solves \eqref{cge-equiv}.
For convenience, define the Lagrangian function
\begin{equation*}
\mathcal{L}_\lambda(x):=F(x)+\nabla G(x)\lambda
\quad
\forall (x;\lambda )\in\Re^n\times\Re^s.
\end{equation*}
For a given point $\hat z:=((\hat x, \hat d) ;(\hat p^*, G (\hat x)-\hat d))\in\gph \cH$, one can choose $\hat \lambda\in N_D(\hat d)$ such that $\hat p^*=\cL_{\hat \lambda}(\hat x)$.
Moreover, one can define for all $(p ;q^*)\in \Re^n\times\Re^s$ the mapping 
\[
\label{deftmap}
{\mathbb T} (\hat x,\hat d,\hat \lambda)(p,q^*):
=\big\{
(\nabla \cL_{\hat\lambda}(\hat x)p
+\nabla G(\hat x)q^*,d^*)
\mid
d^*+q^*\in \widehat D^*N_D(\hat d,\hat \lambda)(G'(\hat x)p)
\big\}.
\]
In this case, according to \cite[eq. (5.5)]{Gfrerer2021}, the regular coderivative of $\cH$ at $\hat z$ satisfies
\[
\label{codt}
\widehat D^* \cH\left(\hat z\right)(p,q^*)\subseteq
{\mathbb T}(\hat x, \hat d,\hat \lambda)(p,q^*).
\]
For implementing the semismooth* Newton method \cite[Algorithm 3]{Gfrerer2021} to \eqref{cge-equiv}, a mapping $\widehat{\cD}^* \cH$ that surrogates $\widehat{D}^* \cH$ has been specified based on \eqref{deftmap}.
Furthermore, by defining 
${\cD}^*\cH$ as the outer limit of
$\widehat{\cD}^* \cH$, it is know from \cite[Theorem 5.5]{Gfrerer2021} that 
the mapping $\cH$ in \eqref{cge-equiv} is semismooth* with respect to $\cD^*\cH$ at every point ${( (x, G( x));(0,0))}\in \gph \cH$. 
Then, the ``approximation step'' in \cite{Gfrerer2021} was given as the following algorithm.

\begin{algorithm}[H]
\caption{{An approximation step} \label{algo:approx}}
\Input{$x\in\Re^n$. }
\Output{$\hat u\in\Re^n$, $\hat x\in\Re^n$, $\hat\lambda\in\Re^s$, $\hat d\in\Re^s$, $\hat{p}^* \in\Re^n.$}
 1. compute
\[
\label{qp}
\hat u=\argmin_{u\in\Re^n}\Big\{\frac{1}{2}\|u\|^2+\langle F(x),u\rangle
\ {\big |}\
G(x)+G'(x)u\in D\Big\}
\]
together with a multiplier
$\hat\lambda \in N_D(G(x)+ G'(x)\hat u)$
satisfying
$\hat u + F(x) + \nabla G(x)\hat\lambda
=\hat u +\cL_{\hat\lambda}(x)=0$;
\\[1mm]
2. set
$\hat x :=x$,
$\hat d : =G(x)+G'(x)\hat u$,
$\hat{p}^*:=\mathcal{L}_{\hat{\lambda}}(\hat{x})$, and
$\hat{y}:=\big(\hat{p}^*, G(\hat{x})-\hat{d}\big)$.
\end{algorithm}
The semismooth* Newton method in \cite[Section 5]{Gfrerer2021} for solving \eqref{ssnge} is given as Algorithm \ref{algo:snm_GE}.

\begin{algorithm}[H]
\caption{An implementable semismooth$^*$ Newton method for solving \eqref{ssnge} \label{algo:snm_GE}}
\Input {$F:\Re^n\to\Re^n$,\ $G:\Re^n\to\Re^s$ and $x^{(0)}\in\Re^n.$}
\Output{$\{x^{(k)}\}.$}
\For{$k=0,1, \ldots$,}{
1. if $x^{(k)}$ solves \eqref{ssnge}, stop the algorithm;
\\[1mm]
2. run Algorithm \ref{algo:approx} with input $x^{(k)}$ to compute $\hat{\lambda}^{(k)},\hat{d}^{(k)}$ and $\mathcal{L}_{\hat{\lambda}^{(k)}}(\hat x^{(k)})$;
\\[1mm]
3. set ${\hat{l}}^{(k)}=\dim(\operatorname*{lin}T_{D}(\hat{d}^{(k)}))$ and compute an $s\times(s-\hat{l}^{(k)})$ matrix $\hat{W}^{(k)}$, whose
columns form a basis for $\spa N_{D}(\hat{d}^{(k)})$ and then an $n\times(n-(s-\hat{l}^{(k)}))$ matrix $\hat{Z}^{(k)}$, whose columns are an orthogonal basis for $\ker {(\hat{W}^{(k)^\top}} G'(x^{(k)}))$;
\\[1mm]
4. set $x^{(k+1)}:=x^{(k)}+\Delta {x}^{(k)}$ with the Newton direction $\Delta x^{(k)}$ being a solution to the linear system
\begin{equation}
\left\{
\label{iteration_scheme_snm_GE}
 \begin{array}{l}
\hat{Z}^{{(k)}^\top}\bigl(\mathcal{L}'_{\hat{\lambda}^{(k)}}(x^{(k)})\Delta x^{(k)}+\mathcal{L}_{\hat{\lambda}^{(k)}}(x^{(k)})\bigr)=0,
 \\
 \hat{W}^{(k)^\top }\bigl(G(x^{(k)})+G'(x^{(k)})\Delta x^{(k)}-\hat{d}^{(k)}\bigr)=0.
 \end{array}
 \right.
\end{equation}
}
\end{algorithm}
Recall that a point $(x,d)$ is called nondegenerate with modulus $\gamma>0$ to the GE \eqref{ssnge} if one has
$\|\nabla G(x)\lambda\|\ge\gamma\|\lambda\|$ for all $\lambda\in {\spa} N_D(d)$.
It is called nondegenerate if the above condition holds with some $\gamma>0$.
The following assumption {\cite[Assumption 1]{Gfrerer2021}} is essential for Algorithm \ref{algo:approx}.
\begin{assumption}
\label{as1}
$(\bar x,G(\bar x))$ is a nondegenerate solution to \eqref{cge-equiv} with modulus $\tilde\gamma>0$.
\end{assumption}

\begin{remark} \label{equi_form_nondegenerate}
The point $(x,d)$ is called nondegenerate if and only if
$G'( x)\Re^n+\lin T_D( d)=\Re^s$.
Moreover, from \cite[Remark 5.3 and Lemma 5.4]{Gfrerer2021} it holds
that for any $\hat z:=((\hat x, \hat d) ;(\hat p^*,G(\hat x)-\hat d))\in\gph \cH$ with $(\hat x,\hat d)$ being nondegenerate,
one has \eqref{codt} holds as equality and there exists only one $\lambda\in N_D(\hat d)$, denoted by $\hat\lambda(\hat x,\hat d,\hat p^*)$, such that $\hat p^*=F(\hat x)+\nabla G(\hat x)\lambda$.
\end{remark}

Then one has the following result (there is a typo in \cite[eq. (5.13)]{Gfrerer2021} we take the revised form).
\begin{proposition}[{\cite[Proposition 5.7]{Gfrerer2021}}]
\label{propkey}
Under Assumption \ref{as1}, there exists a positive radius $\omega$ and positive reals $\beta,\beta_u$, and $\beta_\lambda$ such that for all $x\in \mathbb{B}_{\omega}(\bar x)$, the quadratic program in \eqref{qp} is well-defined and admits a unique solution $\hat u$, and the output of Algorithm \ref{algo:approx} satisfies
\begin{equation*}
\begin{array}{ll}
\|\hat u\|\le\beta_u\|\hat x-\bar x\|,
\quad
\|((\hat x;\hat d);\hat y)-((\bar x; G(\bar x));(0;0))\|\le\beta\|\hat x-\bar x\|,
\ \mbox{and}\ 
\|\hat \lambda -\bar \lambda \|\le\beta_\lambda\|\hat x-\bar x\|,
\end{array}
\end{equation*}
 where $\bar \lambda$ is the unique multiplier for $\bar x$. 
 
Further, $(\hat{x},\hat{d})$ is nondegenerate with modulus $\tilde\gamma/2$ and $N_{D}(\hat{d})\subseteq N_{D}(G(\bar{x}))$.
\end{proposition}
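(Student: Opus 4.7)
The plan is to first solve the KKT system of the quadratic program \eqref{qp} at $x=\bar x$ by inspection, then transfer existence, uniqueness, and Lipschitzian dependence on $x$ to a neighborhood of $\bar x$ via a Robinson-type strong-regularity argument, and finally read off the remaining estimates from the KKT identity and a polyhedrality argument on normal cones.

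First I would verify that at $x=\bar x$ the pair $(u,\lambda)=(0,\bar\lambda)$ is the unique KKT point of \eqref{qp}: feasibility is immediate because $G(\bar x)\in D$ and $\bar\lambda\in N_D(G(\bar x))$; the stationarity equation $F(\bar x)+\nabla G(\bar x)\bar\lambda=0$ holds since $\bar x$ solves \eqref{ssnge}; and uniqueness of $\bar\lambda$ follows from \Cref{as1} via \Cref{equi_form_nondegenerate}. For $x$ sufficiently near $\bar x$, the nondegeneracy relation $G'(\bar x)\Re^n+\lin T_D(G(\bar x))=\Re^s$ is precisely Robinson's constraint qualification for the linearized inclusion $G(x)+G'(x)u\in D$, so the feasible set is nonempty and varies metrically regularly; strict convexity of the quadratic objective then yields the unique minimizer $\hat u$.

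Next, I would rewrite the KKT conditions of \eqref{qp} as the parametric generalized equation
\[
0\in \begin{pmatrix} u+F(x)+\nabla G(x)\lambda \\ G(x)+G'(x)u \end{pmatrix}+\begin{pmatrix} 0 \\ -N_D^{-1}(\lambda) \end{pmatrix}
\]
in the unknown $(u,\lambda)$, with $x$ as parameter. The diagonal block acting on $u$ is the identity, hence strictly positive definite, and nondegeneracy at $(\bar x,G(\bar x))$ is exactly the polyhedral-LICQ condition ensuring that the linearization at $(0,\bar\lambda)$ is strongly regular. By Robinson's implicit function theorem the solution mapping $x\mapsto(\hat u(x),\hat\lambda(x))$ is then single-valued and Lipschitz continuous near $\bar x$, which furnishes constants $\beta_u,\beta_\lambda>0$ with $\|\hat u\|\le\beta_u\|x-\bar x\|$ and $\|\hat\lambda-\bar\lambda\|\le\beta_\lambda\|x-\bar x\|$.

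The remaining estimates are then direct. The KKT identity gives $\hat p^*=\mathcal{L}_{\hat\lambda}(\hat x)=-\hat u$, so $\|\hat p^*\|$ inherits the $\hat u$-bound; since $\hat x=x$, one has $G(\hat x)-\hat d=-G'(x)\hat u$ and $\hat d-G(\bar x)=\bigl(G(x)-G(\bar x)\bigr)+G'(x)\hat u$, both of order $\|x-\bar x\|$ by the smoothness of $G$ and the bound on $\hat u$; assembling these pieces yields the constant $\beta$ in the statement. Because $D$ is polyhedral, outer semicontinuity of its normal cone provides $N_D(\hat d)\subseteq N_D(G(\bar x))$ whenever $\hat d\in D$ is close enough to $G(\bar x)$, hence $\spa N_D(\hat d)\subseteq\spa N_D(G(\bar x))$; combining $\|\nabla G(\bar x)\lambda\|\ge\tilde\gamma\|\lambda\|$ on this subspace with $\|\nabla G(\hat x)-\nabla G(\bar x)\|\le\tilde\gamma/2$, valid for $\hat x$ near $\bar x$ by continuity of $G'$, delivers the modulus $\tilde\gamma/2$ at $(\hat x,\hat d)$. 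The principal obstacle is the second step: extracting strong regularity of the KKT generalized equation from bare nondegeneracy, because the normal-cone term is only piecewise rather than smooth; the standard workaround is to linearize $N_D$ through critical-cone projections (cf. \Cref{partialbpiq}) and invoke the classical equivalence between nondegeneracy and strong regularity for parametric QPs over a polyhedron, after which everything else is Taylor expansion and polyhedral bookkeeping.
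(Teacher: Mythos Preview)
The paper does not prove this proposition itself; it is quoted from \cite[Proposition~5.7]{Gfrerer2021}, and the only addition is the Remark that the original argument invokes \cite[Theorem~6.2]{calmref} (a second-order calmness criterion for parametric constraint systems), which is the sole reason the twice continuous differentiability of $G'$ enters as a hypothesis. Your route via Robinson strong regularity is therefore a genuinely different argument from the cited one.

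Your approach is essentially correct: the QP \eqref{qp} has identity Hessian, so the strong second-order sufficient condition is automatic, and together with nondegeneracy of $(\bar x,G(\bar x))$---which for polyhedral $D$ is exactly LICQ---the KKT system is strongly regular, yielding a locally single-valued Lipschitz solution--multiplier map $x\mapsto(\hat u,\hat\lambda)$. This in fact delivers more than the proposition states (Lipschitz continuity on a ball rather than mere isolated calmness at $\bar x$; the paper establishes Lipschitz continuity separately in Proposition~\ref{Lipschitz_of_S} via cone reduction and \cite{meng}), and it does so \emph{without} the extra smoothness hypothesis on $G'$ that the Remark flags as indispensable for the cited proof. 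The derivation of $\beta$, the normal-cone inclusion $N_D(\hat d)\subseteq N_D(G(\bar x))$ from polyhedral outer semicontinuity, and the modulus-$\tilde\gamma/2$ estimate are all fine.

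The one place to firm up is your final paragraph: you describe the passage from nondegeneracy to strong regularity as a ``standard workaround'' via critical-cone projections. That step is the hinge of the whole argument, so rather than leave it as folklore you should give a precise reference---e.g., the polyhedral VI stability theory in \cite{Perturbation} or Robinson's original strong-regularity characterization for affine variational inequalities---otherwise the proof is formally incomplete at exactly the point you identify as the principal obstacle.
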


The following assumption {\cite[Assumption 2]{Gfrerer2021}} provides a regularity condition that guarantees that the linear system \eqref{iteration_scheme_snm_GE} admits a unique solution (there is a typo in \cite{Gfrerer2021} and we use the corrected form here).

\begin{assumption}
\label{ass_2}
For any face $\cF$ of the critical cone
$\cK_D(G(\bar x),\bar \lambda)$ there is a matrix $Z_\cF$, whose columns form an orthogonal basis of $\{u\mid G'(\bar x )u\in {\spa} \cF\}$, such that the matrix $Z_\cF^\top\mathcal{L}'_{\bar\lambda}(\bar x)Z_\cF$ is nonsingular.
\end{assumption}

According to \cite[Theorem 5.12]{Gfrerer2021}, 
under Assumptions \ref{as1} and \ref{ass_2},
there exists a neighborhood $U$ of $\bar x$ such that for every starting point $x^{(0)} \in U$, Algorithm \ref{algo:snm_GE} either stops after finitely many iterations at a solution of \eqref{ssnge} or
produces a sequence $\{x^{(k)}\}$ converging superlinearly to $\bar x$.

\subsection{An SCD semismooth* Newton method for the GE \texorpdfstring{\eqref{gecp}}{(3)}}
In \cite{Gfrerer222}, 
the SCD semismooth* Newton method proposed in \cite{gfrerer2022local} 
for \eqref{EqGE} (with $\cX\equiv \cY$) was implemented to the GE \eqref{gecp}.
As was observed in \cite{gfrerer2022local}, when applying the semismooth* Newton methods to \eqref{gecp},
it is advantageous to work with linear subspaces $L\subseteq\cX\times\cX$ having the same dimension $n$ with $\cX$ and contained in the graph of the limiting coderivative at a certain point $(x ;y)\in\gph \Phi$, i.e.,
$L\subseteq \gph D^* \Phi(x,y)$.
Specifically, denote by $\mathbb{Z}_{n}$ the metric space of all n-dimensional subspaces of
$\cX\times\cX$ equipped with the metric $d_{\mathbb{Z}_{n}}(L_1,L_2):=\|\Pii_{L_1}-\Pii_{L_2}\|$,
where $\Pii_{L_{i}}$ is the projection operator on $L_{i}$, $ i = 1, 2$. Further, define
$L^*:= \{(-v^*;u^*) \,| \, (u^*;v^*) \in L^\perp\}$ for any $L\in \mathbb{Z}_{n}$.
According to \cite[Definition 3.3]{gfrerer2022local}, 
$\Phi:\cX \rightrightarrows \cX$ with closed graph is called graphically smooth of dimension $n$ at $(x,y)$, if $(x;y)\in \gph \Phi$ and $T_{\gph\Phi}(x ;y)\in \mathbb{Z}_n$.
Denote by $\mathbb{O}_\Phi$ the set of all points where $\Phi$ is graphically smooth of dimension $n$.
Then one can define for $\Phi$ the following set-valued mappings (from $\gph \Phi$ to $\mathbb{Z}_n$):
\begin{equation*}
\begin{array}{ll}
\widehat{\cS}^*_\Phi(x,y)
:=\begin{cases}\{(T_{\gph\Phi}(x ;y))^*\}& \mbox{if $(x,y)\in{\mathbb{O}}_\Phi$,}\\
 \emptyset&\mbox{else,}\end{cases}
&
\qquad
{\cS}^*_\Phi(x,y):=\limsup\limits_{(u ;v)\longsetto{{\gph \Phi}}(x ;y)} \widehat{\cS}^*_\Phi(u,v).
\end{array}
\end{equation*}
The following definition of SCD property also comes from \cite[Definition 3.3]{gfrerer2022local}.
\begin{definition}
\label{DefSCDProperty}
$\Phi$ is said to have the SCD property at $(x,y)$ if $(x;y)\in\gph \Phi$ and ${\cS}^*_\Phi(x,y)\not=\emptyset$.
It has the SCD property around $(x,y)$ if $(x;y)\in\gph \Phi$ and there is a neighborhood $\cN$ of $(x;y)$ such that $\Phi$ has the SCD property at every $(x';y')\in\gph \Phi\cap \cN$.
It is called an SCD mapping if $\Phi$ has the SCD property at every point $(x,y)$ such that $(x;y)\in \gph \Phi$.
\end{definition}
The following definition of SCD regularity was given in \cite[Definition 4.1]{gfrerer2022local}.
\begin{definition}
Define $\mathbb{Z}_n^{\rm reg}:=
\{L\in\mathbb{Z}_n\mid
(y^*;0)\in L\Rightarrow y^*=0\}$.
A mapping $\Phi:\cX \rightrightarrows \cX$ is called SCD regular around {$(x,y)$ if $(x;y)\in\gph \Phi$, }$\Phi$ has the SCD property around $(x,y)$
and
$\cS^*_\Phi(x,y)\subseteq \mathbb{Z}_n^{\rm reg}$.
Moreover, the modulus of SCD regularity of $\Phi$ around $(x,y)$ is defined by
\begin{equation*}
{\rm scd\,reg\;}\Phi(x,y):=\sup\{\|y^*\|{\,\mid\,} (y^*;x^*)\in L, L\in \cS^*_\Phi(x,y), \|x^*\|\leq 1\}.
\end{equation*}
\end{definition}
According to \cite[Lemma 3.7]{gfrerer2022local}, the SCD property was coined because for any subspace $L\in \cS^*_\Phi(x,y)$ one has $L\subseteq \gph D^*\Phi(x,y)$.
Moreover, based on this property, the semismoothness$^*$ in \Cref{defsst} can be extended to the following SCD semismoothness$^*$.
\begin{definition}[{\cite[Definition 5.1]{gfrerer2022local}}]
Let $\Phi:\cX \rightrightarrows\cX$ and $(\bar{x};\bar{y})\in\gph \Phi$, which is nonempty and closed.
Then, $\Phi$ is called {SCD semismooth*} at
$(\bar{x},\bar{y})$ if $\Phi$ has the SCD property around $(\bar x,\bar y)$ and for every $\epsilon>0$ there exists some $\delta>0$ such that
\begin{equation*} 
\begin{array}{ll}
\vert \langle x^*,x-\bar{x}\rangle-\langle y^*,y-\bar{y}\rangle\vert\leq \epsilon
\|(x;y)-(\bar{x};\bar{y})\|\|(x^*;y^*)\|
\\[2mm]
\qquad
\forall(x;y)\in \mathbb{B}_\delta(\bar{x};\bar{y}),\quad \forall
(y^*;x^*)\in\bigcup {\cS^*_\Phi(x,y)}\subseteq \gph D^*\Phi(x,y).
\end{array}
\end{equation*}
\end{definition}
Note that the GE \eqref{gecp} can be solved equivalently via finding $(x;d)\in\Re^n\times\Re^n$ such that
\[
\label{GExd}
0\in \cJ (x,d):=\begin{pmatrix}F(x)+\partial q(d)
\\x-d
\end{pmatrix}.
\]
Given $\gamma>0$, define the proximal residual mapping
$u_\gamma:\Re^n\to\Re^n$ by
\begin{equation}
\label{proximal}
\begin{array}{ll}
u_\gamma (x):=\cP_{\gamma^{-1}q} \big(x- {\gamma}^{-1}F(x)\big)-x \quad \forall x\in \Re^n.
\end{array}
\end{equation}
In \cite[Sect. 5]{Gfrerer222}, the following implementable SCD semismooth* Newton method was proposed for \eqref{gecp}, which is an application of \cite[Algorithm 1]{gfrerer2022local} to \eqref{GExd}.

\begin{algorithm}[H]
\caption{An implementable SCD semismooth$^*$ Newton method for solving \eqref{gecp} \label{algo:ssnm_GE}}

\Input{$x^{(0)}\in\Re^n$, $F:\Re^n\to\Re^n$ and $q:\Re^n\to\bar{\Re}.$}
\Output{$\{x^{(k)}\}.$}
\For{$k=0,1, \ldots$,}{
1. if $0\in F(x^{(k)})+\partial q(x^{(k)})$, stop the algorithm;
\\[1mm]
2. select  
$\gamma^{(k)}>0$, and compute
$u^{(k)}:=u_{\gamma^{(k)}}(x^{(k)})$,
$\hat{d}^{(k)}:=x^{(k)}+u^{(k)}$ and
$\hat{d}^{*(k)}:=-\gamma^{(k)}u^{(k)}-F(x^{(k)})$;
\\[1mm]
3. select $(X^{*(k)},Y^{*(k)})$ with $\range(Y^{*(k)};X^{*(k)})\in{\cS}^*_{\partial q}\big(\hat{d}^{(k)},\hat{d}^{*(k)}\big)$, and compute the Newton direction $\Delta x^{(k)}$ from
\begin{equation}\label{iteration}
 {(Y^{*(k)}}^\top F'(x^{(k)})+{X^{*(k)}}^\top )\Delta x^{(k)}={( \gamma^{(k)}Y^{*(k)}}^\top +{X^{*(k)}}^\top )u^{(k)},
 \end{equation}
 and obtain the new iterate via $x^{(k+1)}:=x^{(k)}+\Delta x^{(k)}.$}
\end{algorithm}
One has the following convergence theorem for Algorithm \ref{algo:ssnm_GE}.
\begin{theorem}[{\cite[Theorem 5.2]{Gfrerer222}}]
\label{ThConvSSNewtonVI}
Let $\bar{x}$ be a solution of \eqref{gecp} and assume that $\partial q$ is SCD semismooth$^*$ at $(\bar{x},-F(\bar{x}))$. In addition, suppose that $F+\partial q$ is SCD regular around $(\bar{x},0)$. Then for every pair $\underline{\gamma},\bar\gamma$ with $0<\underline{\gamma}\leq \bar\gamma$ there exists a neighborhood $U$ of $\bar{x}$ such that for every starting point $x^{(0)}\in U$ Algorithm \ref{algo:ssnm_GE} produces a sequence $\{x^{(k)}\}$ converging superlinearly to $\bar{x}$, provided we choose in every iteration step $\gamma^{(k)}\in [\underline{\gamma},\bar\gamma]$.
\end{theorem}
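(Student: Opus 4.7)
The plan is to deduce \Cref{ThConvSSNewtonVI} from the G-semismooth Newton convergence result \Cref{thmgss} applied to the proximal residual equation $u_\gamma(x)=0$, which is equivalent to \eqref{gecp} for every $\gamma>0$. The first task is to cast \Cref{algo:ssnm_GE} as an instance of \Cref{alg:nm} for the locally Lipschitz mapping $u_\gamma$. Since $\cP_{\gamma^{-1}q}$ is nonexpansive and $F$ is $C^1$, local Lipschitz continuity of $u_\gamma$ on a neighborhood of $\bar x$ (uniform in $\gamma\in[\underline\gamma,\bar\gamma]$) is immediate. Moreover, setting $\hat d^{(k)}=\cP_{{\gamma^{(k)}}^{-1}q}(x^{(k)}-{\gamma^{(k)}}^{-1}F(x^{(k)}))$ and $\hat d^{*(k)}=-\gamma^{(k)}u^{(k)}-F(x^{(k)})$, the resolvent identity $\cP_{\gamma^{-1}q}=(I+\gamma^{-1}\partial q)^{-1}$ yields $\hat d^{*(k)}\in\partial q(\hat d^{(k)})$, so the SCD data $(X^{*(k)},Y^{*(k)})$ in line 3 of \Cref{algo:ssnm_GE} are legitimate selections from $\cS^{*}_{\partial q}(\hat d^{(k)},\hat d^{*(k)})$.

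Next, I would rewrite the linear system \eqref{iteration} in the form $V_k\Delta x^{(k)}=-u_\gamma(x^{(k)})$ by observing that, with $A_k={Y^{*(k)}}^\top F'(x^{(k)})+{X^{*(k)}}^\top$ and $B_k=\gamma^{(k)}{Y^{*(k)}}^\top+{X^{*(k)}}^\top$, the right-hand side of \eqref{iteration} equals $-B_k\bigl(-u_\gamma(x^{(k)})\bigr)$; nonsingularity of $B_k$ under the SCD regularity assumption then permits taking $V_k=-B_k^{-1}A_k$. The induced set-valued mapping
\begin{equation*}
\cT(x):=\big\{-B^{-1}A\,\big|\,\range(Y^{*};X^{*})\in\cS^{*}_{\partial q}(\hat d(x),\hat d^{*}(x))\big\},
\end{equation*}
with $\hat d(x),\hat d^{*}(x)$ the corresponding proximal and residual quantities, inherits compact-valuedness and upper semicontinuity at $\bar x$ from the outer-limit definition of $\cS^{*}_{\partial q}$ together with the continuity of the proximal step $x\mapsto(\hat d(x),\hat d^{*}(x))$.

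With this algorithmic identification in hand, the remaining verifications read off the hypotheses of \Cref{thmgss}: G-semismoothness of $u_\gamma$ with respect to $\cT$ at $\bar x$ follows from the SCD semismooth$^{*}$ property of $\partial q$ at $(\bar x,-F(\bar x))$ by transporting the little-$o$ estimate through the resolvent identity and through the linear-algebraic definition of $V_k$; nonsingularity of every $V\in\cT(\bar x)$ follows from SCD regularity of $F+\partial q$ around $(\bar x,0)$, because a singular $V=-B^{-1}A$ would generate a nonzero element of the form $(y^{*};0)$ inside some $L\in\cS^{*}_{F+\partial q}(\bar x,0)$, contradicting $\cS^{*}_{F+\partial q}(\bar x,0)\subseteq\mathds{Z}_n^{\rm reg}$. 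Once these are verified uniformly over $\gamma\in[\underline\gamma,\bar\gamma]$, \Cref{thmgss} delivers local superlinear convergence from any starting point in a common neighborhood $U$ of $\bar x$.

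The main obstacle is the subspace-level translation performed in the second step: one must show that the operators $V_k$ built from the SCD data of $\partial q$ at $(\hat d^{(k)},\hat d^{*(k)})$ genuinely act as G-semismooth Newton maps for $u_\gamma$ at $x^{(k)}$, i.e., that the SCD semismoothness$^{*}$ estimate survives being pushed through the proximal chain rule underlying $u_\gamma(x)=\cP_{\gamma^{-1}q}(x-\gamma^{-1}F(x))-x$ and through the Schur-complement form of $V_k$. This is precisely the kind of identification that the paper isolates in order to justify its title in this setting, and once it is secured the appeal to \Cref{thmgss} replaces the original SCD-based convergence argument of \cite{Gfrerer222}.
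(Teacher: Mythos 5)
Your plan is the same route the paper takes in \Cref{secscd}: pass to the proximal residual $u_\gamma$, identify the linear system \eqref{iteration} with a G-semismooth Newton step for $u_\gamma$, transport SCD semismoothness$^*$ of $\partial q$ to G-semismoothness of $u_\gamma$ with respect to the composite generalized Jacobian $\cT_{u_\gamma}$ of \eqref{surjacobian}, deduce nonsingularity of $\cT_{u_\gamma}(\bar x)$ from SCD regularity, and conclude via the G-semismooth Newton convergence theory. Two points deserve correction or elaboration. First, nonsingularity of $B_k = \gamma^{(k)}{Y^{*(k)}}^\top+{X^{*(k)}}^\top$ is unconditional, not a consequence of SCD regularity: writing $(Y^{*(k)};X^{*(k)})=(B;\gamma^{(k)}(I-B))W$ with $B\in\partial_B\cP_{{\gamma^{(k)}}^{-1}q}$ and $W$ the (nonsingular) change-of-basis matrix gives $B_k=\gamma^{(k)}W^\top$; SCD regularity controls instead the invertibility of the coefficient matrix $A_k$ (equivalently of $V_k=-B_k^{-1}A_k=B(I-{\gamma^{(k)}}^{-1}F'(x^{(k)}))-I$). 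Second, and more substantively, you cannot close the argument by citing \Cref{thmgss} verbatim: that theorem is stated for a single fixed $H$, while \Cref{algo:ssnm_GE} interleaves Newton steps for the whole family $\{u_\gamma\}_{\gamma\in[\underline\gamma,\bar\gamma]}$ with $\gamma^{(k)}$ changing from one iteration to the next, so the neighborhood supplied by \Cref{thmgss} is a priori $\gamma$-dependent and does not by itself justify the mixed iteration. The paper does not invoke \Cref{thmgss} here; instead it reproves the one-step contraction bound directly and uniformly in $\gamma$ in \Cref{u_gamma_Gss}, choosing the $\epsilon$ of \eqref{epsilongss} and the radius so that \Cref{ssbound1}(b) together with the SCD regularity bound \eqref{bound_kappa} gives $\|x^{(k+1)}-\bar x\|\le\xi\|x^{(k)}-\bar x\|$ on a common ball for every admissible $\gamma^{(k)}$. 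Your proposal acknowledges the need for uniformity, but the appeal to \Cref{thmgss} should be replaced by (or supplemented with) this explicit uniform one-step estimate over the $\gamma$-family.
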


\section{Proof of
Algorithm \ref{algo:snm_GE} as a G-semismooth Newton method}
\label{secinv}
In this section, we demonstrate that Algorithm \ref{algo:snm_GE} can be treated as an application of the G-semismooth Newton method (Algorithm \ref{alg:nm}). 

\subsection{Lipschitz continuous localization}
 
Let $F$, $G$ and $D$ be the functions and the polyhedral set in \eqref{ssnge}.
 
Suppose that \Cref{as1} holds and $\omega>0$ is the radius given by \Cref{propkey} .
Recall from \Cref{propkey} that the output $\hat u$ by running Algorithm \ref{algo:approx} can be locally represented via the solution mapping
\begin{equation}
\label{solution_map}
S(x):= \{u \ | \ 0 \in u + F(x) + \nabla G(x) N_D(G(x)+ G'(x) u) \}
\quad
\forall x\in {\mathbb B}_\omega(\bar x),
\end{equation}
which is well-defined and single-valued.
Note that $S(x)=0$ if and only if $x$ is a solution of the GE \eqref{ssnge} on ${\mathbb B}_\omega(\bar x)$.
Moreover, the corresponding multiplier calculated from Algorithm \ref{algo:approx} can also be defined as a single-valued mapping
$\Lambda(\cdot)$ on $\mathbb{B}_{\omega}(\bar x)$.
It has been established in \Cref{propkey} that $S(\cdot)$ is isolated calm at point $\bar x$ in the sense that
$\|S( x)\|\le\beta_u\| x-\bar x\|$
$\forall x \in \mathbb{B}_{\omega}(\bar x)$. 
 
In fact, a more robust result can be obtained, for which the following consequence of the reduction approach \cite[Example 3.139]{Perturbation} is necessary.

\begin{lemma} \label{Equivalence_form}
Suppose that \Cref{as1} holds and $\omega>0$ is the radius given by \Cref{propkey}.
Define for $x\in {\mathbb B}_\omega(\bar x)$ the parameterized nonsmooth equation
\[
\label{Hmapping2}
\widetilde \cH(u,\lambda; x):=
\begin{pmatrix}
u+F(x)+\nabla G(x)\lambda
\\[1mm]
G(x)+G'(x)u-\Pii_D(G(x)+G'(x)u+\lambda)
\end{pmatrix}
=0. 
\]
Then, for any $\tilde x\in{\mathbb B}_\omega(\bar x)$, 
the nondegeneracy condition holds at $(\tilde x,\tilde d ):=(\tilde x, G(\tilde x)+G'(\tilde x) S(\tilde x))$ for all $\tilde x\in\mathbb{B}_\omega(\bar x)$, where $S(\cdot)$ is defined by \eqref{solution_map}, in the sense that
\[
\label{degeneracycondition}
G'(\tilde x)\Re^n
+{\lin} T_D(\tilde d)=\Re^s\quad\forall \tilde x\in\mathbb{B}_\omega(\bar x).
\]
Moreover, let $\tilde l$ be the dimension of ${\lin} T_D(\tilde d)$ and define $\cQ:=\widetilde W^\top (T_D(\tilde d))$, where $\widetilde W\in\Re^{s\times {s-\tilde l}}$ is any matrix whose columns are linearly independent such that $\range\widetilde W={\spa}N_D(\tilde d)$. 
Then the nonsmooth equation \eqref{Hmapping2} is locally equivalent to
\[
\label{Hreduced}
\widetilde \cH_{\widetilde W}(u,\mu; x):=
\begin{pmatrix}
u+F(x)+\nabla G(x)\widetilde W\mu
\\[1mm]
\widetilde W^\top (G(x)+G'(x)u-\tilde d)
-\Pii_\cQ\big(\widetilde W^\top (G(x)+G'(x)u-\tilde d)+\mu\big)
\end{pmatrix}
=0, 
\]
in the sense that, when $x$ is sufficiently close to $\tilde x$, $(u,\lambda)$ solves \eqref{Hmapping2} if and only if $(u,\mu)$ solves \eqref{Hreduced} and $\lambda =\widetilde W \mu$. 
\end{lemma}
\begin{proof}{Proof}
Since \Cref{as1} holds, according to the proof of \citep[Proposition 5.7]{Gfrerer2021}, for any $\tilde x\in\mathbb{B}_\omega(\bar x)$ one has that $(\tilde u,\tilde \lambda):=(S(\tilde x), \Lambda(\tilde x))$ is the unique point such that \eqref{Hmapping2} holds at $(\tilde u,\tilde \lambda;\tilde x)$. 
Moreover, from \Cref{equi_form_nondegenerate} and the proof of \citep[Proposition 5.7]{Gfrerer2021} we know that the nondegeneracy condition \eqref{degeneracycondition} holds at $(\tilde x,\tilde d)$. 
From the definitions of $\tilde d$ and $\widetilde W$ we know that the mapping $d\to \widetilde W^\top (d-\tilde d)$ meets the requirements in \cite[Definition 3.135]{Perturbation}, so that $D$ is $\cC^\infty$-cone reducible to $\cQ$, which is a pointed closed convex cone.
Then, it comes from \cite[Section 4]{shapiro03} that the nonsmooth equation \eqref{Hmapping2} is locally equivalent to \eqref{Hreduced}. 
 \end{proof}

Based on the above lemma, the following result holds.
\begin{proposition}
\label{Lipschitz_of_S}
Suppose that Assumption \ref{as1} holds and $\omega>0$ is the radius given by \Cref{propkey}.
Then, both the mapping $S(\cdot)$ defined in \eqref{solution_map} and the multiplier mapping $\Lambda(\cdot)$ are Lipschitz continuous in $\mathbb{B}_{\omega}(\bar x)$.
Moreover, $S(\cdot)$ is G-semismooth in $\inti\mathbb{B}_{\omega}(\bar x)$.
\end{proposition}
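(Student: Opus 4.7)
The plan is to express $(S,\Lambda)$ as the unique local solution of the parametric KKT equation $\widetilde\cH(u,\lambda;x)=0$ in \eqref{Hmapping2} and apply a nonsmooth implicit function theorem. Since $D$ is polyhedral, $\Pii_D$ is piecewise linear; together with the $C^1$ regularity of $F$ and of $G'$ (the latter following from $G\in C^2$), this makes $\widetilde\cH$ a PC$^1$ map in $(u,\lambda,x)$. It then suffices to verify that at every $\tilde x\in\mathbb{B}_\omega(\bar x)$, every element of the partial Clarke generalized Jacobian $\partial_{(u,\lambda)}\widetilde\cH(\tilde u,\tilde\lambda;\tilde x)$ is nonsingular: Clarke's Lipschitzian implicit function theorem then produces a locally Lipschitz solution map that coincides with $(S,\Lambda)$ near $\tilde x$ by uniqueness, and its piecewise-smooth refinement (e.g., in the spirit of Scholtes) yields a PC$^1$ solution on $\inti\mathbb{B}_\omega(\bar x)$, which is automatically G-semismooth with respect to $\partial_B S$ and $\partial S$ in the sense of \Cref{defgss}.

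To verify the nonsingularity I would pass to the locally equivalent reduced equation $\widetilde\cH_{\widetilde W}(u,\mu;x)=0$ in \eqref{Hreduced}. By \Cref{partialbpiq}, every partial Clarke Jacobian element of $\widetilde\cH_{\widetilde W}$ at $(\tilde u,\tilde\mu;\tilde x)$ has the block form
\[
M=\begin{pmatrix} I_n & \nabla G(\tilde x)\widetilde W \\ (I-P)\widetilde W^\top G'(\tilde x) & -P \end{pmatrix},
\]
where $P=\Pii_{\spa\cG}$ for some face $\cG$ of the critical cone $\cK_\cQ(0,\tilde\mu)$. Solving $M(\Delta u;\Delta\mu)=0$ by substituting $\Delta u=-\nabla G(\tilde x)\widetilde W\Delta\mu$ from the first block row and then decomposing $\Delta\mu$ along $\spa\cG$ and its orthogonal complement, one combines the identity block (which reflects the strong convexity of the QP \eqref{qp}) with the injectivity of $\nabla G(\tilde x)\widetilde W$ guaranteed by the nondegeneracy \eqref{degeneracycondition} to conclude that $\Delta u=\Delta\mu=0$. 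Lipschitz continuity on the full compact set $\mathbb{B}_\omega(\bar x)$ then follows from pointwise local Lipschitzness via a standard covering argument.

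The main obstacle is precisely this nonsingularity analysis, since it must be carried out uniformly over all faces $\cG$ of the critical cone, requiring a careful interplay between \Cref{partialbpiq}, the identity block from strong convexity, and the injectivity supplied by the nondegeneracy. All remaining steps — invoking the nonsmooth implicit function theorem, recovering $\Lambda=\widetilde W\mu$ from the reduction, and deducing G-semismoothness from the PC$^1$ structure of the solution map — are routine.
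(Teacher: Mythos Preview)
Your proposal is correct and follows the same high-level architecture as the paper --- reduce to the parametric equation $\widetilde\cH_{\widetilde W}$ in \eqref{Hreduced}, verify a nonsingularity condition driven by the identity block (strong convexity of \eqref{qp}) and the injectivity of $\nabla G(\tilde x)\widetilde W$ (nondegeneracy \eqref{degeneracycondition}), apply a nonsmooth implicit function theorem locally, and patch via compactness --- but the implicit-function machinery you invoke is different. The paper does not compute the block Jacobian $M$ or argue directly via \Cref{partialbpiq}; instead it records the strong-monotonicity identity \eqref{regularity} and cites the packaged results of Meng--Sun--Zhao \cite[Proposition~2 and Corollary~2]{meng}, which deliver local Lipschitz continuity and G-semismoothness of the solution map in one stroke whenever the projection is (strongly) semismooth. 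Your route through Clarke's implicit function theorem plus the PC$^1$/Scholtes refinement is more elementary and self-contained, and it makes the nonsingularity analysis explicit; the paper's route is shorter and would continue to work if $D$ were merely $C^2$-cone reducible with a semismooth projection rather than polyhedral, since it does not rely on piecewise linearity of $\Pii_\cQ$.
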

\begin{proof}{Proof}
 
Let $\tilde x\in\mathbb{B}_\omega(\bar x)$, and define $(\tilde u,\tilde \lambda):=(S(\tilde x), \Lambda(\tilde x))$ and $\tilde d:=G(\tilde x)+G'(\tilde x) \tilde u$.
With $\tilde l$, $\cQ$ and $\widetilde W$ being defined the same as those in \Cref{Equivalence_form}, it comes from this lemma that the parameterized nonsmooth equation
\eqref{Hmapping2} is locally equivalent to \eqref{Hreduced} around $\tilde x$, and \eqref{degeneracycondition} holds.
In particular, $\widetilde \cH_{\widetilde W}(\tilde u,\tilde \mu; \tilde x)=0$
with $\tilde\mu \in \Re^{s-\tilde l}$ being the unique vector such that $\tilde\lambda=\widetilde W\tilde\mu$.
 
Moreover, it holds that
$$
\big\langle \Delta u,\big (( \tilde u+F( \tilde x))'_u+ (\nabla G( \tilde x) \widetilde W\tilde \mu)'_u\big)
\Delta u
\big\rangle =\|\Delta u\|^2.
$$
Therefore, by following the proof of \cite[Proposition 2]{meng} and using \cite[Corollary 2]{meng} we know that there exists a neighborhood $\mathbb{O}(\tilde x)$ of $\tilde x$ such that $S(\cdot )$ and $\Lambda(\cdot)$ are Lipschitz continuous.
Since $\mathbb{B}_{\omega}(\bar x)$ is a compact set, for all $\tilde x\in \mathbb B_\omega(\bar x)$, one can always find a finite collection $\chi$ of $\tilde x$ such that the union of these open neighborhoods $\cup_{\tilde x\in \chi} \mathbb{O}(\tilde x)$ covers $\mathbb{B}_{\omega}(\bar x)$. Therefore, $S(\cdot)$ and $\Lambda(\cdot)$ are Lipschitz continuous in $\mathbb{B}_\omega(\bar x)$.
Finally, as the projection onto $\cQ$ is strongly G-semismooth, by \cite[Corollary 2]{meng}
we know that $S(\cdot)$ is G-semismooth with respect to $\partial_B S$ or $\partial S$. This completes the proof.
 \end{proof}

According to Rademacher's theorem, the solution mapping $S(\cdot)$ defined in \eqref{solution_map} is almost everywhere differentiable in $\mathbb{B}_{\omega}(\bar x)$ due to \Cref{Lipschitz_of_S}. It is not hard to compute the Frech\'et derivative of $S$ when it is differentiable, but
it is hard to directly compute the corresponding Bouligand subdifferential \eqref{bouligand} or Clarke's generalized Jacobian by taking limits, since $S(\cdot)$ is implicitly defined.
Therefore, to implement Algorithm \ref{alg:nm} to solve the nonsmooth equation $S(x)=0$, the corresponding mapping $\cT$ should be explicitly computed, which will be done in the next part.

\subsection{G-Semismoothness}
The analysis in \cite{meng} for locally Lipschitz continuous homeomorphisms can be utilized to compute a set-valued mapping such that the solution mapping $S(\cdot)$ defined in \eqref{solution_map} is G-semismooth with respect to it.
Specifically, we have the following key result which gives the G-semismoothness of $S$ around $\bar x$.

\begin{proposition}
\label{proppartialb}
Suppose that Assumption \ref{as1} holds and
$\omega>0$ is the radius given by \Cref{propkey}.
Then the solution mapping $S(\cdot)$ defined in \eqref{solution_map} is G-semismooth at every $\tilde x\in\inti \mathbb{B}_{\omega}(\bar x)$ with respect to the set-valued mapping
\small
\begin{equation}
\label{partialbs}
\begin{array}{ll}
\cT_S(x):=
\left\{
\begin{array}{r}
-\cL_{ \lambda}'( x)
+\nabla G( x)W
\big [{W}^\top G'({x})\nabla G( x) W \big]^{-1} W ^\top (G'( x)
\cL_{ \lambda}'( x)
-[G( x)+G'( x) u]'_x)
\\[2mm]
\mid W \mbox{ has full column rank}, \,
\range W=({\spa}\cF)^\perp
\mbox{ with }
\cF \mbox{ being a face of } \cK_D(d,\lambda)
\end{array}
\right\},
\end{array}
\end{equation}
\normalsize
where $u:=S(x)$, $d := G( x)+G'( x)S( x)$ and $\lambda:=\Lambda(x)$. Moreover, $\cT_S(\cdot)$ is outer semicontinuous at every $\tilde x\in \inti \mathbb{B}_{\omega}(\bar x)$.
\end{proposition}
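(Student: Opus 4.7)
The plan is to reduce \eqref{Hmapping2} locally to the polyhedral nonsmooth equation \eqref{Hreduced}, carry out an implicit-function calculation there using \Cref{partialbpiq}, and then lift the resulting expression back through the $\cC^\infty$-cone reduction to recover the formula in \eqref{partialbs}. G-semismoothness and upper semicontinuity then follow from tools already established in \Cref{Lipschitz_of_S} and \Cref{partialbpiq}.

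I would fix $\tilde x \in \inti\mathbb{B}_\omega(\bar x)$ with $\tilde u = S(\tilde x)$, $\tilde\lambda = \Lambda(\tilde x)$, $\tilde d = G(\tilde x)+G'(\tilde x)\tilde u$, and pick $\widetilde W$ with orthonormal columns spanning $\spa N_D(\tilde d)$, as in the discussion preceding \Cref{Lipschitz_of_S}. On a neighborhood of $\tilde x$ the reduction replaces \eqref{Hmapping2} by \eqref{Hreduced} via $\lambda = \widetilde W\mu$. At any point $(u,\mu;x)$ in this neighborhood where $\Pii_\cQ$ is Fr\'echet differentiable at $V+\mu := \widetilde W^\top(G(x)+G'(x)u-\tilde d)+\mu$, \Cref{partialbpiq} gives $\Pii_\cQ'(V+\mu) = \Pii_{\spa\cG}$ for some face $\cG$ of the critical cone $\cK_\cQ(\Pii_\cQ(V+\mu),\,V+\mu-\Pii_\cQ(V+\mu))$, so that the partial Jacobian of $\widetilde\cH_{\widetilde W}$ in $(u,\mu)$ at such a point is the block matrix
\[
\begin{pmatrix} I & \nabla G(x)\widetilde W \\ (I-J)\widetilde W^\top G'(x) & -J \end{pmatrix}, \qquad J := \Pii_{\spa\cG},
\]
whose nonsingularity I would obtain from the coercivity \eqref{regularity} and the nondegeneracy \eqref{degeneracycondition}.

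Next, I would solve the associated $2\times 2$ block system for $(S'(x),M'(x))$ with right-hand side $-(\cL'_\lambda(x);\,(I-J)\widetilde W^\top[G(x)+G'(x)u]'_x)$. Taking an orthonormal basis $U_2$ of $(\spa\cG)^\perp \subseteq \Re^{s-\tilde l}$ and setting $W := \widetilde W U_2$, elimination of $M'(x)$ yields exactly the formula in \eqref{partialbs}, with $\range W = \widetilde W\cdot(\spa\cG)^\perp$. Since $\widetilde W$ is orthonormal and $\lin T_D(\tilde d) = (\range\widetilde W)^\perp$, one checks that $\widetilde W\cdot(\spa\cG)^\perp = (\spa\cF)^\perp$, where $\cF$ is the face of $\cK_D(d,\lambda)$ obtained from $\cG$ by adjoining $\lin T_D(d)$ under the standard bijection of faces of critical cones induced by the reduction; invertibility of $W^\top G'(x)\nabla G(x)W$ is then a restatement of nondegeneracy on $\range W$. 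The G-semismooth implicit-function theorem \cite[Corollary 2]{meng}, already invoked in the proof of \Cref{Lipschitz_of_S}, then gives G-semismoothness of $S$ with respect to the set of matrices arising in this way, which by the preceding identification is precisely $\cT_S(x)$.

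Finally, upper semicontinuity of $\cT_S(\cdot)$ at $\tilde x$ rests on three ingredients: continuity of $S$ and $\Lambda$ on $\mathbb{B}_\omega(\bar x)$ from \Cref{Lipschitz_of_S}; continuous differentiability of $F$ and twice continuous differentiability of $G$, which make each $x$-dependent matrix entering \eqref{partialbs} continuous in $x$; and the polyhedral face lemma \cite[Lemma 2.4]{Gfrerer2021}, which ensures that for $(d,\lambda)$ close to $(\tilde d,\tilde\lambda)$ every face of $\cK_D(d,\lambda)$ is in natural correspondence with a face of $\cK_D(\tilde d,\tilde\lambda)$. Combined with continuity of the matrix inverse on the open set where $W^\top G'(x)\nabla G(x)W$ is nonsingular, this delivers the required upper semicontinuity. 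The main obstacle I anticipate is the face-identification bookkeeping in the third paragraph: tracking precisely how the $\widetilde W$-dependent reduced projections $\Pii_{\spa\cG}$ collapse under the $\cC^\infty$-cone reduction into the clean $W$-parametrization over faces of $\cK_D(d,\lambda)$, and verifying that this yields a one-to-one correspondence preserving the subspaces $(\spa\cF)^\perp$. Everything else is either implicit-function machinery from \cite{meng} or the polyhedral face lemma already proved.
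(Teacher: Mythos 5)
Your proposal is essentially the paper's own proof: both reduce \eqref{Hmapping2} to the polyhedral system \eqref{Hreduced} via the $\cC^\infty$-cone reduction, both use the Lipschitz-homeomorphism implicit-function machinery of Meng--Sun--Zhao (\cite[Proposition 2, Corollary 2, Lemma 2]{meng}) together with \cite[Lemma 2.1]{sunmor} and \Cref{partialbpiq} to identify the resulting generalized Jacobians with projections onto spans of faces, and both then lift through the reduction by establishing a bijection between faces of $\cK_\cQ(\widetilde W^\top[d-\tilde d],\mu)$ and faces of $\cK_D(d,\lambda)$. The face-identification bookkeeping you flag as the main obstacle is indeed the bulk of the paper's argument (both inclusions between $\widetilde T_S$ and $\cT_S$ are checked explicitly there), but your outline correctly anticipates all the required ingredients.
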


\begin{proof}{Proof}
 
Let $\tilde x\in \inti \mathbb{B}_{\omega}(\bar x)$ and define $\tilde u :=S(\tilde x)$,
$\tilde d:=G(\tilde x)+G'(\tilde x)\tilde u$, and
$\tilde\lambda:= \Lambda(\tilde x)$. 
Let $\tilde l$, $\widetilde W$ and $\cQ$ be the same as those defined in \Cref{Equivalence_form}. 
We know from \Cref{Equivalence_form} that the parameterized nonsmooth equation \eqref{Hmapping2} is locally equivalent to \eqref{Hreduced} around $\tilde x$, and \eqref{degeneracycondition} holds. 
Then by following the analysis of \cite[Proposition 2]{meng} the mapping
\begin{equation*}
\Psi (v,\varsigma,x):=
\begin{pmatrix}
v+F(x)+\nabla G(x)\widetilde W\varsigma
\\[.5mm]
\widetilde W^\top[G(x)+G'(x)v-\tilde d]-\Pii_Q(\widetilde W^\top[G(x)+G'(x)v-\tilde d]+\varsigma)
\\[.5mm]
x
\end{pmatrix}
\end{equation*}
is locally Lipschitz homeomorphism around $(\tilde u,\tilde\mu,\tilde x)$, where $\tilde \mu$ is the unique vector such that $\tilde\lambda = \widetilde W \tilde \mu$.
Note that
\begin{equation*}
\big(\widetilde W^\top[G(x)+G'(x)v-\tilde d]+\varsigma\big)'(v,\varsigma,x)=
\big(\widetilde W^\top G'(x),
I,
\widetilde W^\top [G(x)+G'(x)v]'_x\big),
\end{equation*}
which is always surjective.
Consequently, from \cite[Lemma 2.1]{sunmor} we know that
\[
\label{partialpsi}
\partial_B \Psi( v,\varsigma, x)=\left\{
\begin{array}{r}
\begin{pmatrix}
I & \nabla G( x)\widetilde W & \cL_{\widetilde W\varsigma}'(x)
\\
(I-\Xi)\widetilde W^\top G'( x)
&
-\Xi
&
(I-\Xi)\widetilde W^\top [G(x)+G'(x) v]'_x
&
\\[1mm]
0&0&I
\end{pmatrix}
\qquad
\\[2mm]
\mid \Xi\in \partial_B \Pii_\cQ (\widetilde W^\top [G(x)+G'(x)v-\tilde d]+\varsigma)
\end{array}
\right\}.
\]
Since the projection operator $\Pii_\cQ$ is (strongly) semismooth, one has $\Psi$ is also semismooth, and it follows from \cite[Theorem 2]{meng} that $\Psi^{-1}$ is semismooth at $(0,0, \tilde x)\in\Re^{n}\times\Re^{s-\tilde l}\times \Re^n$. Moreover, from \cite[Lemma 2]{meng} we know that all the elements of $\partial_B \Psi( v,\varsigma, x)$ are nonsingular whenever $(v,\varsigma,x)$ is sufficiently close to $(\tilde u,\tilde \mu,
\tilde x)$.
It can be observed from \eqref{partialpsi} that each element of $\partial_B\Psi$ is nonsingular at $(v,\varsigma,x)$ if and only if each matrix
$\Xi+(I-\Xi)\widetilde W^\top G'( x)\nabla G( x)\widetilde W$ is nonsingular for all $\Xi\in \partial_B \Pii_\cQ(\widetilde W^\top [G(x)+G'(x)v-\tilde d]+\varsigma)$.
Therefore, for all $(v,\varsigma,x)$ sufficiently close to
$(\tilde u,\tilde \mu,\tilde x)$,
it holds by elementary column transformations that
\small
$$
\begin{array}{ll}
\partial_B \Psi( v,\varsigma, x)
=
\left\{
\begin{array}{l}
\begin{pmatrix}
& I-\nabla G(x)\widetilde W (\Gamma(\Xi))^{-1}
(I-\Xi)\widetilde W^\top G'( x)
&\nabla G(x)\widetilde W (\Gamma(\Xi))^{-1}
& \cE_S (v,\varsigma,\Xi)
\\[2mm]
&
(\Gamma(\Xi))^{-1}
(I-\Xi)\widetilde W^\top G'(x)
&-(\Gamma(\Xi))^{-1}&
 \cE_\Lambda (v,\varsigma,\Xi)
\\[2mm]
&0&0&I
\end{pmatrix}^{-1} \quad
\\[4mm]
\ \big | \
\Gamma(\Xi):=\Xi+(I-\Xi)\widetilde W^\top G'( x)\nabla G( x)\widetilde W
\ \mbox{with}\
\Xi\in \partial_B \Pii_\cQ (\widetilde W^\top [G(x)+G'(x)u-\tilde d]+\varsigma)
\end{array}
\right\},
\end{array}
$$
\normalsize
where
\small
\begin{equation*}
\left\{
\begin{array}{ll}
 \cE_S (v,\varsigma,\Xi):=
-\cL_{\widetilde{W}\varsigma}'(x)
+\nabla G(x)\widetilde W(\Gamma(\Xi))^{-1}(I-\Xi)\widetilde W^\top (G'( x)
\cL_{\widetilde W\varsigma}'(x)
-[G(x)+G'(x)v]'_x),
\\[2mm]
 \cE_\Lambda (v,\varsigma,\Xi):
=-(\Gamma(\Xi))^{-1}(I-\Xi)\widetilde W^\top (G'(x)\cL_{\widetilde{W}\varsigma}'(x)
-[G(x)+G'(x)v]'_x).
\end{array}
\right.
\end{equation*}
\normalsize
Note that for all $x$ sufficiently close to $\tilde x$, it holds that
$(u;\mu;x) =\Psi^{-1} (0,0, x)$ where $\mu$ is the unique vector such that $\lambda=\widetilde W\mu$.
Since $\Psi^{-1}$ is G-semismooth,
the solution mapping $S(\cdot)$ is G-semismooth at $\tilde x$ with respect to
\small
\begin{equation*}
 \widetilde \cE_S (x):=
\left\{
\begin{array}{ll}
-\cL_{\lambda}'(x)
+\nabla G(x)\widetilde W(\Gamma(\Xi))^{-1}(I-\Xi)\widetilde W^\top (G'( x)
\cL_{\lambda}'(x)
-[G(x)+G'(x)u]'_x)
\quad
\\[2mm]
\quad\mid 
\Gamma(\Xi):=\Xi+(I-\Xi)\widetilde W^\top G'( x)\nabla G( x)\widetilde W
\mbox{ with }
\Xi\in \partial_B \Pii_\cQ (\widetilde W^\top [G(x)+G'(x)u-\tilde d]+\mu)
\end{array}
\right\}.
\end{equation*}
\normalsize
Moreover, $ \widetilde \cE_S (\cdot)$ is outer semicontinuous around $\tilde x$.
According to \Cref{partialbpiq}, one has for all $x$ sufficiently close to $\tilde x$,
\[
\label{partialbstemp}
\begin{array}{ll}
 \widetilde \cE_S (x)
=\left\{
\begin{array}{l}
-\cL_{\lambda}'( x)
+\nabla G( x)\widetilde W(\tilde\Gamma_\cG 
)^{-1}
\Pii_{(\spa \cG)^\perp}
\widetilde W^\top (G'( x)
\cL_{ \lambda}'( x)
-[G(x)+G'( x) u]'_x)
\\[2mm]
\ \mid 
\begin{array}{l}
\tilde \Gamma_\cG
:=\Pii_{\spa \cG}+ \Pii_{({\spa} \cG)^\perp}
\widetilde W^\top G'( x)\nabla G( x)\widetilde W, \ 
\cG \mbox{ is a face of } \cK_Q( \widetilde W^\top [d-\tilde d],\mu)
\end{array}
\end{array}
\right\}.
\end{array}
\]
Let $\cG$ be a face of $\cK_Q( \widetilde W^\top [d-\tilde d],\mu)$.
One can find two matrices $U_1$ and $U_2$, whose columns form an orthonormal basis of ${\spa}\cG$ and $({\spa}\cG)^\perp$, respectively.
Note that $\Pii_{{\spa}\cG}= U_1U_1^\top $ and $\Pii_{({\spa}\cG)^\perp}= U_2U_2^\top$.
Consequently,
\begin{equation*}
\begin{array}{ll}
(\tilde \Gamma_\cG)^{-1}\Pii_{(\spa \cG)^\perp}
=
[U_1U_1^\top
+
U_2U_2^\top
\widetilde W^\top G'( x)\nabla G( x)\widetilde W]^{-1}
U_2U_2^\top .
\end{array}
\end{equation*}
For any $w $ and $\nu$ such that $\nu =[U_1U_1^\top + U_2U_2^\top \widetilde W^\top G'( x)\nabla G(x)\widetilde W]^{-1}
U_2U_2^\top w $, it holds that
\begin{equation*}
\begin{array}{ll}
&
U_1U_1^\top \nu + U_2U_2^\top \widetilde W^\top G'( x)\nabla G(x)\widetilde W \nu =
U_2U_2^\top w
\\[1mm]
\Rightarrow&
U_1^\top \nu = 0
\ 
\mbox{and}
\ 
U_2^\top \widetilde W^\top G'(x)\nabla G(x)\widetilde W \nu =
U_2^\top w
\\[1mm]
\Rightarrow&
 \nu=U_2\xi \mbox{ for some } \xi, \mbox{ and } U_2^\top \widetilde W^\top G'(x)\nabla G(x)\widetilde W U_2\xi =
U_2^\top w .
\end{array}
\end{equation*}
Therefore, one gets $\nu=
U_2\xi
=U_2 [U_2^\top \widetilde W^\top G'( x)\nabla G(x)\widetilde W U_2]^{-1}
U_2^\top w $, so that
\[
\label{invimportant}
\widetilde W
(\tilde \Gamma_\cG)^{-1}\Pii_{(\spa \cG)^\perp}\widetilde W^\top
=
\widetilde WU_2 [U_2^\top \widetilde W^\top G'( x)\nabla G(x)\widetilde W U_2]^{-1}U_2^\top \widetilde W^\top .
\]
Recall that $\lambda=\widetilde W \mu $, so that $[\lambda]^\perp=\{e\mid \langle \widetilde W^\top e,\mu \rangle=0\}$,
which implies $\widetilde W^\top ( [\lambda]^\perp)=[\mu ]^\perp$.
One has from \cite[Theorem 6.31]{rock1998} that
\begin{equation}
\label{kdexpress}
\cK_D( d,\lambda)
=\{e\mid e\in T_D(d), \langle e,\lambda\rangle=0\}
=\{e\mid\widetilde W^\top e\in T_\cQ(\widetilde W^\top [d-\tilde d]), \langle \widetilde W^\top e,\mu \rangle=0\}.
\end{equation}
Therefore, it holds that
\[
\label{criticalconeeq}
\widetilde W^\top \cK_D( d,\lambda)
= \cK_Q( \widetilde W^\top [d-\tilde d],\mu).
\]
Consequently, one has for any given $\upsilon \in\Re^{s-\tilde l}$,
$$
\begin{array}{ll}
\sup\limits_{e\in \cK_D( d,\lambda)}\{\langle \widetilde W \upsilon, e\rangle \}
=
\sup\limits_{e\in \cK_D( d,\lambda)}\{\langle \upsilon, \widetilde W^\top e\rangle \mid e\in \cK_D(d,\lambda)\}
=
\sup\limits_{\nu \in\Re^{s-\tilde l}} \{\langle \upsilon, \nu\rangle\mid \nu\in \cK_Q( \widetilde W^\top [d-\tilde d],\mu)\}.
\end{array}
$$
Therefore, $\upsilon\in \cK_Q( \widetilde W^\top [d-\tilde d],\mu)^\circ$ if and only if $\widetilde W\upsilon\in \cK_D( d,\lambda)^\circ$.
Recall that $\cG$ is a face of $\cK_Q( \widetilde W^\top [d-\tilde d],\mu)$, i.e., there exists a nonzero vector
$\tilde \upsilon \in \Re^{s-\tilde l}$ in its polar such that $\cG=\cK_Q( \widetilde W^\top [d-\tilde d],\mu)\cap [\tilde \upsilon]^{\perp}$.
Note that
\begin{equation*}
\begin{array}{ll}
\tilde\cF:=
\{ p\mid \widetilde W^\top p \in \cG\}
=
\{ p\mid \widetilde W^\top p \in \cK_Q( \widetilde W^\top [d-\tilde d],\mu)\cap [\tilde \upsilon]^{\perp} \}=
\cK_D( d,\lambda)\cap [\widetilde W\tilde \upsilon]^{\perp}.
\end{array}
\end{equation*}
Since $\widetilde W\tilde \upsilon \in \cK_D(d,\lambda)^\circ$,
the set $\tilde\cF$ is a face of $\cK_D( d,\lambda)$.
Moreover, one has
${\spa}\tilde\cF=\{p\mid \widetilde W^\top p\in{\spa}\cG\}$.
Recall that $\range U_1={\spa}\cG$, so that
$\spa \tilde\cF=\{p\mid \widetilde W^\top p\in {\range } U_1\}
=\{p\mid U_2^\top \widetilde W^\top p=0\}$.
Therefore, if $W$ is a matrix with full column rank such that
${\range }W=(\spa\tilde\cF)^\perp=\range (\widetilde W U_2)$,
there exists a nonsingular square matrix $P$ such that
$W=\widetilde W U_2 P$.
In this case, one has
$$
\begin{array}{ll}
W
\big [{W}^\top G'({x})\nabla G( x) W \big]^{-1} W ^\top
=\widetilde W U_2 P
\big [P^\top (\widetilde W U_2)^\top G'({x})\nabla G( x)\widetilde W U_2 P \big]^{-1} P^\top (\widetilde W U_2)^\top
\\[2mm]
=\widetilde W U_2 P
\big [(\widetilde W U_2)^\top G'({x})\nabla G( x) \widetilde W U_2 P \big]^{-1} (P^\top )^{-1}P^\top (\widetilde W U_2)^\top
\\[2mm]
=\widetilde W U_2
\big [(\widetilde W U_2)^\top G'({x})\nabla G( x)\widetilde W U_2 \big]^{-1} (\widetilde W U_2)^\top .
\end{array}
$$
Such an equality, together with \eqref{partialbs}, \eqref{partialbstemp}, and \eqref{invimportant}, implies that
\[
\label{partialbinclusionfinal}
 \widetilde \cE_S (x)\subseteq
\cT_S(x).
\]
Next, we show that the inclusion in \eqref{partialbinclusionfinal} is an equality.
Let $\cF$ be an arbitrary face of $\cK_D( d, \lambda)$, i.e., there exists
$
\breve \lambda\in \cK_D( d, \lambda)^\circ
=N_D( d)+{\spa}[\lambda]\subseteq{\spa}N_D( d)\subseteq{\spa}N_D( \tilde d)$
such that
$\cF=\cK_D( d, \lambda)\cap[\breve \lambda]^\perp$.
Since $\breve\lambda\in\range\widetilde W$,
one has $\breve\lambda=\Pii_{\range\widetilde W}(\breve\lambda)=\widetilde W[\widetilde W^\top \widetilde W]^{-1}\widetilde W^\top\breve \lambda$.
Then by \eqref{kdexpress} one can get
\begin{equation*}
\cK_D( d,\lambda)\cap[\breve \lambda]^\perp
=\big\{
e\mid\widetilde W^\top e\in T_\cQ(\widetilde W^\top(d-\tilde d)), \langle \widetilde W^\top e,\mu \rangle=0,
\langle \widetilde W^\top e, [\widetilde W^\top \widetilde W]^{-1}\widetilde W^\top\breve \lambda\rangle=0
\big\}.
\end{equation*}
Therefore, it holds that $
\widetilde W^\top \cF
=\cK_Q( \widetilde W^\top [d-\tilde d],\mu)\cap [(\widetilde W^\top \widetilde W)^{-1}\widetilde W^\top \breve \lambda]^\perp$.
Then, from \eqref{criticalconeeq} one has
$
[\widetilde W^\top \widetilde W]^{-1}\widetilde W^\top \breve\lambda
\in (\widetilde W^\top \cK_D( d,\lambda))^\circ
=(\cK_Q( \widetilde W^\top [d-\tilde d],\mu))^\circ$.
Therefore, it holds that
\begin{equation*}
\langle [\widetilde W^\top \widetilde W]^{-1}\widetilde W^\top \breve\lambda, \widetilde W^\top \hat d\rangle
=
\langle \widetilde W [\widetilde W^\top \widetilde W]^{-1}\widetilde W^\top \breve\lambda, \hat d\rangle
=
\langle \breve\lambda, \hat d\rangle\le 0
\quad
\forall\hat d\in \cK_D( d,\lambda).
\end{equation*}
Consequently, $\widetilde W^\top \cF$ is exactly a face of $\cK_Q( \widetilde W^\top [d-\tilde d],\mu)$.
Thus, the inclusion in \eqref{partialbinclusionfinal} holds as an equality, i.e., $ \cT_S( x)= \widetilde \cE_S (x)$ for all $x$ sufficiently close to $\tilde x$.
Therefore, $\cT_S(\cdot)$ is also outer semicontinuous at any $\tilde x\in \inti \mathbb{B}_{\omega}(\bar x)$. This completes the proof.
 \end{proof}

\subsection{Regularity conditions}
The following result is crucial for using Algorithm \ref{alg:nm} to solve $S(x)=0$.

\begin{proposition}
\label{propbd}
Under \Cref{as1}, it holds that \Cref{ass_2} is equivalent to the regularity condition that every element of $\cT_S(\bar x)$ defined in \eqref{partialbs} is nonsingular.
\end{proposition}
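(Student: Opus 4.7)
The plan is to simplify the description of $V\in\cT_S(\bar x)$ using the fact that $\bar u:=S(\bar x)=0$ at the solution $\bar x$, reduce each $V$ to an expression involving an oblique projection $P_W$, and then translate the singularity of $V$ into a rank condition on the reduced Hessian-like matrix $Z_\cF^\top \cL_{\bar\lambda}'(\bar x) Z_\cF$.

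First I would observe that $\bar x$ solves \eqref{ssnge} implies $S(\bar x)=0$ and $\bar d:=G(\bar x)+G'(\bar x)\bar u=G(\bar x)$, so that $[G(x)+G'(x)u]'_x$ evaluated at $(\bar x,0)$ collapses to $G'(\bar x)$ (the second-derivative contribution vanishes because $u=0$). Substituting this into \eqref{partialbs} gives each element of $\cT_S(\bar x)$ in the simpler form
\begin{equation*}
V_W=-\cL_{\bar\lambda}'(\bar x)+\nabla G(\bar x) W\big[W^\top G'(\bar x)\nabla G(\bar x) W\big]^{-1}W^\top G'(\bar x)\big(\cL_{\bar\lambda}'(\bar x)-I\big),
\end{equation*}
parametrized by matrices $W$ with $\range W=(\spa\cF)^\perp$ for a face $\cF$ of $\cK_D(G(\bar x),\bar\lambda)$. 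Define $P_W:=\nabla G(\bar x) W[W^\top G'(\bar x)\nabla G(\bar x) W]^{-1}W^\top G'(\bar x)$. A short calculation shows $P_W^2=P_W$, so $P_W$ is an idempotent on $\Re^n$; moreover $\range P_W=\range(\nabla G(\bar x) W)$ and $\ker P_W=\ker(W^\top G'(\bar x))=\{u\mid G'(\bar x)u\in\spa\cF\}=\range Z_\cF$. Well-definedness (invertibility of $W^\top G'(\bar x)\nabla G(\bar x)W$) reduces to injectivity of $\nabla G(\bar x)|_{\range W}$; since every face of $\cK_D(G(\bar x),\bar\lambda)$ contains its lineality space, one has $(\spa\cF)^\perp\subseteq\spa N_D(G(\bar x))$, and \Cref{as1} (nondegeneracy) then supplies the required injectivity.

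With the projection identity $V_W=-(I-P_W)\cL_{\bar\lambda}'(\bar x)-P_W$ in hand, the equivalence follows by analyzing $V_W u=0$. Applying $P_W$ to both sides and using $P_W(I-P_W)=0$ forces $P_W u=0$; what remains, $(I-P_W)\cL_{\bar\lambda}'(\bar x)u=0$, is equivalent to $\cL_{\bar\lambda}'(\bar x)u\in\range P_W$. Since $\range P_W=(\ker P_W)^\perp=(\range Z_\cF)^\perp$, this reads $Z_\cF^\top\cL_{\bar\lambda}'(\bar x)u=0$. Combining with $u\in\range Z_\cF$ and parameterizing $u=Z_\cF\xi$ (bijectively onto $\range Z_\cF$, by orthonormality of the columns of $Z_\cF$), one concludes $V_W$ is singular if and only if $Z_\cF^\top\cL_{\bar\lambda}'(\bar x)Z_\cF$ is singular. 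Ranging over all faces $\cF$ (and noting different choices of $W$ with $\range W=(\spa\cF)^\perp$ yield the same $P_W$, so the singularity test only depends on $\cF$) delivers the stated equivalence between \Cref{ass_2} and the nonsingularity of every element of $\cT_S(\bar x)$.

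The only delicate point I anticipate is justifying the inclusion $(\spa\cF)^\perp\subseteq\spa N_D(G(\bar x))$ used to invoke \Cref{as1} for invertibility of $W^\top G'(\bar x)\nabla G(\bar x)W$; this follows because $\lin\cK_D(G(\bar x),\bar\lambda)\subseteq\spa\cF$ for every face $\cF$, together with the polarity identity $(\lin\cK_D(G(\bar x),\bar\lambda))^\perp=\spa\cK_D(G(\bar x),\bar\lambda)^\circ\subseteq\spa N_D(G(\bar x))$ from \Cref{sectpre}. The rest of the argument is the projection-and-kernel manipulation described above and should be essentially routine.
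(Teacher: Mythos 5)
Your proposal is correct and follows essentially the same route as the paper's proof: substitute $\bar u=S(\bar x)=0$ to simplify $[G(x)+G'(x)u]'_x$ to $G'(\bar x)$, rewrite each $V\in\cT_S(\bar x)$ as $-(I-P_W)\cL'_{\bar\lambda}(\bar x)-P_W$, split $V\nu=0$ into the two complementary conditions $P_W\nu=0$ and $(I-P_W)\cL'_{\bar\lambda}(\bar x)\nu=0$, and translate this into nonsingularity of $Z_\cF^\top\cL'_{\bar\lambda}(\bar x)Z_\cF$. One small terminological slip: $P_W$ is in fact the \emph{orthogonal} projection onto $\range(\nabla G(\bar x)W)$ (since $W^\top G'(\bar x)=(\nabla G(\bar x)W)^\top$), which the paper notates as $\Pii_{\range(\nabla G(\bar x)W)}$; this orthogonality is what justifies your assertion $\range P_W=(\ker P_W)^\perp$, and is worth stating explicitly rather than calling $P_W$ oblique.
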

\begin{proof}{Proof}
Since \Cref{as1} holds, one has $\bar u=S(\bar x)$.
Then by \Cref{proppartialb} it holds that
\begin{equation*}
\begin{array}{ll}
\cT_S(\bar x)
=
\left\{
\begin{array}{l}
-\cL_{\bar \lambda}'(\bar x)
+\nabla G(\bar x)W
\big [{W}^\top G'(\bar{x})\nabla G(\bar x) W \big]^{-1} W ^\top (G'(\bar x)
\cL_{\bar \lambda}'(\bar x)
-G'(\bar x))
\\[1mm]
\equiv
-(I-\Pii_{\range(\nabla G(\bar x)W)})\cL_{\bar\lambda}'(\bar x)
-\Pii_{\range(\nabla G(\bar x)W)}
\\[1mm]
\mid W \mbox{ has full column rank, }
\range W=({\spa}\cF)^\perp
\mbox{ with }
\cF \mbox{ being a face of } \cK_D(\bar d,\bar\lambda)
\end{array}
\right\},
\end{array}
\end{equation*}
where $\bar d := G(\bar x)+G'(\bar x)S(\bar x)$ and $\bar \lambda := \Lambda(\bar x)$.
Let $\cF$ be an arbitrary face of $\cK_D(G(\bar x),\bar \lambda)$
with $W$ having full column rank such that $\range W=({\spa}\cF)^\perp$.
Let $Z$ be an arbitrary matrix (with full column rank) such that
$\range Z=\{u\mid G'(\bar x )u\in {\spa} \cF\}={\ker}(W^\top G'(\bar x))$.
Note that ${\range(\nabla G(\bar x)W)}={\ker}Z^\top$.
Therefore, it holds that
\begin{equation*}
\begin{array}{lll}
&-(I-\Pii_{\range(\nabla G(\bar x)W)})\cL_{\bar\lambda}'(\bar x)\nu
-\Pii_{\range(\nabla G(\bar x)W)} \nu\neq0\quad\forall \nu\neq 0
\\[1mm]
\Leftrightarrow &
(I-\Pii_{\range(\nabla G(\bar x)W)})\cL_{\bar\lambda}'(\bar x)\nu\neq 0
\quad\forall \nu\neq 0 \mbox{ such that }
\Pii_{\range(\nabla G(\bar x)W)} \nu= 0
\\[1mm]
\Leftrightarrow &
Z (Z^\top Z)^{-1}Z^\top \cL_{\bar\lambda}'(\bar x)\nu\neq 0
\quad\forall 0\neq \nu \in \range Z
\\[1mm]
\Leftrightarrow &
Z^\top \cL_{\bar\lambda}'(\bar x)\nu\neq 0
\quad\forall 0\neq \nu \in \range Z.
\end{array}
\end{equation*}
Thus, \Cref{ass_2} is equivalent to the condition that every element of $\cT_S(\bar x)$ is nonsingular.
This completes the proof.
 \end{proof}

\subsection{Equivalence to a G-Semismooth Newton method}
We are ready to show that the semismooth* Newton method in algorithm \ref{algo:snm_GE} is exactly a special case of the G-semismooth Newton method in Algorithm \ref{alg:nm}.
Suppose that Assumptions \ref{as1} and \ref{ass_2} hold.
Let $\omega$ be the parameter defined in \Cref{propkey}.
For any $\tilde x \in \mathbb{B}_\omega(\bar x)$ with
$\tilde d=G(\tilde x)+G'(\tilde x)S(\tilde x)$ and $\tilde u=S(\tilde x)$,
one has $\tilde\lambda\in N_D(\tilde d)$.
Moreover, it is easy to see that
$\bar\cF:={\lin}T_D(\tilde d)={\lin}\cK_D(\tilde d,\tilde \lambda)$
is a face of $\cK_D(\tilde d,\tilde \lambda)$.
In fact, from the proof of \Cref{proppartialb} one can see that
$\bar\cF$ is exactly the face such that $\widetilde W^\top \bar\cF=\cG\equiv\{0\}$, where $\widetilde W$ is a matrix with full column rank such that $
\range\widetilde W={\spa}N_D(\tilde d)$, while $\cG$ is a face of $\cK_\cQ(0, \tilde\mu)=\widetilde W^\top \cK_D(\tilde d,\tilde \lambda)$.
Consequently, the columns of $\widetilde W$ form a basis of
$({\spa}\bar\cF)^{\perp}$, so that by \eqref{partialbs} one can get
\[
\label{mx}
\begin{array}{ll}
V_{\tilde x}:&=
-\cL_{\tilde \lambda}'(\tilde x)
+\nabla G(\tilde x)\widetilde W
\big [{\widetilde W}^\top G'(\tilde {x})\nabla G(\tilde x) \widetilde W \big]^{-1} \widetilde W^ \top (G'(\tilde x)
\cL_{\tilde \lambda}'(\tilde x)
-[G(\tilde x)+G'(\tilde x)\tilde u]'_x)
\\[1mm]
&=-\Pii_{\cZ} \mathcal{L}'_{\tilde \lambda}(\tilde x)
- \Pii_{\cW} - \nabla G(\tilde x)\widetilde W
\big [\widetilde {W}^\top G'(\tilde {x})\nabla G(\tilde x)\widetilde W \big]^{-1}
\widetilde W ^\top (G'(\tilde x) \tilde u)_x'
\in\cT_S(\tilde x),
\end{array}
\]
where $\cW:=\range(\nabla G(\tilde x)\widetilde W)$ and $\cZ:=\cW^{\perp}$.
Based on the results established in the previous two subsections, we can apply Algorithm \ref{alg:nm} to solve problem \eqref{ssnge}.
The resulting implementation is given as follows.

\begin{algorithm}[H]
\caption{A G-Semismooth Newton method for solving \eqref{ssnge} \label{algo:snm_GEx}}

\Input {$F:\Re^n\to\Re^n$, $G:\Re^n\to\Re^s$, $D \subset \Re^s$,
$x^{(0)}\in\Re^n$, and $\varrho\ge 0.$}
\Output {$\{x^{(k)}\}.$}
\For{$k=0,1, \ldots$,}{
1. if $x^{(k)}$ solves \eqref{ssnge}, stop the algorithm;
\\[1mm]
2. run the approximation step in Algorithm \ref{algo:approx} with input $x^{(k)}$ to compute $\hat u^{(k)}$, $\hat{\lambda}^{(k)}$, $\hat{d}^{(k)}$ and $\mathcal{L}_{\hat{\lambda}^{(k)}}(x^{(k)})$;
\\[1mm]
3. compute $V^{(k)}$ such that ${\dist}(V^{(k)},\cT_S(x^{(k)}))\le \varrho \|{\hat u^{(k)}}\|$ with $\cT_S$ being given in \eqref{partialbs};
\\[1mm]
4. compute the Newton direction $\Delta x^{(k)}$ satisfying
$V^{(k)}\Delta x^{(k)}+{\hat u^{(k)}}=0$,
and set $x^{(k+1)}:=x^{(k)}+\Delta x^{(k)}$.
}
\end{algorithm}

The following result shows that
Algorithm \ref{algo:snm_GE} is a special case of Algorithm
\ref{algo:snm_GEx}.

\begin{theorem}
Suppose that Assumptions \ref{as1} and \ref{ass_2} hold.
Then, Algorithm \ref{algo:snm_GE} is an instance of
Algorithm \ref{alg:nm} (in the form of Algorithm \ref{algo:snm_GEx}) in the sense that the local superlinear convergence of Algorithm \ref{algo:snm_GE} can be obtained from \Cref{thmgss}.
\end{theorem}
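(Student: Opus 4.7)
The plan is to exhibit Algorithm \ref{algo:snm_GE} as a concrete instance of Algorithm \ref{alg:nm} applied to the locally Lipschitz equation $S(x)=0$, where $S$ is the Lipschitz continuous localization defined in \eqref{solution_map}. First, I would note that, by Proposition \ref{propkey} and the definition of $S$, the output $\hat u^{(k)}$ of Algorithm \ref{algo:approx} on input $x^{(k)}$ coincides with $S(x^{(k)})$, and $\bar x$ solves \eqref{ssnge} if and only if $S(\bar x)=0$; this aligns the termination tests of Algorithms \ref{algo:snm_GE} and \ref{algo:snm_GEx} with that of Algorithm \ref{alg:nm} applied to $S$.

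Second, I would identify the face of $\cK_D(\hat d^{(k)},\hat \lambda^{(k)})$ that Algorithm \ref{algo:snm_GE} implicitly uses to select an element of $\cT_S(x^{(k)})$. The columns of $\hat W^{(k)}$ form a basis of $\spa N_D(\hat d^{(k)})=(\spa \bar\cF)^{\perp}$ for the face $\bar\cF:=\lin T_D(\hat d^{(k)})=\lin \cK_D(\hat d^{(k)},\hat \lambda^{(k)})$. Setting $W=\hat W^{(k)}$ in the definition \eqref{partialbs} of $\cT_S$ thus produces an element $V^{(k)}\in \cT_S(x^{(k)})$ whose simplified form is recorded in \eqref{mx}, namely
\begin{equation*}
V^{(k)}=-\Pii_{\cZ}\cL'_{\hat\lambda^{(k)}}(x^{(k)})-\Pii_{\cW}-\nabla G(x^{(k)})\hat W^{(k)}M^{-1}\hat W^{(k)\top}(G'(x^{(k)})\hat u^{(k)})'_x,
\end{equation*}
with $\cW:=\range(\nabla G(x^{(k)})\hat W^{(k)})$, $\cZ:=\cW^{\perp}$, and $M:=\hat W^{(k)\top} G'(x^{(k)})\nabla G(x^{(k)})\hat W^{(k)}$.

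Third, and this is the main technical step, I would match the linear system \eqref{iteration_scheme_snm_GE} with a Newton equation of the form $V\Delta x=-\hat u^{(k)}$ for a $V$ close to $\cT_S(x^{(k)})$. Using the approximation-step identities $\hat u^{(k)}=-\cL_{\hat\lambda^{(k)}}(x^{(k)})$ and $G(x^{(k)})-\hat d^{(k)}=-G'(x^{(k)})\hat u^{(k)}$, the system \eqref{iteration_scheme_snm_GE} can be rewritten as $\hat Z^{(k)\top}\cL'_{\hat\lambda^{(k)}}(x^{(k)})\Delta x^{(k)}=\hat Z^{(k)\top}\hat u^{(k)}$ and $\hat W^{(k)\top}G'(x^{(k)})\Delta x^{(k)}=\hat W^{(k)\top}G'(x^{(k)})\hat u^{(k)}$. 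Since $\hat Z^{(k)}$ is an orthonormal basis of $\cZ=\ker(\hat W^{(k)\top}G'(x^{(k)}))$, and since $\nabla G(x^{(k)})\hat W^{(k)}M^{-1}\hat W^{(k)\top}G'(x^{(k)})$ coincides with the orthogonal projection $\Pii_{\cW}$ (because $\hat W^{(k)\top}G'(x^{(k)})=(\nabla G(x^{(k)})\hat W^{(k)})^{\top}$), these two equations are exactly $\Pii_{\cZ}\cL'_{\hat\lambda^{(k)}}(x^{(k)})\Delta x^{(k)}=\Pii_{\cZ}\hat u^{(k)}$ and $\Pii_{\cW}\Delta x^{(k)}=\Pii_{\cW}\hat u^{(k)}$, which together amount to $\widetilde V^{(k)}\Delta x^{(k)}=-\hat u^{(k)}$ with $\widetilde V^{(k)}:=-\Pii_{\cZ}\cL'_{\hat\lambda^{(k)}}(x^{(k)})-\Pii_{\cW}$. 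Since $\widetilde V^{(k)}-V^{(k)}=\nabla G(x^{(k)})\hat W^{(k)}M^{-1}\hat W^{(k)\top}(G'(x^{(k)})\hat u^{(k)})'_x$ has norm of order $\|\hat u^{(k)}\|$ thanks to the twice continuous differentiability of $G'$ and the uniform boundedness of $M^{-1}$ near $\bar x$ guaranteed by Proposition \ref{propkey}, one obtains $\dist(\widetilde V^{(k)},\cT_S(x^{(k)}))\leq \varrho\|S(x^{(k)})\|$ for some fixed $\varrho>0$. This exhibits Algorithm \ref{algo:snm_GE} as an instance of Algorithm \ref{algo:snm_GEx}, hence of Algorithm \ref{alg:nm}, with this $\varrho$. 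I expect this matching step, in particular the identification of $\Pii_{\cW}$ with the oblique-looking expression and the quantification of the residual as an $O(\|\hat u^{(k)}\|)$ perturbation, to be the main obstacle.

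Finally, I would verify the hypotheses of Theorem \ref{thmgss}: local Lipschitz continuity of $S$ around $\bar x$ and its G-semismoothness with respect to $\cT_S$ at $\bar x$ come from Proposition \ref{Lipschitz_of_S} and Proposition \ref{proppartialb}; $\cT_S(x)$ is compact because it is a finite set, being indexed by the finitely many faces of the polyhedral convex cone $\cK_D(\hat d,\hat \lambda)$; upper semicontinuity of $\cT_S$ at $\bar x$ is part of Proposition \ref{proppartialb}; and nonsingularity of every $V\in \cT_S(\bar x)$ follows from Proposition \ref{propbd} under Assumption \ref{ass_2}. Theorem \ref{thmgss} then delivers the claimed local superlinear convergence of the sequence $\{x^{(k)}\}$ generated by Algorithm \ref{algo:snm_GE}.
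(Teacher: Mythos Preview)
Your proposal is correct and follows essentially the same approach as the paper: pick the face $\bar\cF=\lin T_D(\hat d^{(k)})$, set $\widetilde V^{(k)}=-\Pi_{\cZ}\cL'_{\hat\lambda^{(k)}}(x^{(k)})-\Pi_{\cW}$, show that \eqref{iteration_scheme_snm_GE} is exactly $\widetilde V^{(k)}\Delta x=-\hat u^{(k)}$, bound its distance to $\cT_S(x^{(k)})$ by $\varrho\|\hat u^{(k)}\|$, and invoke \Cref{thmgss} via \Cref{proppartialb} and \Cref{propbd}. One small correction: for the $O(\|\hat u^{(k)}\|)$ bound on $\widetilde V^{(k)}-V^{(k)}$ you only need $G'$ to be once continuously differentiable (which is the standing assumption that $G$ is $C^2$), not twice; the paper in fact remarks after the theorem that this route dispenses with the extra smoothness assumption required in \Cref{polyhedraloriginal}.
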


\begin{proof}{Proof}
Let $\{ x^{(k)}\}$ be the sequence generated by Algorithm \ref{algo:snm_GE}.
For a fixed $\bar k\ge 1$ such that $x^{(\bar k)}$ is well-defined, we assume that $x^{(\bar k-1)}$ is also the output of the final step at the iteration indexed by $(\bar k-1)$ of Algorithm \ref{algo:snm_GEx}.
Then, the first two steps at the iteration indexed by $\bar k$ of Algorithm \ref{algo:snm_GEx} are the same as the first two steps at the iteration indexed by $\bar k$ of Algorithm \ref{algo:snm_GE}.
During the third step of Algorithm \ref{algo:snm_GEx} at iteration $\bar k$,
one can take
\begin{equation*}\cW^{(\bar k)}:=\range(\nabla G( x^{(\bar k)})\hat W^{(\bar k)}),
\quad
\cZ^{(\bar k)}:=(\cW^{(\bar k)})^{\perp}
\quad
\mbox{and}
\quad
V^{(\bar k)}:=
-\Pii_{\cZ^{(\bar k)}} \mathcal{L}'_{\hat \lambda^{(\bar k)}}( x^{(\bar k)})
- \Pii_{\cW^{(\bar k)}},
\end{equation*}
where $\hat W^{(\bar k)}$ comes from the third step of Algorithm
\ref{algo:snm_GE} at the iteration indexed by $\bar k$.
Then, from \eqref{mx} one has
\small
\begin{equation*}
\dist\big(V^{(\bar k)}, \cT_S( x^{(\bar k)})\big)
\le
\|\nabla G( x^{(\bar k)})\hat W^{(\bar k)}
\big [(\hat W^{(\bar k)})^\top G'( {x}^{(\bar k)})\nabla G( x^{(\bar k)})\hat W^{(\bar k)} \big]^{-1}
(\hat W^{(\bar k)}) ^\top (G'( x^{(\bar k)}) \hat u^{(\bar k)})_x'\|,
\end{equation*}
\normalsize
where $ \hat u^{(\bar k)}:=S(x^{(\bar k)})$.
Then, the corresponding $\Delta x^{(\bar k)}$ computed by the final step at the iteration indexed by $\bar k$ of Algorithm \ref{algo:snm_GEx} is the same as the one calculated by \eqref{iteration_scheme_snm_GE} at the iteration indexed by $\bar k$.
Let $\omega$ be the parameter specified in \Cref{propkey}. Recall that for any $\tilde x \in \mathbb{B}_{\omega}(\bar x)$, $V_{\tilde x}$ in \eqref{mx} is independent of the specific choice of the corresponding $\widetilde W$ in \eqref{mx}.
Without loss of generality, one can assume that
$\widetilde W$ in \eqref{mx} is uniformly bounded.
One has $[\widetilde W^\top G'(\tilde x)\nabla G(\tilde x)\widetilde W]^{-1}$ is well-defined and uniformly bounded since the nondegeneracy condition holds and $G$ is continuously differentiable.
Furthermore, since $G'(\cdot)$ is also continuously differentiable, one can get
${\dist}(V^{(\bar k)},\cT_S(x^{(\bar k)}))\le \varrho \|{\hat u^{(\bar k)}}\|$
with
\begin{equation*}
\varrho:=\sup_{\tilde x\in \mathbb{B}_{\omega}(\bar x)}
\left \{
\|\nabla G(\tilde x) \widetilde W
\big [\widetilde W^\top G'(\tilde x)\nabla G(\tilde x)\widetilde W\big]^{-1}
 \widetilde W ^\top G''(\tilde x)\|
\right \}<\infty.
\end{equation*}
Consequently, the iteration sequence $\{ x^{(k)}\}$ generated by Algorithm \ref{algo:snm_GE} can be viewed as the one generated by 
 Algorithm \ref{algo:snm_GEx}. 
 
Hence, if $x^{(0)}$ is sufficiently close to $\bar x$, 
 one has from \Cref{proppartialb} and \Cref{propbd} that the local superlinear convergence of Algorithm \ref{algo:snm_GE} is guaranteed by \Cref{thmgss}.
This completes the proof.
 \end{proof}

\section{Proof of Algorithm \ref{algo:ssnm_GE} as a G-semismooth Newton method}
\label{secscd}
In this section, we show that Algorithm \ref{algo:ssnm_GE} is
also an application of Algorithm \ref{alg:nm}. 
 The methodology developed here also can be used to show that the implementable SCD semismooth* Newton method in the more recent work \cite{gf23} is also a G-semismooth Newton method.
 
We first provide the following two preliminary results.
\begin{lemma}
\label{lem:spq}
Let $q$ be the function in \eqref{gecp} and $\cP_{{\gamma}^{-1}q} $ be the proximal mapping defined by \eqref{proxdef} with $\gamma>0$.
For any $y,z\in\Re^n$ such that $(z;y)\in\gph \cP_{{\gamma}^{-1}q} $, it holds that
\begin{equation}
\label{spq}
\begin{array}{ll}
\{\mathrm{rge}(I;B)\mid B\in\partial_{B}\mathcal{P}_{\gamma^{-1}q}(z)\}
&=
\cS^*_{\cP_{{\gamma}^{-1}q} } (z,y)\\
&=
 
\left\{\left\{(-(e^*+\gamma e);
-\gamma e)
\mid
(e ;e^*)\in
L\right\}
\mid
L \in \cS^*_{\partial q}(y,\gamma(z-y))\right\},
\end{array}
\end{equation}
 
where the definitions of $\cS^*_{\cP_{{\gamma}^{-1}q} }$ and $\cS^*_{\partial q}$ come from \Cref{DefSCDProperty}.
\end{lemma}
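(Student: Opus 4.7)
The plan is to exploit the resolvent identity to build an explicit linear bijection between $\gph \partial q$ and $\gph \cP_{\gamma^{-1}q}$, and then transport the graphical smoothness, the graphical derivative subspace, and the $*$-duality through this bijection. Concretely, $y = \cP_{\gamma^{-1}q}(z)$ is equivalent to $y^* := \gamma(z-y) \in \partial q(y)$, so the linear map
\[
M : \Re^n\times\Re^n \to \Re^n\times\Re^n,\qquad M(y,y^*) := (y+\gamma^{-1}y^*,\ y)
\]
is an invertible bijection from $\gph \partial q$ onto $\gph \cP_{\gamma^{-1}q}$, sending $(y,y^*)$ to $(z,y)$.

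First, I would transport graphical smoothness. Because $M$ is an invertible linear map on $\Re^{2n}$, it commutes with the formation of tangent cones, so $T_{\gph \cP_{\gamma^{-1}q}}(z,y) = M\bigl(T_{\gph \partial q}(y,y^*)\bigr)$. Hence $\partial q$ is graphically smooth of dimension $n$ at $(y,y^*)$ if and only if $\cP_{\gamma^{-1}q}$ is graphically smooth of dimension $n$ at $(z,y)$, and in that case
\[
\gph D\cP_{\gamma^{-1}q}(z,y) \;=\; M\bigl(\gph D\partial q(y,y^*)\bigr).
\]

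Next, I would compute how the $*$-operation in \Cref{DefSCDProperty} transforms under $M$. Using the block representation of $M$, an element $(a^*,b^*) \in (ML)^\perp$ satisfies $M^\top(a^*,b^*) = (a^*+b^*,\gamma^{-1}a^*) \in L^\perp$. Parameterizing $L^\perp$ via $L^*$ (recall $(u^*,v^*) \in L^\perp$ iff $(-v^*,u^*) \in L^*$) and rewriting $(a^*,b^*)$ accordingly yields
\[
(ML)^* \;=\; \bigl\{(e^*+\gamma e;\ \gamma e)\ \big|\ (e;e^*) \in L^*\bigr\},
\]
where potential overall sign flips are absorbed because $L^*$ is a linear subspace and hence invariant under negation. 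This identifies the pointwise action of $M$ on the inner-limit objects $\widehat{\cS}^*$ and establishes \eqref{spq} at each point of graphical smoothness.

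Finally, I would pass to the outer limit defining $\cS^*$. The map $L \mapsto ML$ is a homeomorphism of $\mathds{Z}_n$ in the metric $d_{\mathds{Z}_n}$ (since $M$ is an invertible linear bijection of $\Re^{2n}$, the associated projection operators vary continuously), and the same holds for $L \mapsto L^*$; consequently $L \mapsto (ML)^*$ is a homeomorphism of $\mathds{Z}_n$. Because $M$ also restricts to a homeomorphism $\gph \partial q \to \gph \cP_{\gamma^{-1}q}$, sequences $(z_k,y_k) \to (z,y)$ in $\gph \cP_{\gamma^{-1}q}$ are in bijection with sequences $(y_k,y_k^*) \to (y,y^*)$ in $\gph \partial q$ via $M$. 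Applying the pointwise formula along such sequences and taking outer limits gives the claimed identity for $\cS^*_{\cP_{\gamma^{-1}q}}(z,y)$.

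The main obstacle is the careful bookkeeping in the middle step: keeping straight the definition $L^* = \{(-v^*,u^*) \mid (u^*,v^*) \in L^\perp\}$, accounting for the factor $\gamma^{-1}$ in the second block of $M$ versus the factor $\gamma$ appearing in the stated formula, and verifying that the sign ambiguities indeed vanish because every subspace is closed under negation. Once this linear-algebraic identity is secured, the rest of the argument reduces to the standard continuity of linear operations on the Grassmannian $\mathds{Z}_n$.
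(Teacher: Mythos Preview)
Your approach is correct and is essentially the same idea as the paper's: both exploit the linear bijection between $\gph\partial q$ and $\gph\cP_{\gamma^{-1}q}$ coming from the resolvent identity and then transport $\cS^*$ through it. The difference is one of packaging. The paper writes the bijection as $\phi(z,y)=(y,\gamma(z-y))$ (the inverse of your $M$), checks that $\phi'$ is nonsingular, records the matrix identity
\[
\begin{pmatrix}0&-I\\ I&0\end{pmatrix}\begin{pmatrix}0&\gamma I\\ I&-\gamma I\end{pmatrix}\begin{pmatrix}0&-I\\ I&0\end{pmatrix}\begin{pmatrix}e\\ e^*\end{pmatrix}=\begin{pmatrix}e^*+\gamma e\\ \gamma e\end{pmatrix},
\]
and then invokes \cite[Proposition~3.14]{gfrerer2022local}, which is precisely the general change-of-variables rule for $\cS^*$ under a nonsingular linear transformation of the graph. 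You instead reprove that proposition from scratch: you transport tangent spaces via $M$, compute $(ML)^*$ in terms of $L^*$ by hand, and then pass to the outer limit using continuity of $L\mapsto ML$ and $L\mapsto L^*$ on $\mathds{Z}_n$. One small notational point worth tidying: in your middle step the symbol $L$ denotes the \emph{tangent} subspace $\gph D\partial q(y,y^*)$, so your $L^*$ is what the lemma calls $L\in\cS^*_{\partial q}$; this is consistent but easy to misread. Otherwise the argument is sound, and your self-contained route buys independence from the external reference at the cost of a few lines of linear algebra.
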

\begin{proof}{Proof}
Since $q$ is a closed proper convex function, one has for any $y,z\in\Re^n$,
\[
\label{incluchange}
(z;y)\in\gph \cP_{{\gamma}^{-1}q}
\quad
\Leftrightarrow
\quad
\gamma (z-y)\in\partial q(y)
\quad
\Leftrightarrow
\quad
\phi
(z,y):=
(y;\gamma (z-y))\in\gph\partial q.
\]
Meanwhile, one has $\phi'(z,y)=
\begin{pmatrix}
0& I
\\
\gamma I& -\gamma I
\end{pmatrix}$, which is nonsingular for all $\gamma>0$.
Note that
\begin{equation*}
\begin{pmatrix}
0&-I
\\
I& 0
\end{pmatrix}
\begin{pmatrix}
0& \gamma I
\\
I& -\gamma I
\end{pmatrix}
\begin{pmatrix}
0&-I
\\
I& 0
\end{pmatrix}^{ \top}
\begin{pmatrix}
e
\\
e^*
\end{pmatrix}
=
\begin{pmatrix}
-(e^*+\gamma e)
\\
-\gamma e
\end{pmatrix}.
\end{equation*}
Thus from \cite[ Lemma 3.11 and Proposition 3.14]{gfrerer2022local} and 
\cite[Theorem 13.52]{rock1998} one gets \eqref{spq}.
 \end{proof}

\begin{lemma}
\label{prop:uineq}
Let $u_{\gamma}$ be the function defined in \eqref{proximal} with $\gamma>0$.
Suppose that $\bar x$ is a solution to the GE \eqref{gecp}.
For any $ x\in\Re^n$, by setting
$ u:=u_\gamma( x)$, $ z:= x-{\gamma}^{-1}F( x)$ and
$\bar z:=\bar x-{\gamma}^{-1}F(\bar x)$, one has
\begin{equation*}
\| u+ x-\bar x\|^2
+\|F(\bar x)-F( x)-\gamma u\|^2
\le
\max\{1,\gamma^2\}
\| z-\bar z \|^2.
\end{equation*}
\end{lemma}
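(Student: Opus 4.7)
The plan is to rewrite both terms on the left-hand side as simple expressions in the proximal map $P := \cP_{\gamma^{-1}q}$ evaluated at $z$ and $\bar z$, and then invoke firm nonexpansiveness of $P$.

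First I would note that $\bar x$ solving \eqref{gecp} is equivalent to $-F(\bar x)\in\partial q(\bar x)$, which by \eqref{proxdef} and \eqref{incluchange} means $\bar x = P(\bar z)$ with $\bar z = \bar x - \gamma^{-1}F(\bar x)$. Likewise, by definition of $u_\gamma$, $x+u = P(z)$ with $z = x - \gamma^{-1}F(x)$. Therefore
\[
u + x - \bar x \;=\; P(z) - P(\bar z).
\]
For the second term, I would use $\gamma u = \gamma P(z) - \gamma x$ together with $\gamma x - F(x) = \gamma z$ and $\gamma \bar x - F(\bar x) = \gamma \bar z$ to compute
\[
F(\bar x) - F(x) - \gamma u \;=\; \gamma\bigl[(z-\bar z) - (P(z)-P(\bar z))\bigr].
\]

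Setting $a := P(z)-P(\bar z)$ and $b := (z-\bar z) - (P(z)-P(\bar z))$, the desired inequality becomes
\[
\|a\|^2 + \gamma^2 \|b\|^2 \;\le\; \max\{1,\gamma^2\}\,\|a+b\|^2.
\]
The next (and essentially only substantive) ingredient is firm nonexpansiveness of the proximal mapping of the maximal monotone operator $\partial q$, which gives $\langle P(z)-P(\bar z),\, z-\bar z\rangle \ge \|P(z)-P(\bar z)\|^2$, i.e.\ $\langle a, b\rangle \ge 0$.

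I would finish by splitting into two cases. When $\gamma\le 1$, expand $\|a+b\|^2 = \|a\|^2 + 2\langle a,b\rangle + \|b\|^2$, so that
\[
\max\{1,\gamma^2\}\,\|a+b\|^2 - \|a\|^2 - \gamma^2\|b\|^2 \;=\; 2\langle a,b\rangle + (1-\gamma^2)\|b\|^2 \;\ge\; 0.
\]
When $\gamma>1$, the same expansion gives
\[
\gamma^2\|a+b\|^2 - \|a\|^2 - \gamma^2\|b\|^2 \;=\; (\gamma^2-1)\|a\|^2 + 2\gamma^2\langle a,b\rangle \;\ge\; 0.
\]
Both cases use only $\langle a,b\rangle\ge 0$, which is the main (and only non-trivial) obstacle; once the algebraic identities above are in hand, the estimate is immediate. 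No smoothness of $F$ is needed.
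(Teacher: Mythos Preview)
Your proof is correct and essentially identical to the paper's: both rewrite the two terms as $\|P(z)-P(\bar z)\|^2$ and $\gamma^2\|(z-\bar z)-(P(z)-P(\bar z))\|^2$ and then use firm nonexpansiveness of the proximal map. The paper cites Rockafellar's inequality $\|a\|^2+\|b\|^2\le\|a+b\|^2$ (equivalent to your $\langle a,b\rangle\ge0$) and bounds $\|a\|^2+\gamma^2\|b\|^2\le\max\{1,\gamma^2\}(\|a\|^2+\|b\|^2)$ in one line, while you expand $\|a+b\|^2$ and split into cases; the substance is the same.
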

\begin{proof}{Proof}
Note that $ u+ x=\cP_{{\gamma}^{-1}q} ( z)$.
One has 
$(u+ x; -F( x)-\gamma u)
=
\big( u+ x; \gamma( z-( u+ x))\big)\in\gph \partial q$ from 
\eqref{incluchange}.
Moreover, one has
$\bar x=\cP_{{\gamma}^{-1}q} (\bar z)$, so that
\begin{equation*}
\begin{array}{ll}
\| u+ x-\bar x\|^2
+\| F(\bar x) -F( x)-\gamma u\|^2
\\[1mm]
=
\|\cP_{{\gamma}^{-1}q} ( z)
-\cP_{{\gamma}^{-1}q} (\bar z)\|^2
+\| F(\bar x)-F( x)-\gamma (\cP_{{\gamma}^{-1}q} ( z)- x)\|^2
\\[1mm]
=
\|\cP_{{\gamma}^{-1}q} ( z)
-\cP_{{\gamma}^{-1}q} (\bar z)\|^2
+\| F(\bar x)-\gamma \bar x+\gamma x -F( x)-\gamma (\cP_{{\gamma}^{-1}q} ( z)-\bar x)\|^2
\\[1mm]
=
\|\cP_{{\gamma}^{-1}q} ( z)
-\cP_{{\gamma}^{-1}q} (\bar z)\|^2
+\gamma^2\| z-\bar z
-(\cP_{{\gamma}^{-1}q} ( z)-\cP_{{\gamma}^{-1}q} (\bar z))
\|^2
\le
\max\{1,\gamma^2\}
\| z-\bar z \|^2,
\end{array}
\end{equation*}
where the last inequality comes from \cite[Proposition 1(c)]{rtrmonotone}. This completes the proof.
 \end{proof}

\subsection{Lipschitz continuous localization and G-semismoothness}

Recall that the GE in \eqref{gecp} is equivalent to the nonsmooth equation $u_{\gamma}(x)=0$ (for any $\gamma>0$) with $u_{\gamma}$ in \eqref{proximal}. 
Let $U$ be a neighborhood of $\bar x$ such that $F$ is Lipschitz continuous on it with modulus $\ell>0$.
According to \cite[eq. (5.13)]{Gfrerer222} one has
$$
\begin{array}{l}
\|u_{\gamma} (x)-u_\gamma (x')\|\le 2\|x-x'\|+{\gamma}^{-1}\|F(x)-F(x')\|
\le \big(2+\frac{\ell}{\gamma}\big)\|x-x'\|\quad\forall x,x'\in U.
\end{array}
$$
Therefore,
for any $\gamma>0$,
the function $u_\gamma(\cdot)$ is Lipschitz continuous on $U$.
It is not easy to calculate the Bouligand subdifferential
of $u_\gamma(\cdot)$ although locally it is almost everywhere differentiable.
Instead, it is more reasonable to consider using the composite mapping $\cT_{u_{\gamma}}:\Re^n\rightrightarrows \Re^{n\times n}$ defined by
\[
\label{surjacobian}
\begin{array}{ll}
\cT_{u_{\gamma}}(x)=\partial_B \cP_{{\gamma}^{-1}q} (\cdot)\mid_{x-{\gamma}^{-1}F(x)}\cdot \left(I-{\gamma}^{-1} F'(x)\right)-I.
\end{array}
\]
Note that for any $\gamma>0$ the mapping $\cT_{u_{\gamma}}(\cdot)$ defined in \eqref{surjacobian} is outer semicontinuous at $\bar x$.

Next, we show that $u_\gamma$ ($\gamma>0$)
defined in \eqref{proximal} is G-semismooth at a solution $\bar x$ to the GE \eqref{gecp} with respect to $\cT_{u_{\gamma}}$ defined by \eqref{surjacobian}. For convenience of comparison, we take identical values for all parameters here to those used in the conditions in \cite{Gfrerer222}.
\begin{proposition}
\label{ssbound1}
Let $\bar x$ be a solution to the GE \eqref{gecp}, and
$\mathbb B_{r}(\bar x)$ be the ball such that $F$ is Lipschitz continuous on it with modulus $\ell\ge 0$.
For any $\epsilon>0$, let $\delta$ and $\delta_q$ be two positive constants (depending on $\epsilon$) such that
$\delta\le \min\{\frac{\delta_q}{1+\ell}, r\}$,
\[
\label{scccite}
\begin{array}{rr}
|\langle e^*, d-\bar x\rangle
-\langle e, d^*+F(\bar x)\rangle|
\le
\frac{\epsilon}{2\sqrt{2}(\ell+1)}\|(e;e^*)\|\|(d-\bar x; d^*+F(\bar x))\|
~~~~
\\[2mm]
\forall (d;d^*)\in\gph\partial q\cap
\mathbb B_{\delta_q}(\bar x; -F(\bar x)),\quad \forall (e;e^*)\in L\in \cS^{*}_{\partial q}(d,d^*),
\end{array}
\]
and
\[
\label{ftylor}
\begin{array}{ll}
\|
F(x)-F(\bar x)- F'(x)(x-\bar x)
\|
\le \frac{\epsilon}{2\sqrt{2}(\ell+1)}\|x-\bar x\|\quad\forall x\in \mathbb B_{\delta}(\bar x).
\end{array}
\]
Then, for any $\hat x\in \Re^n$ satisfying
$\|\hat x-\bar x\|\le
\min\big\{\frac{\min\{\delta_q,r\}}{(1+\frac{\ell}{\gamma})\max\{1,\gamma\}},\delta\big\}$ the following results hold:
\begin{description}
\item[(a)]
For any $v\in\Re^n$ and any $B \in \partial_B \cP_{{\gamma}^{-1}q} (\hat x-{\gamma}^{-1}F(\hat x))$, one has
\[
\label{scd_needed}
\begin{array}{l}
\left| \langle \gamma v,u_\gamma(\hat x)\rangle -
\left(\langle \gamma B v, (I-{\gamma}^{-1} F'(\hat x ))(\hat x- \bar{x})\rangle-\langle \gamma v,\hat x-{\bar x}\rangle\right)
\right|
\\[1mm]
\leq \frac{\epsilon}{2\sqrt{2}(l+1)}
\big(\|( B v; \gamma( I-B)v)\|
\max\{1,\gamma\}(1+\frac{\ell}{\gamma})
+\|Bv\|\big)\|\hat x-\bar x\|.
\end{array}
\]
\item[(b)]
For any $B\in \partial_B \cP_{{\gamma}^{-1}q} (\hat x-{\gamma}^{-1}F(\hat x))$ such that $C : =B(I-{\gamma}^{-1} F'(\hat x))-I\in \cT_{u_\gamma}(\hat x)$ is nonsingular, by taking
$ M := \gamma C^\top $ one has
$$
\begin{array}{ll}
\vert| C^{-1}u_\gamma(\hat x) - (\hat x-\bar{x})|\vert
\le \frac{\epsilon}{2\sqrt{2}(\ell+1)}
\big(
\max\{1,\gamma\}(1+\frac{\ell}{\gamma})\|( B {M}^{-1}; \gamma( I-B){M}^{-1})\|_F
+\|B{M}^{-1}\|_F
\big)
\|\hat x-\bar x\|.
\end{array}
$$

\end{description}
\end{proposition}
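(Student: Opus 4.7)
The plan is to reduce both parts to a single SCD-semismooth$^*$ duality inequality for $\partial q$, transferred to $\cP_{\gamma^{-1}q}$ via \Cref{lem:spq} and corrected by the first-order Taylor remainder of $F$ supplied by \eqref{ftylor}. To set up, let $\hat u := u_\gamma(\hat x)$, $\hat z := \hat x-\gamma^{-1}F(\hat x)$, and $\bar z := \bar x-\gamma^{-1}F(\bar x)$. Since $\bar x$ solves \eqref{gecp}, one has $\cP_{\gamma^{-1}q}(\bar z)=\bar x$ and $\cP_{\gamma^{-1}q}(\hat z)=\hat x+\hat u$, and hence $(d,d^*):=(\hat x+\hat u,-F(\hat x)-\gamma\hat u)\in\gph\partial q$. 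For any $B\in\partial_B\cP_{\gamma^{-1}q}(\hat z)$, $B$ is a limit of Jacobians of $\cP_{\gamma^{-1}q}$ at nearby differentiable points; because the proximal mapping of a convex function has symmetric derivative, each such Jacobian $B'$ is symmetric and the associated SCD subspace is the self-dual $\range(I;B')$. Passing to the outer limit places $\range(I;B)\in\cS^*_{\cP_{\gamma^{-1}q}}(\hat z,\hat x+\hat u)$, and \Cref{lem:spq} then supplies a unique $L\in\cS^*_{\partial q}(d,d^*)$ whose image under the transformation in \eqref{spq} equals $\range(I;B)$; matching the parameterizations yields $L=\range(B/\gamma;\,I-B)$.

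For part (a), choose $(e;e^*):=(Bv/\gamma;(I-B)v)\in L$ and, using $B=B^\top$, compute directly that the expression inside the absolute value on the left of \eqref{scd_needed} equals
\begin{equation*}
\gamma\bigl[\langle e^*,d-\bar x\rangle-\langle e,d^*+F(\bar x)\rangle\bigr]+\langle Bv,\,F'(\hat x)(\hat x-\bar x)-F(\hat x)+F(\bar x)\rangle.
\end{equation*}
The second summand is controlled by $\|Bv\|\cdot\tfrac{\epsilon}{2\sqrt{2}(\ell+1)}\|\hat x-\bar x\|$ via \eqref{ftylor}. For the first, \Cref{prop:uineq} together with $\|\hat z-\bar z\|\le(1+\ell/\gamma)\|\hat x-\bar x\|$ yields $\|(d-\bar x,d^*+F(\bar x))\|\le\max\{1,\gamma\}(1+\ell/\gamma)\|\hat x-\bar x\|$, and the imposed bound on $\|\hat x-\bar x\|$ places this pair in $\mathbb{B}_{\delta_q}(\bar x,-F(\bar x))$, activating \eqref{scccite}; since $\|(e,e^*)\|=\gamma^{-1}\|(Bv;\gamma(I-B)v)\|$, applying \eqref{scccite} produces exactly the first summand on the right of \eqref{scd_needed}. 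A triangle inequality then closes (a). The main difficulty lies precisely here: correctly reading off $L$ from $B$ via \Cref{lem:spq} and spotting the exact cancellation whereby $\gamma$ times the SCD duality, augmented by a single Taylor remainder in $F$, reproduces the target expression.

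For part (b), set $v_i:=M^{-1}e_i$ for $i=1,\ldots,n$. The identity $M^\top C^{-1}=\gamma C\,C^{-1}=\gamma I$ combined with $B=B^\top$ rewrites
\begin{equation*}
\langle e_i,\,C^{-1}u_\gamma(\hat x)-(\hat x-\bar x)\rangle=\langle\gamma v_i,u_\gamma(\hat x)\rangle-\langle\gamma Bv_i,(I-\gamma^{-1}F'(\hat x))(\hat x-\bar x)\rangle+\langle\gamma v_i,\hat x-\bar x\rangle,
\end{equation*}
which is precisely the expression inside the absolute value in \eqref{scd_needed} at $v=v_i$. Applying (a) to each $i$, squaring, summing over $i$, and using $\sqrt{\sum_i(a_i+b_i)^2}\le\sqrt{\sum_i a_i^2}+\sqrt{\sum_i b_i^2}$ reassembles $\sqrt{\sum_i\|Bv_i\|^2}=\|BM^{-1}\|_F$ and the analogous Frobenius norm for the other block, delivering the stated bound in (b).
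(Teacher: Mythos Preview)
Your proposal is correct and follows essentially the same route as the paper's proof: identify $(d,d^*)=(\hat x+\hat u,-F(\hat x)-\gamma\hat u)\in\gph\partial q$, use \Cref{prop:uineq} with the Lipschitz bound on $F$ to place it in $\mathbb{B}_{\delta_q}(\bar x,-F(\bar x))$, read off $L\in\cS^*_{\partial q}(d,d^*)$ from $B\in\partial_B\cP_{\gamma^{-1}q}(\hat z)$ via \Cref{lem:spq}, split the target expression into the SCD-semismooth$^*$ term controlled by \eqref{scccite} and the Taylor remainder controlled by \eqref{ftylor}, and for (b) insert $v_i=M^{-1}e_i$ and sum. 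The only cosmetic difference is that you parameterize $L$ as $\range(B/\gamma;\,I-B)$ with $(e;e^*)=(Bv/\gamma;(I-B)v)$, whereas the paper uses the rescaled $(e;e^*)=(Bv;\gamma(I-B)v)$; the factor $\gamma^{-1}$ you pick up in $\|(e,e^*)\|$ is exactly cancelled by the $\gamma$ you multiply through, so the bounds coincide. Your justification that $\range(I;B)\in\cS^*_{\cP_{\gamma^{-1}q}}(\hat z,\hat x+\hat u)$ via symmetry of the prox Jacobian and outer limits is the content of \cite[Lemma~3.11]{gfrerer2022local}, which the paper simply cites.
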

\begin{proof}{Proof}
{\bf (a)}
Note that whenever $\hat x$ satisfies 
$\|\hat x-\bar x\|\le \frac{\min\{\delta_q,r\}}{(1+\frac{l}{\gamma})\max\{1,\gamma\}}$, 
one can get from \Cref{prop:uineq} that
\begin{equation*}
\begin{array}{l}
\|
(\hat y-\bar x;\gamma(\hat z-\hat y)+F(\bar x))
\|
=
\|(\hat u+\hat x-\bar x; F(\bar x) -F(\hat x)-\gamma \hat u)\|
\\[1mm]
\le \max\{1,\gamma\} \|\hat z- \bar z\|
\le \max\{1,\gamma\}(1+\frac{\ell}{\gamma})\|\hat x-\bar x\|
\le \min\{\delta_q,r\},
\end{array}
\end{equation*}
where $\hat u:=u_\gamma(\hat x)$, $\hat z:=\hat x-{\gamma}^{-1}F(\hat x)$,
$\hat y:=\hat u+\hat x=\cP_{{\gamma}^{-1}q} (\hat z)$,
and
$\bar z:=\bar x-{\gamma}^{-1}F(\bar x)$.
Therefore, whenever $\hat x$ satisfies $\|\hat x-\bar x\|\le \frac{\min\{\delta_q,r\}}{(1+\frac{l}{\gamma})\max\{1,\gamma\}}$, one can take
$(d;d^*)=(\hat y;\gamma(\hat z-\hat y))
\in\gph\partial q\cap \mathbb B_{\delta_q}(\bar x, -F(\bar x))$ in \eqref{scccite}
such that for all $(e;e^*)\in L\in \cS^{*}_{\partial q}(d,d^*)$,
\[
\label{scccite2}
\begin{array}{ll}
|\langle \gamma e, \hat z-\bar z\rangle
-\langle e^*+\gamma e, \hat y-\bar x \rangle |
\\[1mm]
=
|\langle e^*+\gamma e, \hat y-\bar x\rangle
-\langle \gamma e, \hat z-\bar x + \frac{1}{\gamma } F(\bar x)\rangle|
=|\langle e^*, \hat y-\bar x\rangle
-\langle e, \gamma(\hat z-\hat y)+F(\bar x)\rangle|
\\[1mm]
\le
\frac{\epsilon}{2\sqrt{2}(\ell+1)}\|(e;e^*)\|\|(\hat y-\bar x; \gamma(\hat z-\hat y)+F(\bar x))\|
\le
\frac{\epsilon}{2\sqrt{2}(\ell+1)}\|(e;e^*)\| \max\{1,\gamma\}(1+\frac{\ell}{\gamma})\|\hat x-\bar x\|.
\end{array}
\]
Also, by using \eqref{spq} of \Cref{lem:spq} one can get
 
$$
\cS^{*}_{\partial q}(\hat y,\gamma(\hat z-\hat y))=\{\range (-{\gamma}^{-1}B;-(I-B))\mid B\in\partial_B\cP_{{\gamma}^{-1}q} (\hat z) \}
=\{ \range(B;\gamma(I-B))\mid B\in\partial_B\cP_{{\gamma}^{-1}q} (\hat z) \}.
$$
 
Thus, $(e ;e^*)\in L$ if and only if $(e ;e^*)=(B v;\gamma( I-B)v)$ for some $v\in\Re^n$, where $B\in \partial_B \cP_{{\gamma}^{-1}q} (\hat z)$ is the symmetric positive semidefinite $n\times n$ matrix such that 
$L=\range(B;\gamma (I-B))$. 
 Moreover, in this case, one has from \eqref{spq} that
$
(e^*+\gamma e;
\gamma e)=(\gamma( I-B)v+\gamma B v; \gamma B v )
=
\gamma(v; B v )\in \cS^{*}_{\cP_{{\gamma}^{-1}q} }(z,y)$.

Therefore, if $
\hat x$ satisfies $\|\hat x-\bar x\|\le \frac{\min\{\delta_q,r\}}{(1+\frac{l}{\gamma})\max\{1,\gamma\}}$, one can obtain from \eqref{scccite2} that for any $v\in\Re^n$ and any $B\in \partial_B \cP_{{\gamma}^{-1}q} (\hat x-{\gamma}^{-1}F(\hat x))$ it holds that
\[
\label{Gss_1}
\begin{array}{ll}
|\langle \gamma Bv, \hat z-\bar z\rangle
-\langle \gamma v, \hat y-\bar x \rangle |
\le
\frac{\epsilon}{2\sqrt{2}(\ell+1)}\|(B v;\gamma( I-B)v)\|\max\{1,\gamma\}(1+\frac{\ell}{\gamma})\|\hat x-\bar x\|.
\end{array}
\]
Note that both \eqref{ftylor} and \eqref{Gss_1} hold if $\hat x$ satisfies
$\|\hat x-\bar x\|\le
\min\big\{\frac{\min\{\delta_q,r\}}{(1+\frac{\ell}{\gamma})\max\{1,\gamma\}},\delta\big\}
\le
\min\big\{\frac{\delta_q}{1+\ell}, r\big\}$.
In this case, one has
$$
\begin{array}{ll}
\left| \langle \gamma v,u_\gamma(\hat x)\rangle -
\left(\langle \gamma B v,(I-{\gamma}^{-1} F'(\hat x ))(\hat x- \bar{x})\rangle-\langle \gamma v,\hat x-{\bar x}\rangle\right)
\right|
\\[1mm]
=\vert \langle \gamma v,\hat u\rangle
+ \langle \gamma v,\hat x-{\bar x}\rangle
- \langle \gamma Bv,\hat x- \bar{x}
-{\gamma}^{-1}(F(\hat x)- F(\bar{x}))
-{\gamma}^{-1}F'(\hat x)(\hat x-\bar x)
+{\gamma}^{-1}(F(\hat x)- F(\bar{x}))\rangle|
\\[1mm]
\leq\vert \langle \gamma v,\hat y-{\bar x}\rangle-
\langle \gamma Bv, \hat z- \bar{z}\rangle\vert
+ |\langle B v, F(\hat x)-F(\bar{x})- F'(\hat x)(\hat x-\bar{x}) \rangle|
\\[1mm]
\leq \frac{\epsilon}{2\sqrt{2}(\ell+1)}
\|( B v;\gamma( I-B)v)\|
\max\{1,\gamma\}(1+\frac{\ell}{\gamma})\|\hat x-\bar x\|+ \frac{\epsilon}{2\sqrt{2}(\ell+1)}\|Bv\|\|\hat x-\bar x\|,
\end{array}
$$
\normalsize
which completes the proof of (a).

{\bf(b)}
When $C$ is not singular, we can take $v_i$ in \eqref{scd_needed} as the $i$-th column of $M^{-1}=(\gamma C^\top )^{-1}$ for $i=1,...,n$, that is,
$\gamma v_i^\top $ is the $i$-th row of $C^{-1}$. Consequently,
\begin{equation*}
\begin{array}{l}
\|C^{-1} u_\gamma(\hat x) - (\hat x-\bar{x})\|
\\ 
=
\| C^{-1}u_\gamma(\hat x)-C^{-1}B(I- {\gamma}^{-1} F'(\hat x))(\hat x- \bar{x})+C^{-1}(\hat x-{\bar x})\|
\\ 
=\big(
\sum_{i=1}^n\left| \langle \gamma v_i,u_\gamma(\hat x)\rangle -
\left(\langle \gamma B v_i, (I-{\gamma}^{-1} F'(\hat x ))(\hat x- \bar{x})\rangle-\langle \gamma v_i,\hat x-{\bar x}\rangle\right)
\right|^2\big)^{\frac{1}{2}}
\\ 
\le
\frac{\epsilon}{2\sqrt{2}(\ell+1)}
\Big(\sum_{i=1}^n
\left(
\max\{1,\gamma\}(1+\frac{\ell}{\gamma})
\|( B v_i; \gamma( I-B)v_i)\|
+\|Bv_i\|\right)^2
\Big)^{\frac{1}{2}}
\|\hat x-\bar x\|
\\ 
\le
\frac{\epsilon}{2\sqrt{2}(\ell+1)}
\big(
\max\{1,\gamma\}(1+\frac{\ell}{\gamma})\big(\sum_{i=1}^n
\|( B v_i; \gamma( I-B)v_i)\|^2
\big)^{\frac{1}{2}}
+\big(\sum_{i=1}^n\|Bv_i\|^2 \big)^{\frac{1}{2}}
\big)
\|\hat x-\bar x\|
\\[1mm]
=
\frac{\epsilon}{2\sqrt{2}(\ell+1)}
\big(
\max\{1,\gamma\}(1+\frac{\ell}{\gamma})\|( B {M}^{-1}; \gamma( I-B){M}^{-1})\|_F
+\|B{M}^{-1}\|_F
\big)
\|\hat x-\bar x\|,
\end{array}
\end{equation*}
where the last inequality comes from the triangle inequality and the final equality comes from the definition of the Frobenius norm. This completes the proof of (b).
 \end{proof}

The G-semismoothness of $u_\gamma$ with respect to $\cT_{u_\gamma}$ is given as follows.

\begin{corollary}
Let $\bar x$ be a solution to the GE \eqref{gecp}, and
$\mathbb B_{r}(\bar x)$ be the ball such that $F$ is Lipschitz continuous on it with modulus $\ell\ge 0$.
Assume that $\partial q$ is SCD semismooth$^*$ at $(\bar{x},-F(\bar{x}))$.
For any $\gamma>0$ the mapping $u_\gamma$ defined in \eqref{proximal} is G-semismooth with respect to $\cT_{u_\gamma }$ given in \eqref{surjacobian} at $\bar x$.
\end{corollary}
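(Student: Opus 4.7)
Since $\bar x$ solves \eqref{gecp}, we have $u_\gamma(\bar x)=0$, so G-semismoothness of $u_\gamma$ at $\bar x$ with respect to $\cT_{u_\gamma}$ reduces to showing that, for every sequence $h\to 0$ and every $C\in\cT_{u_\gamma}(\bar x+h)$, one has $\|u_\gamma(\bar x+h)-Ch\|=o(\|h\|)$. Set $\hat x:=\bar x+h$ and write $C=B(I-\gamma^{-1}F'(\hat x))-I$ with $B\in\partial_B\cP_{\gamma^{-1}q}(\hat x-\gamma^{-1}F(\hat x))$. The plan is to exploit the pointwise inequality \eqref{scd_needed} of \Cref{ssbound1}(a) by testing it against the standard basis and then assembling the bound on $\|u_\gamma(\hat x)-Ch\|$ via the Frobenius norm, in the same spirit as part (b) of that proposition but without invoking invertibility of $C$.

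First, since $B$ is symmetric (as $B\in\partial_B\cP_{\gamma^{-1}q}$ by \cite[Lemma 3.11]{gfrerer2022local}), a direct expansion gives
\[
\langle \gamma v,Ch\rangle=\langle \gamma Bv,(I-\gamma^{-1}F'(\hat x))h\rangle-\langle \gamma v,h\rangle,
\]
so \Cref{ssbound1}(a) with any fixed $\epsilon>0$ rewrites as
\[
|\langle \gamma v,u_\gamma(\hat x)-Ch\rangle|\le \tfrac{\epsilon}{2\sqrt2(\ell+1)}\bigl(\|(Bv;\gamma(I-B)v)\|\max\{1,\gamma\}(1+\tfrac{\ell}{\gamma})+\|Bv\|\bigr)\|h\|,
\]
provided $\|h\|$ is below the threshold specified in \Cref{ssbound1}. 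Taking $v=e_i/\gamma$ for $i=1,\dots,n$, squaring, summing, using $(a+b)^2\le 2(a^2+b^2)$, and recognising $\sum_i\|Be_i\|^2=\|B\|_F^2$ and $\sum_i\|(Be_i;\gamma(I-B)e_i)\|^2=\|B\|_F^2+\gamma^2\|I-B\|_F^2$ yields
\[
\|u_\gamma(\hat x)-Ch\|^2\le \tfrac{\epsilon^2}{2(\ell+1)^2\gamma^2}\Bigl(\max\{1,\gamma\}^2(1+\tfrac{\ell}{\gamma})^2(\|B\|_F^2+\gamma^2\|I-B\|_F^2)+\|B\|_F^2\Bigr)\|h\|^2.
\]
Because $B$ is a symmetric PSD matrix whose eigenvalues lie in $[0,1]$ (the Bouligand Jacobian of a firmly non-expansive proximal map), both $\|B\|_F$ and $\|I-B\|_F$ are bounded by $\sqrt n$, so the bracketed quantity is dominated by a constant $K_{\gamma,\ell,n}$ depending only on $\gamma,\ell,n$ and in particular independent of $\epsilon$, of $h$, and of the specific choice of $B\in\partial_B\cP_{\gamma^{-1}q}(\hat x-\gamma^{-1}F(\hat x))$.

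Hence $\|u_\gamma(\hat x)-Ch\|\le K_{\gamma,\ell,n}\,\epsilon\,\|h\|$ uniformly over $C\in\cT_{u_\gamma}(\hat x)$ whenever $\|h\|$ is small enough. Since $\epsilon>0$ is arbitrary (the admissible neighbourhood radius in \Cref{ssbound1} shrinks with $\epsilon$ through $\delta,\delta_q$, which exist by the SCD semismoothness$^*$ of $\partial q$ at $(\bar x,-F(\bar x))$ and the continuous differentiability of $F$), sending $\epsilon\downarrow 0$ gives $\|u_\gamma(\bar x+h)-Ch\|=o(\|h\|)$ uniformly in $C\in\cT_{u_\gamma}(\bar x+h)$, which is exactly \Cref{defgss}. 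The only delicate point is to argue that the constant $K_{\gamma,\ell,n}$ is truly uniform in $B$; this is ensured by the spectral bound on elements of $\partial_B\cP_{\gamma^{-1}q}$, which is the main technical ingredient of the argument.
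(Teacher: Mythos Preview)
Your proof is correct and follows essentially the same route as the paper: both invoke \Cref{ssbound1}(a), use the symmetry of $B$ to rewrite the left side as $|\langle \gamma v,\,u_\gamma(\hat x)-Ch\rangle|$, and then bound the right side uniformly in $B$ via the spectral fact that $B\in\partial_B\cP_{\gamma^{-1}q}$ has eigenvalues in $[0,1]$. The only difference is in how the scalar inequality is upgraded to a norm bound: you test against the standard basis and assemble a Frobenius-norm estimate (picking up a harmless factor of $\sqrt n$), whereas the paper takes the single unit vector $v$ with $\langle \gamma v,\,u_\gamma(\hat x)-Ch\rangle=\|u_\gamma(\hat x)-Ch\|$ and uses the pointwise bound $\|(Bv;\gamma(I-B)v)\|\le\max\{1,\gamma\}\|v\|$ from firm nonexpansiveness, yielding a slightly cleaner constant with no dependence on $n$.
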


\begin{proof}{Proof}
Let $\epsilon>0$ be arbitrarily given.
Note that one can find two positive constants
$\delta$ and $\delta_q$ with
$\delta\le \min\{\frac{\delta_q}{1+\ell}, r\}$
such that \eqref{scccite} and \eqref{ftylor} hold.
Then by \Cref{ssbound1}(a) we know that
for any $\hat x\in \Re^n$ satisfying
$\|\hat x-\bar x\|\le \min\big\{\frac{\min\{\delta_q,r\}}{(1+\frac{\ell}{\gamma})\max\{1,\gamma\}},\delta\big\}$,
it holds for any $v\in\Re^n$ and any $B\in \partial_B \cP_{{\gamma}^{-1}q} (\hat x-{\gamma}^{-1}F(\hat x))$ that
\begin{equation*}
\begin{array}{l}
\left| \langle \gamma v,u_\gamma(\hat x)\rangle -
\left(\langle \gamma B v, (I-{\gamma}^{-1} F'(\hat x ))(\hat x- \bar{x})\rangle-\langle \gamma v,\hat x-{\bar x}\rangle\right)
\right|
\\[1mm]
\leq \frac{\epsilon}{2\sqrt{2}(\ell+1)}
\big(\|( B v; \gamma( I-B)v)\|
\max\{1,\gamma\}(1+\frac{\ell}{\gamma})
+\|Bv\|\big)\|\hat x-\bar x\|.
\end{array}
\end{equation*}
Since $B$ is a firmly nonexpansive mapping by \cite[Proposition 3.22]{gfrerer2022local}, one can get that
$\|( B v; \gamma( I-B)v)\| \leq \max\{1,\gamma\}\| v\|$.
Therefore, by taking $v$ as the vector such that $\|\gamma v\|=1$ and
\begin{equation*}
\begin{array}{l}
\langle \gamma v, u_\gamma(\hat x)-(B(I-{\gamma}^{-1} F'(\hat x ))-I)(\hat x- \bar{x}) \rangle = \|u_\gamma(\hat x)-(B(I-{\gamma}^{-1} F'(\hat x ))-I)(\hat x- \bar{x})\|,
\end{array}
\end{equation*}
one can get $\|Bv\|\le \max\{1,\frac{1}{\gamma}\}$ and 
\begin{equation*}
\begin{array}{ll}
\|u_\gamma(\hat x)-(B(I-{\gamma}^{-1} F'(\hat x ))-I)(\hat x- \bar{x})\|
&\leq
\frac{\epsilon}{2\sqrt{2}(\ell+1)}
\big(\max\{1,\gamma^2\}\| v\|(1+\frac{\ell}{\gamma})
+\|Bv\|\big)\|\hat x-\bar x\|.
\end{array}
\end{equation*}
Consequently, $u_\gamma$ is G-semismooth with respect to
$\cT_{u_\gamma}$, and this completes the proof.
 \end{proof}

\subsection{Regularity conditions}
When using $\cT_{u_\gamma}$ in \eqref{surjacobian} as a generalized Jacobian in a G-semismooth Newton method for solving the nonsmooth equation $u_\gamma(x)=0$,
conditions for ensuring $\cT_{u_\gamma}$ being nonsingular around a reference solution should be verified.
On the other hand, Algorithm \ref{algo:ssnm_GE} is well-defined only if the coefficient matrix of the linear equation \eqref{iteration} is nonsingular.
In fact, we have the following results on the equivalence between the two regularity conditions mentioned above.
\begin{lemma}
\label{nonsingularequiv}
Let $\gamma>0$ and $u_{\gamma}$ be the function defined in \eqref{proximal}.
For any $ x\in\Re^n$,
every element of $\cT_{u_{\gamma}}( x)$ is nonsingular if and only if
${(Y^{*}}^\top F'( x)+{X^{*}}^\top )$ is nonsingular for all $X^*\in\Re^{n\times n}$ and $Y^*\in\Re^{n\times n}$ such that
$\range (Y^{*};X^{*})\in\cS^{*}_{\partial q}\big( x+u_{\gamma}( x),-\gamma u_\gamma( x)-F( x) \big)$.
\end{lemma}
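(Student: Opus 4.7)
The plan is to trace the two descriptions of $\cS^*_{\partial q}$ through \Cref{lem:spq} and then match the two $n\times n$ matrices in question explicitly.

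First, set $z := x-\gamma^{-1}F(x)$ and $y := x+u_\gamma(x)=\cP_{\gamma^{-1}q}(z)$, so that $\gamma(z-y) = -\gamma u_\gamma(x)-F(x)$. By \cite[Lemma 3.11]{gfrerer2022local} (already invoked in the proof of \Cref{ssbound1}), one has $\cS^*_{\cP_{\gamma^{-1}q}}(z,y) = \{\range(I;B)\mid B\in\partial_B\cP_{\gamma^{-1}q}(z)\}$, with each $B$ symmetric positive semidefinite. Combining this with \Cref{lem:spq}, exactly as in the derivation preceding \eqref{Gss_1} inside the proof of \Cref{ssbound1}, I would conclude
\[
\cS^*_{\partial q}\bigl(x+u_\gamma(x),\,-\gamma u_\gamma(x)-F(x)\bigr)
=\bigl\{\range(B;\,\gamma(I-B))\,\big|\,B\in\partial_B\cP_{\gamma^{-1}q}(z)\bigr\},
\]
so that the admissible subspaces and the elements of $\partial_B\cP_{\gamma^{-1}q}(z)$ are in bijection.

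Next I would observe that the nonsingularity of ${Y^*}^\top F'(x)+{X^*}^\top$ depends only on $\range(Y^*;X^*)$, not on the specific basis matrix. Indeed, two pairs $(Y^*;X^*)$ and $(\tilde Y^*;\tilde X^*)$ spanning the same $n$-dimensional subspace satisfy $(\tilde Y^*;\tilde X^*)=(Y^*;X^*)P$ for some invertible $P\in\Re^{n\times n}$, which gives ${\tilde Y^*}^\top F'(x)+{\tilde X^*}^\top=P^\top({Y^*}^\top F'(x)+{X^*}^\top)$. Hence without loss of generality I may take the canonical representative $Y^*=B$, $X^*=\gamma(I-B)$ for each $B\in\partial_B\cP_{\gamma^{-1}q}(z)$.

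The remaining step is the one-line identification, using the symmetry of $B$ (and therefore of $I-B$):
\[
{Y^*}^\top F'(x)+{X^*}^\top
= BF'(x)+\gamma(I-B)
= -\gamma\bigl[B(I-\gamma^{-1}F'(x))-I\bigr]
= -\gamma\,C,
\]
where $C$ is, by \eqref{surjacobian}, the generic element of $\cT_{u_\gamma}(x)$ associated with $B$. Letting $B$ range over $\partial_B\cP_{\gamma^{-1}q}(z)$ exhausts both $\cT_{u_\gamma}(x)$ and the family of matrices in the lemma, so the two nonsingularity statements coincide. The only non-routine ingredient is the bijection $L\leftrightarrow B$ obtained from \Cref{lem:spq} together with \cite[Lemma 3.11]{gfrerer2022local}; everything after that is an elementary matrix manipulation.
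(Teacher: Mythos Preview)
Your proof is correct and follows essentially the same route as the paper: both identify $\cS^*_{\partial q}(y,\gamma(z-y))=\{\range(B;\gamma(I-B))\mid B\in\partial_B\cP_{\gamma^{-1}q}(z)\}$ via \Cref{lem:spq} and \cite[Lemma~3.11]{gfrerer2022local}, and then use the identity $BF'(x)+\gamma(I-B)=-\gamma\bigl[B(I-\gamma^{-1}F'(x))-I\bigr]$. The only difference is that the paper handles the independence from the choice of representative $(Y^*;X^*)$ by a direct bilinear argument on each direction, whereas your basis-change observation $(\tilde Y^*;\tilde X^*)=(Y^*;X^*)P$ is a cleaner way to reduce to the canonical pair $(B,\gamma(I-B))$.
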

\begin{proof}{Proof}
For convenience, denote
$ u:=u_\gamma( x)$, $ z:= x-{\gamma}^{-1}F(x)$,
and $ y:= u+ x=\cP_{{\gamma}^{-1}q} ( z)$.
It is easy to see from \eqref{proxdef} that
$\gamma( z- y)\in\partial q( y)$,
so that
$\big( x+u_{\gamma}( x);-\gamma u_\gamma( x)-F( x) \big)=( y; \gamma( z- y))\in\gph \partial q$.
 By using \eqref{spq} of \Cref{lem:spq} we have 
\begin{equation}
\label{partial_q_der}
\cS^*_{\partial q}\big( x+u_{\gamma}( x),-\gamma u_\gamma( x)-F( x) \big)
=\big\{
\range (B;\gamma(I-B)) \mid
B\in\partial_B\cP_{{\gamma}^{-1}q} ( z)
\big\}.
\end{equation}
By \eqref{surjacobian} one has
$
\cT_{u_{\gamma}}( x)=\{B(I-{\gamma}^{-1} F'( x))-I
\mid
B\in \partial_B \cP_{{\gamma}^{-1}q} ( z)\}
$.
Thus, it is sufficient to prove that for every $B\in \partial_B \cP_{{\gamma}^{-1}q} ( z)$, the matrix
$B(I-{\gamma}^{-1} F'( x))-I$ is nonsingular if and only if
${ Y^{*}}^\top F'( x)+{X^{*}}^\top $ is nonsingular
for all $X^*\in\Re^{n\times n}$ and $Y^*\in\Re^{n\times n}$ such that
$\range(Y^{*};X^{*})=\range(B;\gamma(I-B))$.

Fix $B\in \partial_B \cP_{{\gamma}^{-1}q} ( z)$.
On the one hand, suppose that
$B(I-{\gamma}^{-1} F'( x))-I\in \cT_{u_{\gamma}}( x)$ is singular.
By taking
$Y^{*}=B$ and $X^{*}=\gamma(I-B)$
one has
$
{Y^{*}}^\top F'( x)+{X^{*}}^\top =B F'( x)+\gamma(I-B)
=-\gamma(B(I-{\gamma}^{-1} F'( x))-I)$, 
which is also singular.
On the other hand, if ${Y^{*}}^\top F'( x)+{X^{*}}^\top $ is singular with $\range(Y^{*};X^{*})=\range(B;\gamma(I-B))$, i.e., there exists a nonzero vector $v\in\Re^n$ such that
${Y^{*}}^\top F'( x)v+{X^{*}}^\top v=0$, one has for every $w\in\Re^n$, $$
w^\top {Y^{*}}^\top F'( x)v+w^\top {X^{*}}^\top v=
(Y^{*}w)^\top F'( x)v+({X^{*}w})^\top v=
\left((Y^{*}w)^\top F'( x)+({X^{*}w})^\top \right)v=0.
$$
Since $\range(Y^{*};X^{*})=\range(B;\gamma(I-B))$, one has
\begin{equation*}
w^\top \left( B F'( x)+ {\gamma(I-B)} \right)v
=
\big((Bw)^\top F'( x)+({\gamma(I-B)w})^\top \big)v=0\quad \forall w\in\Re^n.
\end{equation*}
Therefore, we have
$\left( B F'( x)+ {\gamma(I-B)} \right)v=0$, which implies
that the matrix $-\gamma(B(I-{\gamma}^{-1} F'( x))-I)$ is nonsingular. This completes the proof.
 \end{proof}

\begin{corollary}
Let $\bar x$ be a solution to the GE \eqref{gecp}.
Then, $F+\partial q$ is SCD regular around $(\bar{x},0)$ if and only if every element of $\cT_{u_\gamma}(\bar x)$ defined in \eqref{surjacobian} is nonsingular for any $\gamma>0$.
\end{corollary}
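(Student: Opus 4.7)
The plan is to apply \Cref{nonsingularequiv} at $x=\bar x$ and match the resulting algebraic condition on $\cS^*_{\partial q}(\bar x,-F(\bar x))$ with SCD regularity of $F+\partial q$ around $(\bar x,0)$ via a smooth-perturbation sum rule for $\cS^*$. First, since $\bar x$ solves \eqref{gecp}, \eqref{incluchange} yields $\cP_{\gamma^{-1}q}(\bar x-\gamma^{-1}F(\bar x))=\bar x$, so $u_\gamma(\bar x)=0$ for every $\gamma>0$. The input to \Cref{nonsingularequiv} at $x=\bar x$ therefore collapses to the reference pair $(\bar x,-F(\bar x))\in \gph\partial q$, and the lemma reduces the claim to proving that SCD regularity of $F+\partial q$ around $(\bar x,0)$ is equivalent to the nonsingularity of ${Y^*}^\top F'(\bar x)+{X^*}^\top$ for every $(X^*,Y^*)$ with $\range(Y^*;X^*)\in \cS^*_{\partial q}(\bar x,-F(\bar x))$.

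Next, I would invoke (or directly derive from the defining outer-limit of $\cS^*$ together with the coderivative sum rule $D^*(F+\partial q)(\bar x,0)(y^*)=F'(\bar x)^\top y^*+D^*\partial q(\bar x,-F(\bar x))(y^*)$) the SCD sum rule under a $C^1$ perturbation:
\begin{equation*}
\cS^*_{F+\partial q}(\bar x,0)=\bigl\{\range\bigl(Y^*;\,X^*+F'(\bar x)^\top Y^*\bigr)\bigm|\range(Y^*;X^*)\in\cS^*_{\partial q}(\bar x,-F(\bar x))\bigr\}.
\end{equation*}
The underlying SCD property of $F+\partial q$ around $(\bar x,0)$ is inherited from that of $\partial q$ around $(\bar x,-F(\bar x))$ via the $C^1$ shift $(x,v)\mapsto(x,F(x)+v)$; the latter holds because $\partial q$ is an SCD mapping for closed proper convex $q$ in finite dimension (cf. the proximal-mapping facts used in \eqref{partial_q_der} and \cite[Proposition 3.22]{gfrerer2022local}). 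Unfolding the definition of $\mathds{Z}_n^{\rm reg}$, SCD regularity thus translates to the implication
\begin{equation*}
(X^*+F'(\bar x)^\top Y^*)v=0\ \Longrightarrow\ Y^*v=0
\end{equation*}
for every admissible $(X^*,Y^*)$.

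Finally, I would show that this implication is in turn equivalent to the outright nonsingularity of $X^*+F'(\bar x)^\top Y^*$. One direction is immediate. For the converse, I would use the explicit parameterization exhibited in \eqref{partial_q_der}: up to a choice of $\gamma>0$, every $(Y^*;X^*)$ may be written as $(B;\gamma(I-B))$ with $B\in\partial_B\cP_{\gamma^{-1}q}(\bar x-\gamma^{-1}F(\bar x))$. Combining $(X^*+F'(\bar x)^\top Y^*)v=(\gamma(I-B)+F'(\bar x)^\top B)v=0$ with the conclusion $Y^*v=Bv=0$ of the implication gives $\gamma v=\gamma(I-B)v+F'(\bar x)^\top Bv=0$, hence $v=0$. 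Chaining this equivalence with the one coming from \Cref{nonsingularequiv} completes the proof; the hypothesis $\gamma>0$ is used crucially here.

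The main obstacle is the clean statement and verification of the SCD sum rule for the smooth perturbation $F$. If it is not already a direct corollary of the calculus in \cite{gfrerer2022local} (it parallels the transformation rule \cite[Proposition 3.14]{gfrerer2022local} used in \Cref{lem:spq}), it should follow from a short computation treating $(x,v)\mapsto(x,F(x)+v)$ as a $C^1$ diffeomorphism of graphs and passing the resulting shift through the outer-limit construction that defines $\cS^*$.
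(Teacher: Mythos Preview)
Your proposal is correct and follows the same overall two-step structure as the paper: apply \Cref{nonsingularequiv} at $x=\bar x$ (using $u_\gamma(\bar x)=0$), then match the resulting nonsingularity condition with SCD regularity of $F+\partial q$ at $(\bar x,0)$.

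The difference is only in the second step. The paper dispatches it in one line by quoting \cite[Proposition~5.1(ii)]{Gfrerer222}, which states directly that $F+\partial q$ is SCD regular around $(\bar x,0)$ if and only if ${Y^*}^\top F'(\bar x)+{X^*}^\top$ is nonsingular for every $\range(Y^*;X^*)\in\cS^*_{\partial q}(\bar x,-F(\bar x))$. You instead rebuild that proposition from scratch: first the $C^1$-shift sum rule $\cS^*_{F+\partial q}(\bar x,0)=\{\range(Y^*;X^*+F'(\bar x)^\top Y^*)\}$ (which is indeed a direct instance of \cite[Proposition~3.14]{gfrerer2022local} applied to the graph diffeomorphism $(x,v)\mapsto(x,F(x)+v)$), then translate $\mathds{Z}_n^{\rm reg}$ into the kernel implication, and finally upgrade the implication to full nonsingularity via the parameterization $L=\range(B;\gamma(I-B))$ from \eqref{partial_q_der}. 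All three sub-steps are sound; in particular your last computation ($Bv=0$ forces $\gamma v=(\gamma(I-B)+F'(\bar x)^\top B)v=0$) is exactly the place where $\gamma>0$ is essential, as you note. So your route is a self-contained reproof of the cited external result, while the paper simply invokes it; both yield the same conclusion with the same logical content.
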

\begin{proof}{Proof}
According to \cite[Proposition 5.1(ii)]{Gfrerer222} we know that
$F+\partial q$ is SCD regular around $(\bar{x},0)$ if and only if
 ${Y^{*}}^\top F'(\bar x)+{X^{*}}^\top $ is nonsingular for all $X^*\in\Re^{n\times n}$ and $Y^*\in\Re^{n\times n}$ such that
$\range(Y^{*};X^{*})\in\cS^{*}_{\partial q}\big( \bar x,-F(\bar x) \big)$. Thus, the conclusion follows from \Cref{nonsingularequiv}. This completes the proof.
 \end{proof}

The following result is also a consequence of \Cref{nonsingularequiv}.
\begin{lemma}
\label{C_B}
Let $u_{\gamma}$ $(\gamma>0)$ be the function defined in \eqref{proximal}, and $\cJ$ be the mapping defined in \eqref{GExd}.
For any $ x\in \Re^n$ such that every element of $\cT_{u_{\gamma}}( x)$ is nonsingular, one has
\[
\label{sfcb}
\begin{array}{ll}
\cS^*_{\cJ}
\big(
( x, x+u_{\gamma}( x)),
(F( x)+d^*,-u_\gamma( x))
\big)
=\big\{\range\left(C_B; I\right)
\mid 
\begin{array}{ll}
B\in \partial_B \cP_{{\gamma}^{-1}q} ( x-{\gamma}^{-1}F( x))
\end{array}
\big\}
\end{array},
\]
where
$
C_{B}:= \begin{pmatrix}
B\widetilde M^{-1} & B \widetilde M^{-1}
\\
\gamma(I-B)\widetilde M^{-1} & \gamma(I-B)\widetilde M^{-1}-I
\end{pmatrix}$
with
$\widetilde M:=F'( x)^\top B+\gamma(I-B)$.
\end{lemma}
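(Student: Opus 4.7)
The plan is to compute $\cS^*_\cJ$ at the indicated graph point in three stages: (i) identify the SCD elements of $\partial q$ at the relevant point, (ii) transfer through $\gph\cJ$ via its explicit tangent structure, and (iii) rearrange the resulting subspace into the form $\range(C_B;I)$. Throughout, write $\bar d := x + u_\gamma(x)$, $d^* := -\gamma u_\gamma(x) - F(x)$, and $\bar z := x - \gamma^{-1}F(x)$, so that $(\bar d, d^*)\in\gph\partial q$ and $\bar d = \cP_{\gamma^{-1}q}(\bar z)$. Combining \cite[Lemma 3.11]{gfrerer2022local} (which gives $\cS^*_{\cP_{\gamma^{-1}q}}(\bar z, \bar d) = \{\range(I;B) : B\in\partial_B\cP_{\gamma^{-1}q}(\bar z)\}$) with \Cref{lem:spq} already yields
\begin{equation*}
\cS^*_{\partial q}(\bar d, d^*) = \{\, L_B := \range(B;\, \gamma(I-B)) \mid B \in \partial_B\cP_{\gamma^{-1}q}(\bar z)\,\}.
\end{equation*}

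Since $\gph\cJ$ admits the smooth parameterization $(x,(d,e^*))\mapsto ((x,d),(F(x)+e^*,x-d))$ from $\Re^n\times\gph\partial q$, the tangent to $\gph\cJ$ corresponding to $L_B\in\cS^*_{\partial q}(\bar d, d^*)$ is the $2n$-dimensional subspace
\begin{equation*}
T_{L_B} = \{(\xi, Bw, F'(x)\xi + \gamma(I-B)w, \xi - Bw) : (\xi, w)\in\Re^n\times\Re^n\}.
\end{equation*}
A direct orthogonality computation produces $T_{L_B}^\perp$ as those $((\alpha^1,\alpha^2),(\beta^1,\beta^2))$ with $\alpha^1 = -F'(x)^\top\beta^1 - \beta^2$ and $B(\alpha^2-\beta^2) + \gamma(I-B)\beta^1 = 0$, whence $T_{L_B}^*$ carries primal block $(-\beta^1,-\beta^2)$ and dual block $(-F'(x)^\top\beta^1 - \beta^2,\alpha^2)$ under the same constraint.

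To express $T_{L_B}^*$ as $\range(C_B;I)$, I treat the dual coordinates $(r,s)\in\Re^{2n}$ as free; eliminating $\beta^2,\alpha^2$ reduces the constraint to $\hat M \beta^1 = -B(r+s)$ for $\hat M := BF'(x)^\top + \gamma(I-B)$. The key algebraic observation is the commutation identity $\hat M B = B\widetilde M$, which is immediate from $B(I-B)=(I-B)B$. Together with Sylvester's determinant theorem applied to the forms $\hat M = \gamma I + B(F'(x)^\top - \gamma I)$ and $\widetilde M = \gamma I + (F'(x)^\top - \gamma I)B$, it gives $\det\hat M = \det\widetilde M$. Under the hypothesis of the lemma, \Cref{nonsingularequiv} ensures $\widetilde M$ is invertible, hence $\hat M^{-1}$ exists and the commutation yields $\hat M^{-1}B = B\widetilde M^{-1}$. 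Thus $\beta^1 = -B\widetilde M^{-1}(r+s)$ is the unique solution, so the top row of $C_B$ becomes $(B\widetilde M^{-1}, B\widetilde M^{-1})$. The bottom row then reads $(I - F'(x)^\top B\widetilde M^{-1}, -F'(x)^\top B\widetilde M^{-1})$; the identity $\widetilde M - F'(x)^\top B = \gamma(I-B)$ rewrites this as $(\gamma(I-B)\widetilde M^{-1}, \gamma(I-B)\widetilde M^{-1} - I)$, matching the stated form of $C_B$ block by block.

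Finally, the lim-sup passage from $\widehat\cS^*_\cJ$ to $\cS^*_\cJ$ does not enlarge the set: each $B\in\partial_B\cP_{\gamma^{-1}q}(\bar z)$ already arises as a limit of derivatives of $\cP_{\gamma^{-1}q}$ at differentiable points near $\bar z$, and the map $B\mapsto\range(C_B;I)$ is continuous wherever $\widetilde M$ stays invertible (on a neighborhood of $\bar z$ by upper semicontinuity of $\cT_{u_\gamma}$ together with the hypothesis). The principal obstacle is the algebraic stage: spotting the commutation $\hat M B = B\widetilde M$, upgrading the naturally arising condition $\hat M^{-1}$ to invertibility of $\widetilde M$ via Sylvester's identity, and recasting the bottom-row blocks into the precise form stated by means of $\widetilde M - F'(x)^\top B = \gamma(I-B)$.
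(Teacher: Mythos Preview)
Your argument is correct but follows a genuinely different route from the paper's. The paper quotes \cite[Proposition~5.1(1)]{Gfrerer222} to obtain directly
\[
\cS^*_\cJ\bigl((x,\bar d),(F(x)+d^*,x-\bar d)\bigr)
=\left\{\range\!\left(\begin{pmatrix}Y^*&0\\0&-I\end{pmatrix};\begin{pmatrix}F'(x)^\top Y^*&-I\\X^*&I\end{pmatrix}\right):\ \range(Y^*;X^*)\in\cS^*_{\partial q}(\bar d,d^*)\right\},
\]
then writes $(Y^*;X^*)=(BP;\gamma(I-B)P)$ for a nonsingular transition matrix $P$ and inverts the $2\times 2$ block on the right by an explicit formula; the $P$ cancels and the stated $C_B$ drops out. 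Your path instead computes the tangent to $\gph\cJ$ from the parameterization, takes the orthogonal complement and the $*$-transform by hand, and handles the algebra with the commutation $\hat M B=B\widetilde M$ together with Sylvester's determinant identity to pass between $\hat M$ and $\widetilde M$. Both approaches are valid; the paper's is shorter because it offloads the graph-transfer step to an external reference, while yours is more self-contained and makes the mechanism (why $\widetilde M$ rather than $\hat M$ appears) transparent. One expository point worth tightening: when you write ``the tangent to $\gph\cJ$ corresponding to $L_B\in\cS^*_{\partial q}$'' you are silently using that for $\partial q$ the tangent subspace at a graphically smooth point coincides with its own $*$-image (because $B$ is symmetric); making this explicit would remove any ambiguity about whether you are feeding a tangent or an adjoint subspace into the chain-rule step.
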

\begin{proof}{Proof}
According to \cite[Proposition 5.1(1)]{Gfrerer222} we know that, for any $
(x;d)\in\Re^{2n}$ and $d^*\in\partial q(d)$,
$$
\begin{array}{ll}
\cS^*_\cJ((x,d),(F(x)+d^*,x-d))
=\left\{\range\left(\begin{pmatrix}
Y^* & 0
\\
0 & -I
\end{pmatrix}
\ ; \ \begin{pmatrix}
F'(x)^\top Y^* & -I
\\
X^* &\ I
\end{pmatrix}\right)
\Big |\,
\range(Y^{*};X^{*})\in\cS^{*}_{\partial q}(d,d^{*})
\right\}.
\end{array}
$$
From \Cref{nonsingularequiv} we know that
every element of $\cT_{u_{\gamma}}( x)$ is nonsingular if and only if
${(Y^{*}}^\top F'( x)+{X^{*}}^\top )$ is nonsingular for all $X^*$ and $Y^*$ such that
$\range(Y^{*};X^{*})\subseteq
\cS^{*}_{\partial q}\big( x+u_{\gamma}( x),-\gamma u_\gamma( x)-F( x) \big)$.
Moreover, \eqref{partial_q_der} holds in this case, so that with $ z:= x-{\gamma}^{-1}F( x)$ one has
\[
\label{cftemp}
\begin{array}{ll}
\cS^*_{\cJ}\big(( x, x+u_{\gamma}( x)),
(F( x)+d^*,-u_\gamma( x))\big)
\\[2mm]
=\left\{
\range\left(\begin{pmatrix}
Y^* & 0
\\
0 & -I
\end{pmatrix}
\begin{pmatrix}
F'( x)^\top Y^* & -I
\\
X^* &\ I
\end{pmatrix}^{-1}; I\right)
\Big |
\begin{array}{ll}
\range(Y^{*};X^{*})= \range(B;\gamma (I-B)),
\\[1mm]
B\in \partial_B \cP_{{\gamma}^{-1}q} ( z)
\end{array}
\right\}.
\end{array}
\]
Fix $B\in \partial_B \cP_{{\gamma}^{-1}q} ( z)$ and choose
$(Y^{*};X^{*})$ such that $\range(Y^{*};X^{*})= \range(B;\gamma (I-B))$.
Since that $(B;\gamma (I-B))$ has full column rank,
there exists a nonsingular matrix $P\in \Re^{n\times n}$ such that $Y^{*} = BP$ and $X^{*} = \gamma (I-B)P$. Moreover, since $M:=F'( x)^\top Y^*+X^*=(F'( x)^\top B+\gamma(I-B)) P$ is nonsingular, one has $M^{-1}=P^{-1}(F'( x)^\top B+\gamma(I-B))^{-1}$ and
$$
\begin{array}{l}
\begin{pmatrix}
Y^* & 0
\\
0 & -I
\end{pmatrix}
\begin{pmatrix}
F'( x)^\top Y^* & -I
\\
X^* &\ I
\end{pmatrix}^{-1}
=
\begin{pmatrix}
B P & 0
\\
0 & -I
\end{pmatrix}
\begin{pmatrix}
F'( x)^\top B P & -I
\\
\gamma (I-B) P &\ I
\end{pmatrix}^{-1}
\\[4mm]
=\begin{pmatrix}
B P & 0
\\
0 & -I
\end{pmatrix}
\begin{pmatrix}
M^{-1} & M^{-1}
\\
-\gamma (I-B) P M^{-1} &\ I- \gamma (I-B) P M^{-1}
\end{pmatrix}
=
\begin{pmatrix}
B P M^{-1} & B P M^{-1}
\\
\gamma(I-B) P M^{-1} &\ \gamma(I-B) P M^{-1}-I
\end{pmatrix}
\\[5mm]
=
\begin{pmatrix}
B (F'( x)^\top B+\gamma(I-B))^{-1} & B (F'( x)^\top B+\gamma(I-B))^{-1}
\\
\gamma(I-B) (F'( x)^\top B+\gamma(I-B))^{-1} &\ \gamma(I-B) (F'( x)^\top B+\gamma(I-B))^{-1} -I
\end{pmatrix}.
\end{array}
$$
Therefore, one can remove the dependence on the precise choice of $(Y^*; X^*)$ and use only the information of $B$ in \eqref{cftemp}. So we obtain \eqref{sfcb}. This completes the proof.
 \end{proof}

\subsection{Equivalence to a G-semismooth Newton method}
Based on the discussions in the previous subsections, a G-semismooth Newton method (Algorithm \ref{alg:nm})
for solving \eqref{gecp} via its equivalent form \eqref{proximal} can be given as follows.

\begin{algorithm}[H]
\caption{A G-semismooth Newton method for solving \eqref{gecp} \label{alg:gssnge}}

\Input {$x^{(0)}\in\Re^n$, $F:\Re^n\to\Re^n$, and $q:\Re^n\to\bar{\Re}$.} 
\Output{$\{x^{(k)}\}$.}
\For{$k=0,1, \ldots$,}{
1. if $0\in F(x^{(k)})+\partial q(x^{(k)})$, stop the algorithm;
\\[1mmm]
2. select $\gamma^{(k)}>0$, and compute $\text{ }u^{(k)}:=u_{\gamma^{(k)}}(x^{(k)})$;
\\[1mmm]
3. select
$V^{(k)}\in
\cT_{u_{\gamma^{(k)}}}(x^{(k)})$ via \eqref{surjacobian}, then compute the Newton direction $\Delta x^{(k)}$ from
$V^{(k)} \Delta x^{(k)}=-u^{(k)}$,
and obtain the new iterate via $x^{(k+1)}:= x^{(k)}+\Delta x^{(k)}$.
}
\end{algorithm}

Note that Algorithm \ref{alg:gssnge} is essentially a ``uniform'' version of Algorithm \ref{alg:nm} for solving a family of problems, i.e., $\{u_\gamma (x)=0,\gamma>0\}$, sharing the common solutions.
In each iteration, one selects one instance of these problems and performs the G-semismooth Newton step via $\cT_{u_\gamma}$. 
 In the following, we show that Algorithm \ref{alg:gssnge} is well-defined if 
Algorithm \ref{algo:ssnm_GE} is, and vice versa.
Moreover, a sequence generated by one of them can be treated as the one generated by the other.

\begin{lemma}
\label{prop:equiv1}
Given $x^{(0)}\in \Re^n$ and $\{\gamma^{(k)}\}$.
 Suppose that both Algorithm \ref{algo:ssnm_GE} and Algorithm \ref{alg:gssnge} generate the same point $x^{(\bar k)}$ after the iteration indexed by $(\bar k-1)\ge 0$, and $B^{(\bar k)}\in \partial_B \cP_{{\gamma^{(\bar k)}}^{-1} q} (\cdot)\mid_{x^{(\bar k)}-{\gamma^{(\bar k)}}^{-1}F(x^{(\bar k)})}$ is chosen such that
\begin{equation}
\label{fixmatrix}
\range(Y^{*(\bar k)};X^{*(\bar k)}):=\range(B^{(\bar k)};\gamma^{(\bar k)}(I-B^{(\bar k)}))
\quad\mbox{and}\quad 
V^{(\bar k)}:=B^{(\bar k)}(I- {\gamma^{(\bar k)}}^{-1} F'( x^{(\bar k)}))-I.
\end{equation}
Then, one has $\range(Y^{*(\bar k)};X^{*(\bar k)})\in \cS^{*}_{\partial q}\big(\hat{d}^{(\bar k)},\hat{d}^{*(\bar k)}\big) $ 
and 
$V^{(\bar k)}\in \cT_{u_{\gamma^{(\bar k)}}}(x^{(\bar k)})$.
Moreover, both the two algorithms generate the same $x^{(\bar k+1)}$ if \eqref{fixmatrix} is used in them.
\end{lemma}
 
\begin{proof}{Proof}
Recall that, in iteration indexed by $\bar k$ of Algorithm \ref{algo:ssnm_GE}, the Newton direction $\Delta x^{(\bar k)}$ generated by \eqref{iteration}, i.e. the linear system
\begin{equation}
\label{iteration_SCD}
 {(Y^{*(\bar k)}}^\top F'(x^{(\bar k)})+{X^{*(\bar k)}}^\top )\Delta x^{(\bar k)}={( \gamma^{(\bar k)}Y^{*(\bar k)}}^\top +{X^{*(\bar k)}}^\top )u^{(\bar k)},
\end{equation}
where $u^{(\bar k)}:=u_{\gamma^{(\bar k)}}(x^{(\bar k)})$, $\hat{d}^{(\bar k)}:=x^{(\bar k)}+u^{(\bar k)}$ and $\hat{d}^{*(\bar k)}:=-\gamma^{(\bar k)}u^{(\bar k)}-F(x^{(\bar k)})$.
Thus, by denoting $W$ as the nonsingular transition matrix such that $Y^{*(\bar k)}=B^{(\bar k)}W$ and $X^{*(\bar k)}=\gamma^{(\bar k)}(I-B^{(\bar k)})W$, \eqref{iteration_SCD} is equivalent to
\begin{equation*}
 {(W^\top {B^{(\bar k)}}} F'(x^{(\bar k)})+{\gamma^{(\bar k)} W^\top (I-B^{(\bar k)}))\Delta x^{(\bar k)}=( \gamma^{(\bar k)} W^\top B^{(\bar k)}}+{\gamma^{(\bar k)} W^\top (I-B^{(\bar k)})})u^{(\bar k)},
\end{equation*}
or equivalently,
$\big( B^{(\bar k)} \big(I- \frac{1}{\gamma^{(\bar k)} }F'(x^{(\bar k)})\big)-I\big)\Delta x^{(\bar k)}= - u^{(\bar k)}$,
which is exactly the Newton system of the G-semismooth Newton method using $\cT_{u_{\gamma^{(\bar k)}}}$ (Algorithm \ref{alg:gssnge}). This completes the proof.
 \end{proof}

Next, we show that the local superlinear convergence of Algorithm \ref{alg:gssnge}
can be obtained under the assumptions made in \Cref{ThConvSSNewtonVI} (i.e., \cite[Theorem 5.2]{Gfrerer222}).
\begin{proposition}
\label{u_gamma_Gss}
Let $\bar{x}$ be a solution of \eqref{gecp}. Assume that $\partial q$ is SCD semismooth$^*$ at $(\bar{x},-F(\bar{x}))$
and $\partial q+F$ is SCD regular around $(\bar{x},0)$.
Then for every pair $\underline{\gamma},\bar\gamma$ with $0<\underline{\gamma}\leq \bar\gamma$ there exists a neighborhood $\cU$ of $\bar{x}$ such that for every starting point $x^{(0)}\in \cU$ Algorithm \ref{alg:gssnge} produces a sequence $\{x^{(k)}\}$ converging superlinearly to $\bar{x}$, provided we choose in every iteration step $\gamma^{(k)}\in [\underline{\gamma},\bar\gamma]$.
\end{proposition}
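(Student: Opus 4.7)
The plan is to reduce the claim to the convergence of Algorithm \ref{algo:ssnm_GE} already established in Theorem \ref{ThConvSSNewtonVI}, by exploiting the iterate-wise equivalence in Lemma \ref{prop:equiv1}. Under the stated hypotheses, Theorem \ref{ThConvSSNewtonVI} provides a neighborhood $U$ of $\bar x$ on which Algorithm \ref{algo:ssnm_GE} produces a sequence converging superlinearly to $\bar x$ for any choice $\gamma^{(k)}\in[\underline\gamma,\bar\gamma]$. I would take $\cU:=U$ and then show that every trajectory of Algorithm \ref{alg:gssnge} starting in $\cU$ with $\gamma^{(k)}\in[\underline\gamma,\bar\gamma]$ can be matched, iteration by iteration, to a trajectory of Algorithm \ref{algo:ssnm_GE}.

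First I would note that by definition \eqref{surjacobian} every $V^{(k)}\in \cT_{u_{\gamma^{(k)}}}(x^{(k)})$ admits a representation
\[
V^{(k)}=B^{(k)}\bigl(I-(\gamma^{(k)})^{-1}F'(x^{(k)})\bigr)-I
\]
with $B^{(k)}\in \partial_B\cP_{(\gamma^{(k)})^{-1}q}(x^{(k)}-(\gamma^{(k)})^{-1}F(x^{(k)}))$. Proceeding inductively, suppose the first $\bar k$ iterates of the two algorithms coincide. At iteration $\bar k$, select in Algorithm \ref{algo:ssnm_GE} the same $\gamma^{(\bar k)}$ and put $Y^{*(\bar k)}:=B^{(\bar k)}$, $X^{*(\bar k)}:=\gamma^{(\bar k)}(I-B^{(\bar k)})$. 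Identity \eqref{partial_q_der} established in the proof of Lemma \ref{nonsingularequiv} yields
\[
\range(Y^{*(\bar k)};X^{*(\bar k)})=\range(B^{(\bar k)};\gamma^{(\bar k)}(I-B^{(\bar k)}))\in\cS^{*}_{\partial q}(\hat d^{(\bar k)},\hat d^{*(\bar k)}),
\]
so this is an admissible choice in Algorithm \ref{algo:ssnm_GE}. Invoking Lemma \ref{prop:equiv1} then forces the $(\bar k+1)$-st iterates of both algorithms to agree. Hence any trajectory of Algorithm \ref{alg:gssnge} is realized as a trajectory of Algorithm \ref{algo:ssnm_GE}, and the superlinear convergence follows from Theorem \ref{ThConvSSNewtonVI}.

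The only technical point that needs care is that Algorithm \ref{alg:gssnge} must remain well-defined along the inductive construction, i.e., the selected $V^{(k)}$ must be nonsingular so that the Newton system admits a unique solution. This is guaranteed by Corollary \ref{Tu_noningular}: the SCD regularity of $F+\partial q$ around $(\bar x,0)$ makes every element of $\cT_{u_{\gamma}}(\bar x)$ nonsingular for each $\gamma\in[\underline\gamma,\bar\gamma]$, and upper semicontinuity of $\cT_{u_{\gamma}}$ at $\bar x$ (together with compactness of $\partial_B\cP_{\gamma^{-1}q}$) propagates nonsingularity to a neighborhood of $\bar x$; by shrinking $\cU$ if necessary one obtains a common such neighborhood across the compact parameter range $[\underline\gamma,\bar\gamma]$. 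The main obstacle, and the reason the argument is not completely formal, is verifying this uniform nonsingularity on $[\underline\gamma,\bar\gamma]$; however, it can be handled by a compactness argument on $[\underline\gamma,\bar\gamma]\times\mathbb B_\rho(\bar x)$ using the upper semicontinuity of the multifunction $(\gamma,x)\mapsto\cT_{u_\gamma}(x)$ and the fact that nonsingularity is an open condition. Once this uniform nonsingularity is in place, the inductive matching goes through and superlinear convergence is inherited from Theorem \ref{ThConvSSNewtonVI}.
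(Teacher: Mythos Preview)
Your argument is essentially correct as a standalone proof of the proposition, but it takes the opposite route from the paper and would undermine the paper's intended conclusion. The paper establishes Proposition~\ref{u_gamma_Gss} \emph{directly} within the G-semismooth framework, without appealing to Theorem~\ref{ThConvSSNewtonVI}: SCD regularity of $F+\partial q$ is translated (via \cite[Proposition 5.1]{Gfrerer222} and \cite[Proposition 4.8]{gfrerer2022local}) into a uniform bound $\kappa$ on the block matrix of Lemma~\ref{C_B}, and this bound is fed into the quantitative G-semismoothness estimate of Proposition~\ref{ssbound1}(b) to obtain the contraction $\|\widehat C^{-1}u_\gamma(\hat x)-(\hat x-\bar x)\|\le\xi\|\hat x-\bar x\|$ uniformly over $\gamma\in[\underline\gamma,\bar\gamma]$ on an explicit neighborhood $\cU$. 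Superlinear convergence then follows by shrinking $\epsilon$ (equivalently $\xi$). The Remark following the proof also notes an alternative self-contained route via Corollary~\ref{scdss} and Corollary~\ref{Tu_noningular}; neither route invokes Theorem~\ref{ThConvSSNewtonVI}.

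The distinction is not cosmetic. The very next result, Theorem~\ref{tmfinal}, asserts that the convergence of Algorithm~\ref{algo:ssnm_GE} (i.e., Theorem~\ref{ThConvSSNewtonVI}) \emph{can be deduced from} Proposition~\ref{u_gamma_Gss} together with Lemma~\ref{prop:equiv1}. If you prove Proposition~\ref{u_gamma_Gss} by invoking Theorem~\ref{ThConvSSNewtonVI}, that deduction becomes circular and Theorem~\ref{tmfinal} loses its content. The paper's direct argument is precisely what supports the thesis that the semismooth$^*$ convergence theory is subsumed by the older G-semismooth Newton framework; your reduction only reiterates the trajectory matching already isolated in Lemma~\ref{prop:equiv1}.
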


 \begin{proof}{Proof}
From \cite[Proposition 5.1]{Gfrerer222} we know $\partial q+F$ is SCD regular around $(\bar{x},0)$ if and only if the mapping $\cJ$ defined in \eqref{GExd} is SCD regular around $\left((\bar{x},\bar{x}),(0,0)\right)$. Moreover, according to \cite[Proposition 4.8]{gfrerer2022local},
for every $\kappa> {\rm scd\, reg}\cJ((\bar{x},\bar{x}),(0,0))$, one can find a positive radius $\rho>0$ such that
for every $\gamma \in [\underline{\gamma},\overline{\gamma}]$ and every $x\in \Re^n$ such that $(x,x + u_{\gamma}(x)) \in \mathbb B_\rho(\bar x,\bar x)$, the mapping $\cJ$ is also SCD regular
around $((x,x + u_{\gamma}(x)),(-\gamma u_{\gamma}(x),-u_{\gamma}(x)))$
and
$\kappa>{\rm scd\, reg}\, \cJ((x,x + u_{\gamma}(x)),(-\gamma u_{\gamma}(x),-u_{\gamma}(x)))$.
Thus, by \Cref{nonsingularequiv} and \cite[Proposition 5.1(1)]{Gfrerer222}, each element of $\cT_{u_\gamma}(x)$ is nonsingular. Moreover, by combining \cite[eq. (34)]{gfrerer2022local} and \Cref{C_B}, we obtain for any $B \in \partial_B \cP_{{\gamma}^{-1}q} (\cdot)\mid_{x-{\gamma}^{-1}F(x)}$,
\begin{equation} \label{bound_kappa}
\left\|
\begin{pmatrix}
B[F'(x)^\top B+ {\gamma}(I-B)]^{-1} & B [F'(x)^\top B+ {\gamma}(I-B)]^{-1}
\\[1mm]
\gamma(I-B)[F'(x)^\top B+ {\gamma}(I-B)]^{-1} & \gamma(I-B)[F'(x)^\top B+ {\gamma}(I-B)]^{-1}-I
\end{pmatrix}
\right\|
\leq \kappa.
\end{equation}
Let $\mathbb B_{r}(\bar x)$ be the ball such that $F$ is Lipschitz continuous on it with modulus $\ell\ge 0$.
Take $0<\xi<1$ and
$$
\begin{array}{ll}
\epsilon := \frac{2\sqrt{2}(\ell+1)\xi}{\sqrt{n}\kappa
\mathop{\max}
\limits_{\gamma \in[\underline{\gamma},\overline{\gamma}]}\left(\max \{1,\gamma\} (1+\frac{\ell}{\gamma})+1\right)}.
\end{array}
$$
Then, for any $\hat x$ satisfying
$\|\hat x-\bar x\|\le
\min\big\{\frac{\min\{\delta_q,r\}}{(1+\frac{\ell}{\gamma})\max\{1,\gamma\}},\delta\big\}$ and $(\hat x,\hat x + u_{\gamma}(\hat x)) \in \mathbb B_\rho(\bar x,\bar x)$, 
where $\delta_q$ and $\delta\le \min\{\frac{\delta_q}{1+\ell}, r\}$ are the positive constants (depending on $\epsilon$) such that \eqref{scccite} and \eqref{ftylor} holds, 
one can obtain from \Cref{ssbound1}(b) the nonsingularity of each element $\widehat C : = \widehat B(I-{\gamma}^{-1} F'(\hat x))-I\in \cT_{u_\gamma}(\hat x)$ with $\widehat B\in \partial_B \cP_{{\gamma}^{-1}q} (\hat x-{\gamma}^{-1}F(\hat x))$, and
\begin{equation*}
\begin{array}{ll}
\vert| \widehat C^{-1}u_\gamma(\hat x) - (\hat x-\bar{x})|\vert
\\[1mm]
\le \frac{\epsilon}{2\sqrt{2}(\ell+1)}
{\left(
\max\{1,\gamma\}(1+\frac{\ell}{\gamma})\|( \widehat B {M}^{-1}; \gamma( I-\widehat B){M}^{-1})\|_F
+\|\widehat B{M}^{-1}\|_F
\right)}
\|\hat x-\bar x\|
\\[1mm]
\le
\frac{\sqrt{n}\kappa\epsilon}{2\sqrt{2}(\ell+1)}\mathop{\max}
\limits_{\gamma \in[\underline{\gamma},\overline{\gamma}]}\left(\max \{1,\gamma\} (1+\frac{\ell}{\gamma})+1\right)
\|\hat x-\bar x\|=\xi \|\hat x-\bar x\|,
\end{array}
\end{equation*}
where $ M := \gamma \widehat C^\top \equiv -F'(\hat x)^\top \widehat B- {\gamma}(I-\widehat B)$, and the last inequality comes from \eqref{bound_kappa}.
Then by letting
$$
\begin{array}{ll}
\cU:=\Big\{\hat x \,\Big |\,
\|\hat x-\bar x\|\le
\min\big\{\frac{\min\{\delta_q,r\}}{(1+\frac{\ell}{\gamma})\max\{1,\gamma\}},\delta
\big\},
\ 
(\hat x,\hat x + u_{\gamma}(\hat x)) \in \mathbb{B}_\rho(\bar x,\bar x)\quad \forall \gamma \in[\underline{\gamma},\overline{\gamma}]
\Big\},
\end{array}
$$
one can get the convergence of Algorithm \ref{alg:gssnge} provided that $x^{(0)}\in \cU$.
The superlinear convergence rate comes from further shrinking the value of $\epsilon$, and this completes the proof.
 \end{proof}

Finally, we have the following result, showing that
Algorithm \ref{algo:ssnm_GE} is an
instance of Algorithm \ref{alg:gssnge}.

\begin{theorem}
Under the assumptions of
\Cref{ThConvSSNewtonVI},
Algorithm \ref{algo:ssnm_GE} is an instance of Algorithm \ref{alg:gssnge},
and the local superlinear convergence of Algorithm \ref{algo:ssnm_GE} (i.e., \Cref{ThConvSSNewtonVI}) can be guaranteed by \Cref{u_gamma_Gss}.
\end{theorem}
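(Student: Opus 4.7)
The plan is to argue inductively that any sequence produced by Algorithm \ref{algo:ssnm_GE} can be realized as a sequence produced by Algorithm \ref{alg:gssnge}, and then transfer the superlinear convergence of Algorithm \ref{alg:gssnge} established in \Cref{u_gamma_Gss} to Algorithm \ref{algo:ssnm_GE}. The key observation is that the assumptions in \Cref{ThConvSSNewtonVI}, namely that $\partial q$ is SCD semismooth$^*$ at $(\bar{x},-F(\bar{x}))$ and that $F+\partial q$ is SCD regular around $(\bar{x},0)$, are exactly the hypotheses of \Cref{u_gamma_Gss}, so nothing more on the algorithmic side must be verified once the iteration-level equivalence is in place.

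First I would run both algorithms in parallel from a common starting point $x^{(0)}$ lying in the convergence neighborhood $\cU$ given in \eqref{convergenceregion}, using the same sequence of stepsizes $\gamma^{(k)}\in[\underline\gamma,\bar\gamma]$. For the induction base, both algorithms produce the same quantities $u^{(0)}=u_{\gamma^{(0)}}(x^{(0)})$, $\hat d^{(0)}=x^{(0)}+u^{(0)}$, and $\hat d^{*(0)}=-\gamma^{(0)}u^{(0)}-F(x^{(0)})$. In Step 3 of Algorithm \ref{algo:ssnm_GE}, any selection $(X^{*(0)},Y^{*(0)})$ with $\range(Y^{*(0)};X^{*(0)})\in\cS^{*}_{\partial q}(\hat d^{(0)},\hat d^{*(0)})$ can, by combining \cite[Lemma 3.11]{gfrerer2022local} with \Cref{lem:spq}, be written as $\range(B^{(0)};\gamma^{(0)}(I-B^{(0)}))$ for some $B^{(0)}\in\partial_B\cP_{(\gamma^{(0)})^{-1}q}(x^{(0)}-(\gamma^{(0)})^{-1}F(x^{(0)}))$. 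Setting $V^{(0)}:=B^{(0)}(I-(\gamma^{(0)})^{-1}F'(x^{(0)}))-I$ gives an element of $\cT_{u_{\gamma^{(0)}}}(x^{(0)})$ by \eqref{surjacobian}, so this $V^{(0)}$ is a valid choice in Step 3 of Algorithm \ref{alg:gssnge}.

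With these paired choices in place, the inductive step is delivered by \Cref{prop:equiv1}, which says exactly that the Newton direction $\Delta x^{(0)}$ computed from the linear system \eqref{iteration} coincides with the Newton direction obtained from $V^{(0)}\Delta x^{(0)}=-u^{(0)}$, and therefore both algorithms produce the same iterate $x^{(1)}$. Repeating the same matching argument at each iteration, the entire sequence $\{x^{(k)}\}$ generated by Algorithm \ref{algo:ssnm_GE} is simultaneously a sequence generated by Algorithm \ref{alg:gssnge}. At that point \Cref{u_gamma_Gss} applies verbatim, shrinking if necessary the neighborhood $U$ of $\bar x$, and yields local superlinear convergence of $\{x^{(k)}\}$ to $\bar x$.

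The only delicate point I anticipate is the bijective correspondence between the subspace selections $\range(Y^{*(k)};X^{*(k)})$ used in Algorithm \ref{algo:ssnm_GE} and the single matrices $B^{(k)}\in\partial_B\cP_{(\gamma^{(k)})^{-1}q}(\cdot)$ used in Algorithm \ref{alg:gssnge}; this however is already nailed down by \Cref{lem:spq} together with the proof of \Cref{prop:equiv1}, which shows that the nonsingular transition matrix relating $(Y^{*(k)},X^{*(k)})$ to $(B^{(k)},\gamma^{(k)}(I-B^{(k)}))$ drops out of the normal equation \eqref{iteration}, leaving precisely the Newton system associated with $\cT_{u_{\gamma^{(k)}}}$. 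Consequently the proof reduces to citing \Cref{prop:equiv1} inductively and then invoking \Cref{u_gamma_Gss}.
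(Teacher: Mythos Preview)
Your proposal is correct and follows essentially the same approach as the paper: the paper's proof simply cites \Cref{prop:equiv1} and \Cref{u_gamma_Gss} and declares the conclusion immediate, while you have spelled out the induction and the correspondence between the subspace selections and the matrices $B^{(k)}$ that make those citations work.
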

\begin{proof}{Proof} 
The conclusion of the theorem follows immediately from \Cref{prop:equiv1} and \Cref{u_gamma_Gss}.
 \end{proof}

\section{Conclusions}
\label{secconlu}
This paper showed that the two typical implementable semismooth* Newton methods are applications of G-semismooth Newton methods. 
This further enriches the comprehension of G-semismooth Newton methods and helps design practical Newton-type methods for GEs.
Accordingly, a natural question is whether an implementable semismooth* Newton method is achievable for solving a GE that cannot be reformulated to locally Lipschitz continuous equations.
Moreover, the relationship between the generic semismooth* Newton methods and G-semismooth Newton methods is still unclear, so another question is whether one can obtain generalizations of G-semismooth Newton methods, involving certain tractable ``approximation steps'', that can solve a broader class of problems. 
We leave these questions for future research.

\bibliographystyle{nonumber}

\end{document}